\title[Strong symbolic dynamics for metric Anosov flows]{Strong symbolic dynamics for geodesic flows on CAT(-1) spaces and other metric Anosov flows}
\author{David Constantine}
\address{
Wesleyan University \\
Mathematics and Computer Science Department \\
Middletown, CT 06459}
\email{dconstantine@wesleyan.edu}
\author{Jean-Fran\c{c}ois Lafont}
\address{Department of Mathematics\\
                 Ohio State University\\
                 Columbus, Ohio 43210}
\email{jlafont@math.ohio-state.edu}
\author{Daniel J. Thompson}
\address{Department of Mathematics\\
                 Ohio State University\\
                 Columbus, Ohio 43210}
\email{thompson@math.osu.edu}
\date{\today}
\thanks{J.-F. L. was supported by NSF grants DMS-1510640 and DMS-1812028. D.T. was supported by NSF grant DMS-1461163. }
\newtheorem{thm}{Theorem}[section]
\newtheorem{thmx}{Theorem}
\newtheorem{corx}[thmx]{Corollary}
\newtheorem{lem}[thm]{Lemma}
\newtheorem{prop}[thm]{Proposition}
\theoremstyle{definition}
\newtheorem{defn}[thm]{Definition}
\numberwithin{equation}{section}
\def\Pb{\ifmmode{\Bbb P}\else{$\Bbb P$}\fi}
\def\Z{\ifmmode{\Bbb Z}\else{$\Bbb Z$}\fi}
\def\Q{\ifmmode{\Bbb Q}\else{$\Bbb Q$}\fi}
\def\C{\ifmmode{\Bbb C}\else{$\Bbb C$}\fi}
\def\R{\ifmmode{\Bbb R}\else{$\Bbb R$}\fi}
\def\H{\ifmmode{\Bbb H}\else{$\Bbb H\Bbb N$}\fi}
\def\diam{\operatorname{diam}}
\def\Susp{\operatorname{Susp}}
\def\DDD{\mathcal D}
\def\KKK{\mathcal K}
\def\BBB{\mathcal B}
\def\RRR{\mathcal R}
\def \SSS {\mathcal S}
\def \Proj{\operatorname{Proj}}
\def \Int{\operatorname{Int}}
\def\CAT{\operatorname{CAT}}
\newcommand{\oast}{\ast\kern-.5em{\circ}}
\newcommand{\subast}{\ast\kern-.55em{{}_{\smallsmile}}}
\newcommand{\lbrak}{[\kern-.16em{[}}
\newcommand{\rbrak}{]\kern-.17em{]}}
\begin{document}

\maketitle
\begin{abstract}
We prove that the geodesic flow on a locally $\CAT(-1)$ metric space which is compact, or more generally convex cocompact with non-elementary fundamental group, can be coded by a suspension flow over an irreducible shift of finite type with H\"older roof function. This is achieved by showing that the geodesic flow is a metric Anosov flow, and obtaining H\"older regularity of return times for a special class of geometrically constructed local cross-sections to the flow. We obtain a number of strong results on the dynamics of the flow with respect to equilibrium measures for H\"older potentials. In particular, we prove that the Bowen-Margulis measure is Bernoulli except for the exceptional case that all closed orbit periods are integer multiples of a common constant. We show that our techniques also extend to the geodesic flow associated to a projective Anosov representation \cite{bcls}, which verifies that the full power of symbolic dynamics is available in that setting.

\end{abstract}

\setcounter{secnumdepth}{2}

\setcounter{section}{0}

%

\section{Introduction}

A \emph{metric Anosov flow}, or \emph{Smale flow}, is a topological flow equipped with a continuous bracket operation which abstracts the local product structure of uniform hyperbolic flows. Examples of metric Anosov flows include Anosov flows, H\"older continuous suspension flows over shifts of finite type, and the flows associated to projective Anosov representations studied by Bridgeman, Canary, Labourie and Sambarino \cite{bcls, BC17}.  We say that a flow has a \emph{Markov coding} if there is a finite-to-one surjective semi-conjugacy  $\pi$ with  a suspension flow over a shift of finite type on a finite alphabet. However, for this symbolic description to be useful, it is also required that the roof function and the map $\pi$ can be taken to be H\"older.  For the purposes of this paper, we call this a \emph{strong Markov coding}. Pollicott showed that Bowen's construction of symbolic dynamics for basic sets of Axiom A flows can be extended to the metric Anosov setting \cite{pollicott} to provide a Markov coding. However, no criteria for obtaining a \emph{strong} Markov coding, which is necessary for most dynamical applications, were suggested.  Furthermore, we see no reason that every metric Anosov flow \emph{should} have a strong Markov coding, since questions of H\"older continuity seem to require additional structure on the space and the dynamics.  In this paper, we give a method for obtaining the strong Markov coding for some systems of interest via the metric Anosov flow machinery. 

Our primary motivation for this analysis is to gain a more complete dynamical picture for the geodesic flow on a compact (and, more generally, convex cocompact), locally $\CAT(-\kappa)$ metric space, where $\kappa>0$, which is a generalization of the geodesic flow on a closed (or convex cocompact) Riemannian manifold of negative curvature. In the closed Riemannian case, the geodesic flow is Anosov, so the system has a strong Markov coding by Bowen's results \cite{bowen-symbolic}. In the convex cocompact Riemannian case, the geodesic flow restricted to the non-wandering set is Axiom A, so Bowen's argument still applies. We show that this extends to the $\CAT(-\kappa)$ case. The majority of previous dynamical results in this area are based on analysis of the boundary at infinity via the Patterson--Sullivan construction. This has yielded many results for the Bowen-Margulis measure \cite{Roblin}, and also recently for a class of equilibrium states \cite{BPP16}. A weak form of symbolic dynamics for geodesic flows on $\CAT(-\kappa)$ spaces, or more generally on Gromov hyperbolic spaces, is due to Gromov \cite{gromov}, extending on an approach of Cannon \cite{Cannon}. Full details were provided by Coornaert and Papadopoulos \cite{cp}. This approach uses topological arguments to give an orbit semi-equivalence with a  suspension over a subshift of finite type.  A priori, orbit semi-equivalence is too weak a relationship to preserve any interesting dynamical properties \cite{GM, KT17}, and it is not known how to improve this construction of symbolic dynamics to a semi-conjugacy. In \cite{CLT}, we used this weak symbolic description to prove that these geodesic flows are expansive flows with the weak specification property, and explored the consequences of this characterization. However, neither the boundary at infinity techniques, nor techniques based on the specification property are known to produce finer dynamical results such as the Bernoulli property. Once the strong Markov coding is established, a treasure trove of results from the literature can be applied. We collect some of these results as applied to geodesic flow for $\CAT(-1)$ spaces as Corollary \ref{t.applications}. The Bernoulli property in particular is an application that is out of reach of the previous techniques available in this setting.

Our first step is to formulate verifiable criteria for a metric Anosov flow to admit a strong Markov coding. In the following statement, the \emph{pre-Markov proper families at scale $\alpha$}, which are formally introduced in Definition \ref{defn:pollicott}, are families of sections to the flow $\BBB=\{B_i\}$, $\DDD=\{D_i\}$, $B_i \subset D_i$  with finite cardinality and diameter less than $\alpha$ satisfying certain nice basic topological and dynamical properties. These families were originally introduced by Bowen and are the starting point for his construction of symbolic dynamics for flows.
\begin{thmx} \label{t.usefulversion}
Let $(\phi_t)$ be a H\"older continuous metric Anosov flow without fixed points. Suppose that there exists a pre-Markov proper family $(\BBB, \DDD)$  satisfying:
\begin{enumerate}
	\item the return time function $r(y)$ for $\BBB$ is H\"older where it is continuous (i.e., on each $B_i \cap H^{-1}(B_j)$ where $H$ is the Poincar\'e first-return map);

	\item the projection maps along the flow $\Proj_{B_{i}}: B_i \times [-\alpha, \alpha] \to B_i$ are H\"older, where $\alpha>0$ is the scale for the pre-Markov proper family. 
\end{enumerate}
Then the flow has a strong Markov coding. 
\end{thmx}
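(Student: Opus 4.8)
The plan is to follow Bowen's classical construction of symbolic dynamics for flows, as adapted to the metric Anosov setting by Pollicott, but to track H\"older regularity at every step, using hypotheses (1) and (2) to carry it through. First I would recall that Pollicott's argument, starting from a pre-Markov proper family $(\BBB, \DDD)$, produces a family of cross-sections $\RRR = \{R_1, \dots, R_N\}$ that are a Markov family: the $R_i$ are obtained from the $B_i$ by the usual cutting-and-intersecting procedure using the local product (bracket) structure, so that the Poincar\'e return map $H$ on $R = \bigcup R_i$ has a Markov partition, yielding a shift of finite type $(\Sigma, \sigma)$ with transition matrix recording which $R_i$ flow to which $R_j$, a roof function $\tilde r$ equal to the first-return time, and a semi-conjugacy from the suspension $\Susp(\Sigma, \tilde r)$ onto the flow. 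The topological part of this — that the coding is finite-to-one and surjective — is exactly Pollicott's theorem and I would invoke it directly. What remains is to upgrade ``continuous'' to ``H\"older'' for both the roof function and the coding map.

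The key steps, in order, are: (i) show the bracket operation $[\cdot,\cdot]$ is H\"older — this should follow from H\"older continuity of the flow together with hypothesis (2), since the bracket is defined by intersecting local strong-stable and strong-unstable sets, which are themselves built from the flow and the projections $\Proj_{B_i}$; (ii) deduce that the stable/unstable ``holonomy'' maps between nearby sections, and hence the transition maps used to define the refined sections $R_i$, are H\"older; (iii) conclude that the return-time function $\tilde r$ on $\RRR$ is H\"older where continuous, by writing it in terms of the original return time $r$ (H\"older by hypothesis (1)) composed with these H\"older holonomies and the H\"older projections along the flow; (iv) show that the coding map $\pi \colon \Sigma \to R$ (and thence the semi-conjugacy on the suspension) is H\"older, using the standard estimate that points agreeing on a long central block in $\Sigma$ have forward/backward images that stay close for a long time, together with the exponential contraction/expansion of the bracket along stable/unstable directions — this contraction is part of the metric Anosov structure, and combined with step (i) gives the geometric decay of $d(\pi x, \pi y)$ needed for a H\"older bound. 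Finally, I would package $\tilde r$ and $\pi$ into the statement that the flow has a strong Markov coding, noting the roof function can be taken strictly positive and the alphabet finite as in Pollicott's construction.

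The main obstacle I expect is step (i)–(ii): controlling the regularity of the bracket and of the stable/unstable holonomies purely from the abstract metric Anosov axioms plus hypotheses (1) and (2). In the smooth Anosov case one has a genuine hyperbolic splitting and the holonomies are H\"older by well-understood arguments; here the ``splitting'' is only encoded in the continuous bracket, so one must check that the way the refined sections $R_i$ are assembled — as intersections of flowed-forward and flowed-backward pieces of the $B_i$ and $D_i$ — only ever uses the bracket, the flow, and the projections $\Proj_{B_i}$, all of which are now H\"older, and that finite compositions and intersections of H\"older maps transverse to the flow remain H\"older with a uniform exponent (uniformity being available since $\RRR$ is finite). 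Once that is in hand, steps (iii) and (iv) are essentially bookkeeping: (iii) is a finite composition of H\"older maps, and (iv) is the standard coding estimate, where the only input beyond softness is the exponential shadowing built into the metric Anosov definition. I would also need to check the minor point that the shift of finite type can be taken irreducible on each component, but this is immediate from restricting to the non-wandering set, which is all of the space here since the flow is (by the standing assumptions feeding into the pre-Markov proper family) chain recurrent.
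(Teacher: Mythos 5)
Your proposal diverges from the paper's proof at a crucial point, and the divergence introduces a genuine gap.

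Your steps (i)--(ii) propose to establish H\"older continuity of the bracket operation and of the stable/unstable holonomies, then to build the regularity of the refined Markov family $\RRR$ and of the coding map out of these. You flag this yourself as ``the main obstacle,'' and rightly so: hypothesis (2) only controls the projection \emph{along the flow direction} onto a fixed section $B_i$. It says nothing about regularity transverse to the flow. The bracket $\langle x,y\rangle$ is obtained by intersecting a strong-unstable set through (a flowed version of) $x$ with a strong-stable set through $y$, and there is no obvious way to deduce H\"older regularity of this intersection point from H\"older continuity of the flow together with H\"older projections along the flow. In fact the paper explicitly disclaims this route: the authors state they ``do not use or prove H\"older continuity of the local product structure.'' So as written, step (i) is an unjustified claim and the argument does not close.

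The paper instead observes that the heavy lifting is unnecessary, because of a structural feature of Bowen's and Pollicott's refinement step (the paper's Lemma~\ref{PtoB}): the rectangles $R_i$ in the Markov proper family $\RRR$ are simply \emph{subsets} of the original $B_j$'s, shifted along the flow by small \emph{constants} $u_j$. Consequently the return-time function for $\RRR$ is, on each branch of continuity, a restriction of the return-time function for $\BBB$ up to the addition of constants, and it inherits H\"older regularity from hypothesis (1) directly --- no holonomy or bracket analysis at all. For H\"older regularity of the coding map $\pi\colon \Sigma(\RRR)\to \bigcup R_i$, the paper uses the exponential-shadowing lemma (stated as Theorem~\ref{expclose}, Bowen's \cite[Lemma 1.5]{bowen-symbolic}, valid for metric Anosov flows with no extra regularity hypothesis): if two orbits pass through the same block of rectangles $R_{i_{-n}},\dots,R_{i_n}$, they stay uniformly close for time on the order of $n$, hence there is a small time shift $v$ with $d(x,\phi_v y)\lesssim e^{-\lambda' n}$; then the H\"older projection $\Proj_{R_{i_0}}$ from hypothesis (2) kills the shift $v$ and turns this into a bound on $d(x,y)$, which is exactly the H\"older estimate for $\pi$. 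This is the same \emph{flavor} as your step (iv), but the input is the shadowing lemma plus hypothesis (2), not any regularity of the bracket. Your step (iv) would work if phrased this way; your steps (i)--(iii) should simply be deleted and replaced by the observation about Lemma~\ref{PtoB}. (One further minor point: your closing remark about irreducibility uses transitivity, which is not among the hypotheses of Theorem~\ref{t.usefulversion}; irreducibility is a separate remark, valid under the additional assumption of transitivity, and is not part of the statement being proved.)
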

Metric Anosov flows are expansive (see \S \ref{sec:smale}), which implies that their fixed points are a finite set of isolated points, and can thus be removed. Metric Anosov flows satisfy Smale's spectral decomposition theorem \cite{pollicott}. That is, the non-wandering set for $(\phi_t)$ decomposes into finitely many disjoint closed invariant sets on each of which the flow is transitive. In particular, if $(\phi_t)$ is a transitive metric Anosov flow, there are no fixed points and the shift of finite type in the strong Markov coding is irreducible. We verify the criteria of Theorem \ref{t.usefulversion} in our setting, obtaining the following application, which is our main result.

\begin{thmx} \label{t.main}
The geodesic flow for a compact  locally $\CAT(-\kappa)$ space with non-elementary fundamental group, $\kappa>0$, has an irreducible strong Markov coding. 
\end{thmx}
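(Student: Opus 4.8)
The plan is to deduce Theorem \ref{t.main} from Theorem \ref{t.usefulversion}: realize the geodesic flow as a H\"older continuous metric Anosov flow without fixed points, exhibit a geometrically natural pre-Markov proper family, and check conditions (1) and (2). First a harmless reduction: scaling the metric on $X$ by $\sqrt{\kappa}$ turns a locally $\CAT(-\kappa)$ space into a locally $\CAT(-1)$ space and only reparametrizes the geodesic flow by a constant, so none of the relevant conclusions change and we may assume $\kappa=1$. Let $\tilde X$ be the universal cover, a proper $\CAT(-1)$ space on which $\Gamma=\pi_1(X)$ acts properly discontinuously and cocompactly by isometries, and fix a basepoint $x_0$ and the associated Bourdon visual metric on $\partial\tilde X$. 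Writing $\partial^2\tilde X := (\partial\tilde X\times\partial\tilde X)\setminus\Delta$, the space of unit-speed parametrized geodesic lines $G\tilde X$ is identified with $\partial^2\tilde X\times\mathbb{R}$ by recording the ordered pair of endpoints together with a time normalization (e.g.\ the arclength parameter of the point nearest $x_0$), and $GX=\Gamma\backslash G\tilde X$ carries the geodesic flow $\phi_t$ given by shifting the parameter. Because $\Gamma$ is non-elementary and cocompact, $\partial\tilde X$ is a perfect compact set equal to the limit set, $GX$ is compact, $\phi_t$ has no fixed points (a unit-speed geodesic line cannot be periodic), and $\phi_t$ is topologically transitive, as already observed in \cite{CLT}.

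Next I would verify that $\phi_t$ is a H\"older continuous metric Anosov flow. The local product structure comes from the strong stable and strong unstable sets of the flow, which in boundary coordinates are the sets fixing the forward, respectively backward, endpoint; the bracket of nearby $v=(\xi^-,\xi^+,s)$ and $w=(\eta^-,\eta^+,t)$ is then the geodesic with backward endpoint $\xi^-$ and forward endpoint $\eta^+$, normalized in time to lie in the local strong stable set of $w$ and the local strong unstable set of $v$. This is $\Gamma$-equivariant, hence descends to $GX$. The defining contraction estimates — geodesics sharing a forward (resp.\ backward) endpoint converge exponentially in forward (resp.\ backward) time — are precisely the content of the $\CAT(-1)$ comparison inequalities with $\mathbb{H}^2$, and the same comparison geometry shows that the identification $G\tilde X\cong\partial^2\tilde X\times\mathbb{R}$ is bi-H\"older for the visual metric, so that the flow and the bracket are H\"older. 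This is the step where the strict bound $-\kappa<0$, rather than merely nonpositive curvature, is essential.

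Finally I would build the pre-Markov proper family $(\mathcal{B},\mathcal{D})$ at a small scale $\alpha$ from geometrically defined cross-sections. Near finitely many reference geodesics, take $D_i$, and $B_i\subset D_i$, to consist of geodesics with prescribed endpoint neighborhoods $U_i^-\times U_i^+\subset\partial^2\tilde X$, timed by a Busemann-type normalization based at a point $p_i$ on the reference geodesic; in boundary coordinates each section is the graph over $U_i^-\times U_i^+$ of a H\"older function. One then checks the topological and dynamical axioms of a pre-Markov proper family (Definition \ref{defn:pollicott}) exactly as in Pollicott's adaptation of Bowen's construction \cite{pollicott}. Condition (2) of Theorem \ref{t.usefulversion}, H\"older regularity of the flow-projections $\Proj_{B_i}$, reduces to H\"older regularity of these defining functions, i.e.\ to H\"older regularity of Busemann functions and Gromov products on a $\CAT(-1)$ space, which is classical (Bourdon). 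Condition (1), H\"older regularity of the first-return time $r$ on each piece $B_i\cap H^{-1}(B_j)$ where it is continuous, follows similarly: on such a piece the return is implemented by a fixed deck transformation $g$, and $r$ equals, up to a bounded H\"older correction, a Busemann cocycle evaluated along the geodesic, hence is H\"older on the boundary. Theorem \ref{t.usefulversion} then yields a strong Markov coding, and combining this with transitivity of $\phi_t$ and the spectral-decomposition remark following Theorem \ref{t.usefulversion} upgrades it to an \emph{irreducible} strong Markov coding.

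The principal obstacle is the uniform quantitative geometry underlying the middle two steps: one must control the divergence of geodesics and the regularity of Busemann functions, Gromov products, and visual metrics precisely enough to extract H\"older — rather than merely continuous — moduli for the flow, the bracket, the return times, and the flow-projections simultaneously. This is exactly the point at which the $\CAT(-1)$ hypothesis, as opposed to Gromov hyperbolicity or $\CAT(0)$, is indispensable, since it is what converts the soft topological convergence of geodesic rays into the exponential, and hence H\"older, estimates required by Theorem \ref{t.usefulversion}.
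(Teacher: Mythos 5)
Your proposal follows essentially the same strategy as the paper: rescale to $\CAT(-1)$, verify the geodesic flow is a H\"older metric Anosov flow with the bracket defined through boundary endpoints and Busemann normalization, construct sections as sets of geodesics with prescribed endpoint neighborhoods timed by a Busemann function, establish H\"older regularity of return times and projections from the $1$-Lipschitz property of Busemann functions, and invoke Theorem~\ref{t.usefulversion} together with transitivity. The one place where you understate the work is the existence of a pre-Markov proper family built from these geometric sections: the paper treats this as a non-trivial step (Proposition~\ref{prop:good mpf}, via a refine-and-perturb argument in Lemma~\ref{markovpf}) precisely because the pre-Markov condition~\eqref{premarkov} does not come for free from an arbitrary finite cover by rectangles, and because the perturbations must be by small flow-shifts to keep the sections geometrically defined; "one then checks... exactly as in Pollicott" would need this to be filled in.
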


We actually prove Theorem \ref{t.main} in the more general setting of convex cocompact locally $\CAT(-\kappa)$ spaces, with the geodesic flow restricted to the non-wandering set. We prove Theorem \ref{t.main} by giving a geometric construction of a `special' pre-Markov proper family $(\BBB, \DDD)$ for the geodesic flow. The sections are defined in terms of Busemann functions, which are well known to be Lipschitz. We then use the regularity of the Busemann functions to establish (1) and (2) of Theorem \ref{t.usefulversion} for the family $(\BBB, \DDD)$, thus establishing Theorem \ref{t.main}. 

Our second main application is to use similar techniques to study the flow associated to a projective Anosov representation, which is another important example of a metric Anosov flow. Again, the key issue is establishing the regularity properties (1) and (2) of Theorem \ref{t.usefulversion}.  We achieve this using similar ideas to the proof of Theorem \ref{t.main}, although there are some additional technicalities since we must find machinery to stand in for the Busemann functions.

\begin{thmx} \label{t.main2}
The geodesic flow for a projective Anosov representation $\rho: \Gamma \to \mathsf{SL}_m(\mathbb{R})$,  where $\Gamma$ is a hyperbolic group, admits a strong Markov coding.
\end{thmx}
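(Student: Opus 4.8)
The plan is to derive Theorem \ref{t.main2} from Theorem \ref{t.usefulversion}, following the strategy of the proof of Theorem \ref{t.main} but replacing the Busemann functions of the $\CAT(-\kappa)$ setting by functions built directly from the limit maps of the representation. \emph{Step 1: set up the flow and reduce to Theorem \ref{t.usefulversion}.} Recall that the geodesic flow $(\phi_t)$ of $\rho$ is defined on a compact model of $(\partial^{(2)}\Gamma\times\R)/\Gamma$ as a H\"older time-change of the Gromov geodesic flow of $\Gamma$, where the reparametrization is the positive H\"older function associated to the H\"older additive cocycle $c_\rho$ over the $\Gamma$-action on $\partial\Gamma$; the latter is built from the transverse H\"older limit maps $\xi\colon\partial\Gamma\to\mathbb{P}(\R^m)$ and $\theta\colon\partial\Gamma\to\mathbb{P}((\R^m)^*)$, and its periods are $\log$ of the spectral radii $\lambda_1(\rho(\gamma))$. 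By \cite{bcls, BC17}, $(\phi_t)$ is a H\"older continuous metric Anosov flow, and it has no fixed points, since the Gromov geodesic flow has none and the time-change is by a positive continuous function (equivalently, one excises the finite fixed-point set guaranteed by expansivity). Hence it suffices to produce a pre-Markov proper family $(\BBB,\DDD)$, at some scale $\alpha>0$, satisfying conditions (1) and (2) of Theorem \ref{t.usefulversion}.

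\emph{Step 2: construct the special sections.} The object replacing the Busemann function $b_\xi$ is a Busemann-type cocycle $\beta_\rho$ attached to $c_\rho$: for $x\in\partial\Gamma$ it records the difference of the reparametrization between two points of the geodesic with forward endpoint $x$, and can be written as a difference of logarithms of norms of vectors in the lines $\xi(\cdot)$ paired with functionals in $\theta(\cdot)$. What is needed from $\beta_\rho$ is (i) H\"older regularity, which follows from H\"older regularity of $\xi$, $\theta$, and $c_\rho$ established in \cite{bcls}, and (ii) uniform non-degeneracy in the flow direction, i.e.\ strict monotonicity along orbits with derivative bounded away from $0$ and $\infty$, which holds because the time-change function is positive and bounded below. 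Using $\beta_\rho$ in place of $b_\xi$, I would define the local cross-sections $D_i\supset B_i$ by freezing a value of $\beta_\rho$ relative to a reference point on a reference orbit, copying the geometric construction behind Theorem \ref{t.main}; the soft axioms of a pre-Markov proper family (finite cardinality, small diameter, the covering and disjointness conditions, and the basic dynamical compatibility) are checked as in the $\CAT(-\kappa)$ case, ultimately as in Bowen--Pollicott.

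\emph{Step 3: verify (1) and (2), and the main obstacle.} The return time for $\BBB$ between consecutive hits is a difference of values of $\beta_\rho$ along an orbit, so its H\"older regularity is inherited from that of $\beta_\rho$ together with H\"older regularity of the maps sending a pair of boundary points to the relevant normalization; the quantitative control here comes from uniform transversality of $\xi$ and $\theta$, which is exactly where the $\CAT(-\kappa)$ argument used the absence of tangencies in $\partial^{(2)} X$, and which follows in the present setting from the exponential domination of singular values in the projective Anosov condition. Condition (2) then follows by an implicit-function argument: projecting along the flow onto $B_i$ amounts to solving $\beta_\rho(\cdot)=\mathrm{const}$ for the flow time, a H\"older operation precisely because $\beta_\rho$ is H\"older and uniformly transverse to the sections. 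The main obstacle is Step 2: with no space on which $\Gamma$ acts by isometries and no genuine Busemann function available, one must construct $\beta_\rho$ from the representation and establish its H\"older and non-degeneracy estimates directly from the inequality $\sigma_1(\rho(\gamma))/\sigma_2(\rho(\gamma))\to\infty$; this substitutes for, and is the technical heart analogous to, the Busemann-function estimates underlying Theorem \ref{t.main}.
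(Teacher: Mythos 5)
Your proposal is correct and follows essentially the same route as the paper: the paper also reduces to Theorem \ref{t.usefulversion} by building sections as level sets of a Busemann-type function $\beta_{Z_0}(Z)=-\log\langle v_0\mid u\rangle$ constructed from the pairing of the limit maps, and deriving regularity of return times and projections from regularity of that function. The only notable simplification in the paper is that it works directly on the compact model $\mathsf{U}_\rho\Gamma=F_\rho/\Gamma$, where the flow is linear in the fiber coordinates $(u,v)\mapsto(e^tu,e^{-t}v)$; this lets one bypass the reparametrized Gromov flow and obtain Lipschitz (rather than merely H\"older) bounds for the return time and projection maps with little work.
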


This provides a clean self-contained reference for a key step in the paper \cite{bcls}, addressing an issue in how their statement was justified. We emphasize that this issue can be sidestepped in their examples of interest, and thus does not impact their results in a central way. We explain this here.

In \cite{bcls}, the statement of Theorem \ref{t.main2} is justified by showing that the flow is metric Anosov  \cite[Proposition 5.1]{bcls} and then referencing \cite{pollicott} as saying that this implies the existence of strong Markov coding. This claim also appears in the papers \cite{BC17, BCLS18, PS17, aS16} either explicitly or implicitly through the claim that results that are true for Anosov flows are true for metric Anosov flows via \cite{pollicott}.  However, \cite[Theorem 1]{pollicott} only provides a Markov coding with no guarantee of regularity of the roof function or projection map beyond continuity. When the phase space of the geodesic flow of the representation is a closed manifold, for example in the important case of Hitchin representations, the required regularity can be observed easily from smoothness of the flow and by taking smooth discs for sections in the construction of the symbolic dynamics, as Bowen  argued in the Axiom A case. The case of deformation spaces of convex cocompact hyperbolic manifolds is also unproblematic due to the Axiom A structure. 

If $\Gamma$ is not the fundamental group of a convex cocompact negatively curved manifold, we stress that the phase space of the flow need not be a manifold, so new arguments for regularity are needed. In the context of \cite{bcls}, this is only a minor issue given that they show that the flow is H\"older and demonstrate H\"older continuity of the local product structure. Sections with H\"older return maps can likely be constructed based on these facts. However, to carry this out and incorporate it into Pollicott's symbolic dynamics construction needs rigorous justification. Our proof of Theorem \ref{t.main2}  realizes this general philosophy (we do not use or prove H\"older continuity of the local product structure, but our arguments have a similar flavor), and our argument gives a convenient framework and self-contained reference for the regularity of the Markov coding.

The existence of a strong  Markov coding allows one to instantly apply the rich array of results on dynamical and statistical properties from the literature that are proved for the suspension flow, and known to be preserved by the projection $\pi$. We collect some of these results as they apply to our primary example of the geodesic flow for a compact, or convex cocompact, locally $\CAT(-\kappa)$ space.

\begin{corx}\label{t.applications}
Consider the geodesic flow on a compact, or convex cocompact with non-elementary fundamental group, locally $\CAT(-\kappa)$ space, and let $\varphi$  be a H\"older potential function on the space of geodesics (resp. non-wandering geodesics in the convex cocompact case).  Then there exists a unique equilibrium measure $\mu_\varphi$, and it has the following properties.
\begin{enumerate}
\item $\mu_\varphi$ satisfies the Almost Sure Invariance Principle, the Law of the Iterated Logarithm, and the Central Limit Theorem;
	\item The dynamical zeta function is analytic on the region of the complex plane with real part greater than $h$, where $h$ is the entropy of the flow, and has a meromorphic extension to points with  real part greater than $h- \epsilon$.
	\item If the lengths of periodic orbits are not all integer multiples of a single constant then the system is Bernoulli with respect to $\mu_\varphi$;
	\item If the lengths of periodic orbits are all integer multiples of a single constant and the space is compact and geodesically complete, then $\mu_\varphi$ is the product of Lebesgue measure for an interval with a Gibbs measure for an irreducible shift of finite type; the measure in the base is thus Bernoulli if the shift is aperiodic, or Bernoulli times finite rotation otherwise.
	
\end{enumerate}
\end{corx}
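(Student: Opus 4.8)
The plan is to deduce all four items from Theorem~\ref{t.main}: once the geodesic flow is realized as a factor of a suspension flow via the strong Markov coding, each item is a known consequence of the thermodynamic formalism for suspension flows over shifts of finite type, and the real work is in setting up the transfer through the coding map. In detail, Theorem~\ref{t.main} (in its convex cocompact form when the space is not compact) presents the geodesic flow $(\phi_t)$ --- on the full space of geodesics in the compact case, on the non-wandering set in the convex cocompact case --- as the image under a strong Markov coding $\pi$ of a suspension flow $(\Sigma_r,\rho_t)$, where $(\Sigma,\sigma)$ is an irreducible shift of finite type on a finite alphabet, $r\colon\Sigma\to(0,\infty)$ is H\"older, and $\pi$ is a finite-to-one, surjective, H\"older semiconjugacy. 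A H\"older potential $\varphi$ on the space of geodesics then pulls back to a H\"older potential $\widetilde\varphi:=\varphi\circ\pi$ on $\Sigma_r$.

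For the transfer, invoke the standard thermodynamic formalism for suspension flows (Abramov's formula, Bowen's pressure formula, and Bowen--Ruelle equilibrium states): integrating $\widetilde\varphi$ up the flow direction yields a H\"older potential on the base $(\Sigma,\sigma)$, which, being an irreducible SFT, has a unique equilibrium state; hence $\widetilde\varphi$ has a unique equilibrium state $\widetilde\mu_\varphi$ on $\Sigma_r$, the suspension of the base state normalized by the $r$-integral. Since $\pi$ is a finite-to-one factor map, the relative entropy over $\pi$ vanishes, so $\pi$ preserves entropy and pressure, $P_\phi(\varphi)=P_\rho(\widetilde\varphi)$, every $\phi_t$-invariant measure lifts to a $\rho_t$-invariant measure with equal entropy and potential integral, and equilibrium states correspond under $\pi_*$. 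Thus $\mu_\varphi:=\pi_*\widetilde\mu_\varphi$ is the unique equilibrium state for $\varphi$, which is the first assertion. For items (1) and (2) this factor relation is all that is needed; for items (3) and (4), where Bernoullicity and a product decomposition are at stake and these are measure-isomorphism invariants rather than factor invariants, one needs the sharper fact that $\pi$ is a measure-theoretic isomorphism of $(\Sigma_r,\widetilde\mu_\varphi,\rho_t)$ with $(\phi_t,\mu_\varphi)$. This follows by Bowen's argument: the set of points with more than one $\pi$-preimage is carried on the $\rho_t$-saturation of the boundaries of the finitely many Markov sections underlying the coding, and this set is $\widetilde\mu_\varphi$-null by the Gibbs property on the irreducible base.

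Then I quote the model results and pull them back. \textbf{(1)} For equilibrium states of suspension flows over irreducible SFTs with H\"older roof, H\"older observables satisfy the ASIP, the law of the iterated logarithm, and the CLT (standard; see, e.g., the work of Melbourne--T\"or\"ok and of Denker--Philipp). An observable on the space of geodesics pulls back by the H\"older map $\pi$, its integral along $\phi_t$ equals that of its pullback along $\rho_t$ at any preimage, and $\pi_*\widetilde\mu_\varphi=\mu_\varphi$, so the laws descend. \textbf{(2)} The dynamical zeta function of such a suspension flow is analytic on $\{\operatorname{Re}s>h\}$ and extends meromorphically to $\{\operatorname{Re}s>h-\epsilon\}$, by the transfer-operator analysis of Parry and Pollicott; the prime closed orbits of $(\phi_t)$ and $(\rho_t)$ are in period-preserving bijection apart from finitely many exceptional orbits meeting the section boundaries, which does not affect the analytic statement. \textbf{(3)--(4)} By the Liv\v sic theorem, the periods of all closed orbits of $(\phi_t)$ --- equivalently, the Birkhoff sums of $r$ over $\sigma$-periodic points --- lie in $c\Z$ for some $c>0$ if and only if $r$ is H\"older-cohomologous to a function valued in $c\Z$. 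If no such $c$ exists, the suspension flow is mixing for $\widetilde\mu_\varphi$ (by the standard criterion that a suspension flow over an SFT fails to mix exactly when the roof is cohomologous into a discrete subgroup of $\R$), hence Bernoulli by Ratner's theorem, so $\mu_\varphi$ is Bernoulli via the isomorphism; this is (3). If such a $c$ exists, conjugating $r$ to a locally constant $c\Z$-valued function and passing to the associated tower presents $(\Sigma_r,\rho_t)$ as a constant-roof-$c$ suspension over an irreducible SFT $(\widehat\Sigma,\widehat\sigma)$, i.e.\ as $[0,c)\times\widehat\Sigma$ with the flow translating the interval factor and the equilibrium state equal to Lebesgue on $[0,c)$ times the Gibbs state of the induced H\"older potential on $\widehat\Sigma$; here the compact, geodesically complete hypothesis rules out degeneracies that would obstruct this product picture from descending faithfully under $\pi$. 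By the Friedman--Ornstein theorem the base Gibbs state is Bernoulli when $\widehat\sigma$ is aperiodic, and Bernoulli times the finite cyclic rotation coming from the period decomposition otherwise; this is (4).

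The step I expect to be the main obstacle is the measure-isomorphism refinement in the second paragraph: once the strong Markov coding is known to be a metric isomorphism for the equilibrium state, the rest is a translation of established suspension-flow results. A secondary point needing care is the Liv\v sic translation of the arithmeticity hypothesis, and, in (4), checking that the tower's product structure genuinely descends through the merely finite-to-one map $\pi$ --- which is exactly where the compact, geodesically complete hypothesis of (4) enters.
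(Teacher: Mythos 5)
Your treatment of the uniqueness of $\mu_\varphi$ and of items (1)--(3) follows essentially the same route as the paper, with the same references (Bowen--Ruelle, Melbourne--T\"or\"ok, Denker--Philipp, Parry--Pollicott, Ratner). The explicit Bowen-style argument you give for why $\hat\pi$ is a measure isomorphism with respect to the Gibbs class is correct, and makes more explicit something the paper handles by citation to \cite{pollicott}.

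For item (4) you diverge from the paper, and this is where a real gap appears. The paper's argument is geometric: it invokes Ricks' structure theorem \cite[Theorem 4]{ricks}, which says that for a proper, geodesically complete, cocompact $\CAT(0)$ space, the arithmeticity hypothesis forces $\tilde X$ to be a metric tree with edges of length $c$; the geodesic flow on $X$ is then \emph{explicitly} a constant-roof-$c$ suspension over the adjacency SFT of the graph, and the product structure of the equilibrium state is immediate. You instead argue purely symbolically --- Liv\v sic cohomology to bring the roof into $c\Z$, a tower construction to pass to a constant-roof suspension over a new SFT, and then transfer through the measure isomorphism. The problem is that nothing in that chain of reasoning uses the compact, geodesically complete hypothesis; your claim that this hypothesis ``rules out degeneracies that would obstruct this product picture from descending faithfully under $\pi$'' is an assertion without content. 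If your symbolic argument were complete, it would prove (4) without the geodesic completeness hypothesis at all, which should immediately make you suspicious. Either you should isolate exactly where your scheme fails absent that hypothesis (perhaps in controlling the period spectrum seen through the finite-to-one coding, or in the regularity bookkeeping when you conjugate the roof and form the tower), or you should recognize that the paper's route via Ricks is a genuinely different mechanism: there the geodesic completeness hypothesis is present precisely because it is a hypothesis of Ricks' theorem, and the payoff is a concrete conjugacy rather than an abstract measure isomorphism. As written, your proposal for (4) is not a proof.
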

The equilibrium measure for $\varphi=0$ is the measure of maximal entropy, which is known in this setting as the  \emph{Bowen-Margulis measure} $\mu_{BM}$. While items (1), (2), and (3) are true for any topologically transitive system with a strong Markov coding, item (4) additionally uses a structure theorem of Ricks in \cite{ricks}, which applies for geodesic flow on compact, geodesically complete locally $\CAT(0)$ spaces. Finally, we note that in our previous work \cite{CLT}, in the compact case, we used a different approach based on the specification property to show that there is a unique equilibrium measure $\mu_\varphi$.  However, those techniques do not give the strong consequences listed above.

The paper is structured as follows. In \S \ref{sec:background}, we establish our definitions and preliminary lemmas. In \S \ref{sec:Markov}, we establish the machinery required to build a strong Markov coding for a metric Anosov flow, and prove Theorem \ref{t.usefulversion}. In \S\ref{sec:good rectangles}, we study geometrically defined sections to the flow, completing the proof of Theorem \ref{t.main}. In \S \ref{sec:twoexamples}, we extend the construction to projective Anosov representations, proving Theorem \ref{t.main2}. In \S \ref{s.applications}, we discuss applications of the strong Markov coding, proving Corollary \ref{t.applications}.

%

\section{Preliminaries}\label{sec:background}

\subsection{Flows and sections} 

We consider a continuous flow $(\phi_t)_{t \in \mathbb R}$ with no fixed points on a compact metric space $(X, d)$.  For a set $D$, and interval $I$, we write
\[ \phi_I D = \{\phi_t x : x \in D, \ t \in I\}. \]

We say that a flow $(\phi_t)$ is \emph{H\"older continuous} if the map from $X \times [0,1] \to X$ given by $(x,t) \to \phi_t(x)$ is H\"older continuous. It follows that every time$-t$ map is H\"older continuous, and the map $t \to \phi_t(x)$ is H\"older continuous for each $x\in X$.

\begin{defn}\label{defn:section}
For a continuous flow $( \phi_t )$ on a metric space $(X,d)$, a \emph{section} is a closed subset $D\subset X$ and a $\xi>0$ so that the map $(z,t) \mapsto \phi_tz$ is a homeomorphism between $D \times [-\xi, \xi] \to X$ and $\phi_{[-\xi,\xi]}D$.
\end{defn}

For a section $D \subset X$,  we write $\Int_\phi  D$ for the interior of $D$ \emph{transverse to the flow}; that is,
\[\Int_\phi D = D \cap \bigcap_{\epsilon>0}(\phi_{(-\epsilon,\epsilon)}D)^\circ\]
where $Y^\circ$ denotes the interior of $Y$ with respect to the topology of $X$.

For any section $D$, there is a well-defined projection map $\Proj_D: \phi_{[-\xi,\xi]}D \to D$ defined by $\Proj_D(\phi_tz) =z$. By definition, the domain of this map contains a nonempty open neighborhood of $X$. In \cite{BW}, a set $D \subset X$ is defined to be a section if $D$ is closed and there exists $\xi>0$ so that $D \cap \phi_{[-\xi,\xi]}x = \{x\}$ for all $x \in D$. It is easily checked that this is equivalent to Definition \ref{defn:section},  see \cite[\S 5]{BW}.

%

\subsection{Shifts of finite type and suspension flow}

Let $\mathcal{A}$ be any finite set. The full, two-sided shift on the alphabet $\mathcal{A}$ is the dynamical system $(\Sigma, \sigma)$ where
\[ \Sigma = \{ \underline x: \mathbb{Z} \to \mathcal{A} \} \ \mbox{ and } \ (\sigma \underline x)_n = \underline x_{n+1}. \]
We equip $\Sigma$ with the metric
\[ d(\underline x, \underline y) := \frac{1}{2^l} \ \mbox{ where } l = \min\{|n|:\underline x_n \neq \underline y_n\}. \]

A \emph{subshift} $Y$ of the full shift is any closed, $\sigma$-invariant subset of $\Sigma$, equipped with the dynamics induced by $\sigma$. We say that $(Y, \sigma)$ is a symbolic system. Given a $\{0,1\}$-valued $d \times d $ transition matrix $A$, where $d$ is the cardinality of $\mathcal{A}$, a ($1$-step) subshift of finite type is defined by
\[
\Sigma_A = \{\underline x \in \Sigma : A_{x_n x_{n+1}}=1 \mbox{ for all } n\in \mathbb{Z}\}.
\]
This is the class of symbolic spaces that appears in this paper. We now recall the suspension flow construction.

\begin{defn}
Given a symbolic system $(Y, \sigma)$ and a positive function $\rho: Y \to (0,\infty)$, we let 
\[
Y^{\rho} = \{ (\underline x,t): \underline x\in Y, 0\leq t \leq \rho(\underline x) \}/((\underline x, \rho(\underline x))\sim(\sigma \underline x,0))\]
and we define the suspension flow locally by $\phi_s(\underline x, t) = (\underline x, t+s).$ This is the \emph{suspension flow over $(Y, \sigma)$ with roof function $\rho$}. We denote the flow $(Y^{\rho}, (\phi_s))$ by $\Susp(Y, \rho)$.
\end{defn}

%

\subsection{$\CAT(-1)$ spaces and geodesic flow}\label{subsec:CAT (-1)}

In this paper, a \emph{geodesic} is defined to be a local isometry from $\mathbb{R}$ to a metric space. Thus, by our definition, a geodesic is parametrized and oriented.

\begin{defn}
In any metric space $(Y,d_Y)$, the \emph{space of geodesics} is
\[ GY:= \{ c:\mathbb{R} \to Y \mbox{ where $c$ is a local isometry} \}.\]
The \emph{geodesic flow} $(g_t)$ on $GY$ is given for $t \in \mathbb R$ by $g_tc(s) = c(s+t)$. 
\end{defn}

A \emph{geodesic metric space} is a metric space in which the distance between any pair of points can be realized by the length of a geodesic segment connecting them. Given a geodesic metric space $(\tilde X,d_{\tilde X})$, and points $x,y,z \in X$, we can form a geodesic triangle $\Delta(x,y,z)$ in $\tilde X$ and a comparison geodesic triangle $\Delta(\bar x, \bar y, \bar z)$ with the same side lengths in $\mathbb{H}^2$. A point $p \in \Delta(x,y,z)$ determines a comparison point $\bar p \in \Delta(\bar x, \bar y, \bar z)$ which lies along the corresponding side of $\Delta(\bar x, \bar y, \bar z)$ at the same distance from the endpoints of that side as $p$. We say that the space $\tilde X$ is $\CAT(-1)$ if for all geodesic triangles $\Delta(x,y,z)$ in $\tilde X$ and all $p,q \in \Delta(x,y,z)$, we have $d_{\tilde X}(p,q)\leq d_{\mathbb{H}^2}(\bar p, \bar q)$. That is, a space is $\CAT(-1)$ if its geodesic triangles are thinner than corresponding triangles in the model space of curvature $-1$. All $\CAT(-1)$ spaces are contractible, see e.g. \cite[Ch.3, Props 28 \& 29]{troyanov}. A $\CAT(-\kappa)$ space is defined analogously: its geodesic triangles are thinner than corresponding triangles in the model space of curvature $-\kappa$. A $\CAT(- \kappa)$ space can be rescaled homothetically to a $\CAT(-1)$ space. Thus, it suffices to consider $\CAT(-1)$ spaces.

In this paper, $(\tilde X, d_{\tilde X})$ will be a $\CAT(-1)$ space, $\Gamma$ will be a discrete group of isometries of $\tilde X$ acting freely and properly discontinuously, and $X=\tilde X/\Gamma$ will be the resulting quotient.  A space $(X,d_X)$ is \emph{locally} $\CAT(-1)$ if every point has a $\CAT(-1)$ neighborhood, and it is easily checked that $X=\tilde X/\Gamma$ satisfies this property. Conversely, the universal cover of a complete locally $\CAT(-1)$ space is (globally) $\CAT(-1)$, see e.g. \cite[Thm II.4.1]{bh}, so every complete locally $\CAT(-1)$ space arises this way. We assume that $\Gamma$ is non-elementary, i.e. $\Gamma$ does not contain $\mathbb{Z}$ as a finite index subgroup.

The \emph{boundary at infinity} of a $\CAT(-1)$ space is the set of equivalence classes of geodesic rays, where two rays $c,d:[0,\infty) \to \tilde X$ are equivalent if they remain a bounded distance apart, i.e., if $d_{\tilde X}(c(t), d(t))$ is bounded in $t$. We denote this boundary by $\partial^\infty \tilde X$. It can be equipped with the cone topology, see e.g. \cite[Chapter II.8]{bh}.  Given a geodesic $c$, we use $c(-\infty)$ and $c(+\infty)$ to denote the points in $\partial^\infty \tilde X$ corresponding to the positive and negative geodesic rays defined by $c$.

Given $\Gamma$, let $\Lambda \subset \partial^\infty \tilde X$ be the limit set of $\Gamma$, i.e., the set of limit points in $\partial^\infty\tilde X$ of $\Gamma\cdot x$ for an arbitrary $x\in \tilde X$. Let $C(\Lambda) \subset \tilde X$ be the convex hull of $\Lambda$; clearly $C(\Lambda)$ is $\Gamma$-invariant. If $C(\Lambda)/\Gamma$ is compact, we say that the action of $\Gamma$ on $\tilde X$ is \emph{convex cocompact}. We also call the space $X=\tilde X/\Gamma$ \emph{convex cocompact}.  If $\Gamma$ already acts cocompactly on $\tilde X$, then $\Lambda=\partial^\infty \tilde X$, $C(\Lambda) = \tilde X$ and $X= C(\Lambda)/\Gamma$. We assume that $X$ is compact, or convex cocompact.

For a $\CAT(-1)$ space $\tilde X$, the space of geodesics $G\tilde X$ can be identified with $[(\partial^\infty\tilde X \times \partial^\infty\tilde X)\setminus \Delta] \times \mathbb{R}$, where $\Delta$ is the diagonal.  We equip $G\tilde X$ with the metric:
\[ d_{G\tilde X}(\tilde c,\tilde c') :=  \int_{-\infty}^\infty d_{\tilde X}(\tilde c(s),\tilde c'(s)) e^{-2|s|}ds.\]
The factor $2$ in the exponent normalizes the metric so that  $d_{G\tilde X}(\tilde c, g_s \tilde c)=s$. The topologies induced on $G\tilde X$ by this metric and on $[(\partial^\infty\tilde X \times \partial^\infty\tilde X)\setminus \Delta] \times \mathbb{R}$ using the cone topology on $\partial^\infty \tilde X$ agree.
We equip $GX$, the space of geodesics in the quotient $X=\tilde X/\Gamma$, with the metric
\[ d_{GX}(c,c') = \inf_{\tilde c,\tilde c'} d_{G\tilde X}(c,c')\]
where the infimum is taken over all lifts $\tilde c, \tilde c'$ of $c$ and $c'$. Since the set of lifts is discrete, the infimum is always achieved.

When $X$ is compact, the space $GX$ is compact, and we study the flow $(g_t)$ on $GX$. When $X$ is convex cocompact, we need to restrict the geodesics we study so that the phase space for our flow is compact. Given a cocompact action of $\Gamma$ on $\tilde X$, let $\hat G\tilde X$ be the set of geodesics with image in $C(\Lambda)$. This set of geodesics is clearly invariant under the geodesic flow and the action of $\Gamma$, and can be identified with $[(\Lambda \times \Lambda)\setminus \Delta]\times \mathbb{R}$. Then $\hat G X = (\hat G\tilde X)/\Gamma$ consists of those geodesics in $X$ which remain in the compact set $C(\Lambda)/\Gamma$. As long as $\Gamma$ is non-elementary, the geodesic flow on $\hat GX$ is transitive. This follows from, for example, \cite[\S8.2]{gromov}.  $\hat GX$ is the non-wandering set for the geodesic flow on $GX$, and it is compact. In the convex-cocompact case, we assume throughout that $\Gamma$ is non-elementary and we study the geodesic flow $(g_t)$ restricted to $\hat G X$. Clearly, if $X$ is compact, $\hat G\tilde X = G\tilde X$ and $\hat GX = GX$. See \cite{tapie} or \cite{mB95} for further background and references on geodesic flow for convex cocompact manifolds.

%
\subsection{Geometric lemmas}

The following lemma has an elementary proof which can be found in \cite[Lemma 2.8]{CLT}.

\begin{lem}\label{lem:X GX bound}
There exists some $L>0$ such that $d_X(c(0), c'(0)) \leq L d_{GX}(c,c')$.
\end{lem}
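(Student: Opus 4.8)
The plan is to reduce the inequality to the universal cover $\tilde X$ and then exploit nonpositive curvature. First I would note that for \emph{any} lifts $\tilde c,\tilde c'$ of $c,c'$ to $\tilde X$ one has $d_X(c(0),c'(0)) \le d_{\tilde X}(\tilde c(0),\tilde c'(0))$, while $d_{GX}(c,c') = \inf d_{G\tilde X}(\tilde c,\tilde c')$ over all such lifts (the infimum being attained, as remarked above). Hence it suffices to find a universal $L$ with $d_{\tilde X}(\tilde c(0),\tilde c'(0)) \le L\, d_{G\tilde X}(\tilde c,\tilde c')$ for every pair of geodesic lines $\tilde c,\tilde c'$ in $\tilde X$; applying this to a minimizing pair of lifts and chaining the two facts gives the lemma with the same constant.

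For the core estimate, set $f(s) = d_{\tilde X}(\tilde c(s),\tilde c'(s))$ and $D = f(0)$, so that $d_{G\tilde X}(\tilde c,\tilde c') = \int_{-\infty}^{\infty} f(s)e^{-2|s|}\,ds$. Since $\tilde X$ is $\CAT(-1)$, hence $\CAT(0)$, a geodesic in the sense of this paper (a local isometry $\mathbb{R}\to\tilde X$) is globally minimizing, and the distance function is convex along pairs of geodesics \cite[Ch.~II.2]{bh}; as $\tilde c,\tilde c'$ are unit-speed geodesics defined on all of $\mathbb{R}$, the function $f$ is therefore convex and nonnegative on $\mathbb{R}$. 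This is exactly where curvature enters: the purely metric bound $f(s) \ge D - 2|s|$ integrates only to $d_{G\tilde X}(\tilde c,\tilde c') \ge D + e^{-D} - 1$, which is $O(D^2)$ as $D \to 0$ and hence too weak; convexity is what promotes the estimate to one linear in $D$.

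To finish, pick a subgradient $\beta$ of the convex function $f$ at $0$, so that $f(s) \ge D + \beta s$ for all $s$. If $\beta \ge 0$ then $f \ge D$ on $[0,\infty)$, and integrating over that half-line gives $d_{G\tilde X}(\tilde c,\tilde c') \ge \int_0^{\infty} D e^{-2s}\,ds = D/2$; if $\beta \le 0$ the same computation on $(-\infty,0]$ gives the identical bound. Either way $d_{\tilde X}(\tilde c(0),\tilde c'(0)) \le 2\, d_{G\tilde X}(\tilde c,\tilde c')$, so $L = 2$ works.

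I do not anticipate a real obstacle here: the only points requiring care are performing the reduction to $\tilde X$ in the right order so that the universal constant survives, and correctly invoking the $\CAT(0)$ facts (local geodesics are globally minimizing, and $s \mapsto d(\gamma(s),\gamma'(s))$ is convex for geodesics $\gamma,\gamma'$). Given those, the estimate is a two-line consequence of convexity together with an elementary integral.
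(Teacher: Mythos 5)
Your proof is correct. The paper does not reproduce the argument but defers to \cite{CLT}, Lemma 2.8, so there is no in-text proof to compare against, but your version is self-contained and sound: the reduction to $\tilde X$ via a minimizing pair of lifts (which exists by the discreteness of the lift set, as the paper notes) is exactly right, and the essential ingredient is the $\CAT(0)$ convexity of $s \mapsto d_{\tilde X}(\tilde c(s),\tilde c'(s))$ along two geodesic lines, which you obtain by restricting to finite subintervals, affinely reparametrizing to $[0,1]$, and invoking the standard comparison statement in \cite{bh}. Your observation that the bare triangle-inequality bound $f(s)\ge D-2|s|$ integrates only to $D+e^{-D}-1 = O(D^2)$ correctly pinpoints why convexity (and not merely the $1$-Lipschitz property of geodesics) is needed, and the subgradient case split then gives the explicit constant $L=2$, which is stronger than the mere existence of $L$ asserted in the lemma.
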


The following lemma shows that the time-$t$ map of the geodesic flow is Lipschitz.
\begin{lem}\label{lem:Lipschitz}
Fix any $T>0$. Then for any $t\in[0,T]$, and any pairs of geodesics $x,y\in GX$,
\[ d_{GX}(g_t x, g_t y) < e^{2T} d_{GX}(x,y). \]
\end{lem}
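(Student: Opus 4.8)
The plan is to estimate $d_{GX}(g_t x, g_t y)$ directly from the explicit integral formula for the metric on $G\tilde X$, then descend to the quotient. Recall that for lifts $\tilde x, \tilde y$ of $x,y$ we have
\[ d_{G\tilde X}(g_t \tilde x, g_t \tilde y) = \int_{-\infty}^\infty d_{\tilde X}(\tilde x(s+t), \tilde y(s+t))\, e^{-2|s|}\, ds, \]
and after the substitution $u = s+t$ this becomes $\int_{-\infty}^\infty d_{\tilde X}(\tilde x(u), \tilde y(u))\, e^{-2|u-t|}\, du$. So the whole point is to compare the weight $e^{-2|u-t|}$ with the weight $e^{-2|u|}$ appearing in $d_{G\tilde X}(\tilde x, \tilde y)$. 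The elementary inequality $|u| \le |u-t| + |t| \le |u-t| + T$ for $t \in [0,T]$ gives $|u-t| \ge |u| - T$, hence $e^{-2|u-t|} \le e^{2T} e^{-2|u|}$ pointwise in $u$.

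First I would carry out the change of variables above and apply this pointwise weight bound under the integral sign, yielding
\[ d_{G\tilde X}(g_t \tilde x, g_t \tilde y) \le e^{2T} \int_{-\infty}^\infty d_{\tilde X}(\tilde x(u), \tilde y(u))\, e^{-2|u|}\, du = e^{2T}\, d_{G\tilde X}(\tilde x, \tilde y). \]
Then I would pass to the quotient: given $c, c' \in GX$, choose lifts $\tilde c, \tilde c'$ achieving the infimum defining $d_{GX}(c,c')$ (the infimum is achieved since the set of lifts is discrete). Applying the $G\tilde X$ inequality to these lifts and noting that $g_t \tilde c, g_t \tilde c'$ are lifts of $g_t c, g_t c'$ gives
\[ d_{GX}(g_t c, g_t c') \le d_{G\tilde X}(g_t \tilde c, g_t \tilde c') \le e^{2T}\, d_{G\tilde X}(\tilde c, \tilde c') = e^{2T}\, d_{GX}(c, c'). \]
To get the strict inequality as stated, I would observe that if $c \ne c'$ then $d_{\tilde X}(\tilde x(u),\tilde y(u))$ is positive on a set of positive measure, and the pointwise weight bound $e^{-2|u-t|} \le e^{2T} e^{-2|u|}$ is strict except possibly at isolated values of $u$ — in fact it is strict for all $u$ when $t > 0$ and for $u > 0$ when $t = 0$ — so the integral inequality is strict; and if $c = c'$ both sides are zero, where the claimed strict inequality trivially fails, so presumably the intended reading excludes that degenerate case (or one simply states $\le$). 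I would phrase the final statement with $\le$ to be safe, or restrict to $x \neq y$.

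I do not expect any serious obstacle here; this is a routine estimate. The only point requiring a moment of care is the interchange of the quotient metric with the flow, i.e. making sure that the infimum over lifts of the pair $(g_t c, g_t c')$ is controlled by evaluating at the lifts $(g_t \tilde c, g_t \tilde c')$ coming from an optimal pair of lifts for $(c,c')$ — this is immediate since $g_t$ commutes with the deck action, so $g_t$ of a lift is a lift, giving the needed one-sided bound, which is all that is required. A secondary minor point is justifying that $d_{\tilde X}(\tilde x(u), \tilde y(u)) e^{-2|u|}$ is integrable so that the manipulations make sense, but this is part of the standing setup for the metric $d_{G\tilde X}$ to be well-defined, so it may be taken for granted.
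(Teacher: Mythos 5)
Your proof is correct and follows essentially the same route as the paper: write $d_{G\tilde X}$ via the integral formula, change variables, compare the shifted weight $e^{-2|s-t|}$ to $e^{-2|s|}$, and descend to the quotient by noting that $g_t$ maps lifts to lifts. The only cosmetic difference is that the paper bounds the weight ratio by $e^{2t}$ (which is $\le e^{2T}$) rather than directly by $e^{2T}$; your observation that the stated strict inequality trivially fails when $x = y$ is a fair reading of what is a slightly loose statement, and using $\le$ is indeed the safe formulation.
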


\begin{proof}
By definition, for properly chosen lifts,
\[ d_{GX}(x,y) = \int_{-\infty}^\infty d_{\tilde X}(\tilde x(s), \tilde y(s)) e^{-2|s|}ds. \]
As $g_t\tilde x$ and $g_t\tilde y$ are lifts of $g_tx$ and $g_ty$, we compute:
\begin{align}
	d_{GX}(g_tx,g_ty) & \leq \int_{-\infty}^\infty d_{\tilde X}(\tilde x(s+t),\tilde y(s+t))e^{-2|s|}ds \nonumber \\
					& = \int_{-\infty}^\infty d_{\tilde X}(\tilde x(s),\tilde y(s))e^{-2|s-t|}ds \nonumber \\
					& = \int_{-\infty}^\infty d_{\tilde X}(\tilde x(s),\tilde y(s))e^{-2|s|}\cdot \frac{e^{-2|s-t|}}{e^{-2|s|}}ds \nonumber 	
\end{align}
It is easy to check that $\frac{e^{-2|s-t|}}{e^{-2|s|}} \leq e^{2t}$, which completes the proof.
\end{proof}

It follows that the flow $(g_t)$ is Lipschitz, using Lemma \ref{lem:Lipschitz} and the fact that $d_{GX}(g_sx, g_tx) = |s-t|$ for all $x$,  and all $s, t$ with $|s-t|$ sufficiently small.

%

\subsection{Busemann functions and horospheres}

We recall the definitions of Busemann functions and horospheres.

\begin{defn}
Let $\tilde X$ be a $\CAT(-1)$ space, $p\in \tilde X$ and $\xi\in \partial^\infty\tilde X$, and $c$ the geodesic ray from $p$ to $\xi$. The \emph{Busemann function centered at $\xi$ with basepoint $p$} is defined as 
\[B_p(-,\xi): \tilde X \to \mathbb{R},\] 
\[q \mapsto \lim_{t\to\infty} d_{\tilde X}(q,c(t))-t.\]
\end{defn}

It is often convenient for us to use the geodesic ray $c(t)$ itself to specify the Busemann function centered at $c(+\infty)$ with basepoint $c(0)$. Thus, for a given geodesic ray $c(t)$, we say the \emph{Busemann function determined by $c$} is the function
\[
B_c(-):= B_{c(0)} (-, c(+\infty)).
\]

It is an easy exercise to verify that any Busemann function is 1-Lipschitz, and it is a well-known fact that Busemann functions on $\CAT(-1)$ spaces are convex in the sense that for any geodesic $\eta$, $B_p(\eta(t),\xi)$ is a convex function of $t$ (see, e.g. \cite[Prop II.8.22]{bh}). The level sets for $B_p(-,\xi)$ are called \emph{horospheres}.

%

\subsection{Stable and unstable sets for $\CAT(-1)$ spaces}

In a $\CAT(-1)$ space, we define strong stable and unstable sets in $G X$ generalizing the strong stable and unstable manifolds for negatively curved manifolds. See also \cite[\S2.8]{mB95}.

\begin{defn}
Let $p_{GX}:G\tilde X \to GX$ be the natural projection. Given $c\in GX$ with lift $\tilde c\in G\tilde X$, the \emph{strong stable set through $c$} is 
\[ W^{ss}(c) = p_{GX}\{ \tilde c' \in G\tilde X : \tilde c'(\infty) = \tilde c(\infty) \mbox{ and } B_{\tilde c}(\tilde c'(0)) = 0\}. \]
For any $\delta>0$, 
\[ W^{ss}_\delta(c) = p_{GX}\{ \tilde c' \in G\tilde X : \tilde c'(\infty) = \tilde c(\infty), \  B_{\tilde c}(\tilde c'(0)) = 0 \mbox{ and } d_{G\tilde X}(\tilde c,\tilde c')<\delta \}. \] 

The \emph{strong unstable set through $c$} is 
\[W^{uu}(c) = p_{GX}\{ \tilde c' \in GX : \tilde c'(-\infty) = \tilde c(-\infty) \mbox{ and } B_{-\tilde c}(\tilde c'(0)) = 0\}\] and \[ W^{uu}_\delta(c) = p_{GX}\{ \tilde c' \in G\tilde X : \tilde c'(-\infty) = \tilde c(-\infty), \ B_{-\tilde c}(\tilde c'(0)) = 0 \mbox{ and } d_{G\tilde X}(\tilde c,\tilde c')<\delta \},\] where $-\tilde c(t) = \tilde c(-t)$.
\end{defn}

\begin{lem}\label{lem:contract}
There exists a constant $C>1$ so that for sufficiently small $\delta$, 
\begin{enumerate}
\item if $c' \in W^{ss}_\delta(c)$ and $t>0$, $d_{GX}(g_tc, g_tc') \leq C d_{GX}(c, c') e^{-t}$;
\item if $c'' \in W^{uu}_\delta(c)$ and  $t<0$, $d_{GX}(g_tc, g_tc'') \leq C d_{GX}(c, c'') e^{t}$.
\end{enumerate}
\end{lem}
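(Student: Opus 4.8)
The plan is to work in the universal cover $\tilde X$ and estimate $d_{G\tilde X}(g_t\tilde c, g_t\tilde c')$ directly from the integral formula defining the metric, exploiting the convexity of Busemann functions. By definition of $W^{ss}_\delta(c)$, choose lifts $\tilde c, \tilde c'$ with $\tilde c'(\infty) = \tilde c(\infty)$, $B_{\tilde c}(\tilde c'(0)) = 0$, and $d_{G\tilde X}(\tilde c, \tilde c') < \delta$; since the flow and the $W^{ss}$ condition are both defined through these lifts, $g_t\tilde c$ and $g_t\tilde c'$ are legitimate lifts of $g_tc$ and $g_tc'$, so $d_{GX}(g_tc, g_tc') \le d_{G\tilde X}(g_t\tilde c, g_t\tilde c') = \int_{-\infty}^\infty d_{\tilde X}(\tilde c(s+t), \tilde c'(s+t)) e^{-2|s|}\, ds$. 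The key geometric input is that in a $\CAT(-1)$ space, two geodesic rays asymptotic to the same boundary point with matched Busemann values are \emph{exponentially} forward-asymptotic: there should be a constant $C_0$ and a scale $\delta_0$ so that $d_{\tilde X}(\tilde c(u), \tilde c'(u)) \le C_0\, e^{-u}\, d_{\tilde X}(\tilde c(0), \tilde c'(0))$ for $u \ge 0$, at least once $d_{\tilde X}(\tilde c(0), \tilde c'(0))$ is small. This is the standard convexity-of-the-metric estimate comparing against $\mathbb H^2$: the distance between two rays to a common ideal point decays like $e^{-u}$ in the hyperbolic plane, and the $\CAT(-1)$ inequality makes it decay at least that fast. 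I would extract this from the convexity of $q \mapsto B_{\tilde c}(q)$ together with the thin-triangle comparison, or cite it from \cite{bh}.

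With that pointwise exponential contraction in hand, the rest is a routine splitting of the integral. Write $\int_{-\infty}^\infty d_{\tilde X}(\tilde c(s+t), \tilde c'(s+t)) e^{-2|s|}\, ds$ and substitute $u = s+t$. For the range $u \ge 0$ (i.e.\ $s \ge -t$), use $d_{\tilde X}(\tilde c(u),\tilde c'(u)) \le C_0 e^{-u} d_{\tilde X}(\tilde c(0),\tilde c'(0)) \le C_0 L\, e^{-u}\, d_{GX}(c,c')$ by Lemma \ref{lem:X GX bound}; one then checks $\int_{-t}^\infty C_0 e^{-(s+t)} e^{-2|s|}\, ds \le (\text{const})\, e^{-t}$ for $t>0$ by elementary calculus (split at $s=0$). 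For the range $u < 0$, one controls $d_{\tilde X}(\tilde c(u),\tilde c'(u))$ crudely: since both Busemann functions are $1$-Lipschitz and the geodesics are unit-speed, $d_{\tilde X}(\tilde c(u), \tilde c'(u)) \le d_{\tilde X}(\tilde c(0),\tilde c'(0)) + 2|u|$, and $\int_{-\infty}^{-t}(d_{\tilde X}(\tilde c(0),\tilde c'(0)) + 2|s+t|) e^{-2|s|}\, ds$ is again bounded by $(\text{const})\, e^{-t} \cdot (\text{something linear in } d_{GX}(c,c'))$ for $t>0$ — here one uses that $|s| \ge t$ on this range so $e^{-2|s|} \le e^{-2t}$ carries the decay, and $d_{G\tilde X}(\tilde c,\tilde c') < \delta$ keeps $d_{\tilde X}(\tilde c(0),\tilde c'(0))$ bounded. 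Collecting terms gives $d_{GX}(g_tc, g_tc') \le C\, d_{GX}(c,c')\, e^{-t}$ for a uniform $C>1$ and all $t>0$, provided $\delta$ is small enough that the pointwise contraction estimate is valid. Part (2) follows from part (1) applied to the time-reversed geodesics: $c'' \in W^{uu}_\delta(c)$ iff $-c'' \in W^{ss}_\delta(-c)$ (using $-\tilde c(t) = \tilde c(-t)$ and $B_{-\tilde c}$ in place of $B_{\tilde c}$), the map $c \mapsto -c$ is an isometry of $GX$ intertwining $g_t$ with $g_{-t}$, so the estimate for $t<0$ transfers verbatim.

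\textbf{Main obstacle.} The only nontrivial point is establishing the pointwise exponential estimate $d_{\tilde X}(\tilde c(u),\tilde c'(u)) \le C_0 e^{-u} d_{\tilde X}(\tilde c(0),\tilde c'(0))$ for forward-asymptotic rays with matched Busemann values in a $\CAT(-1)$ space. One has to be slightly careful: the clean exponential bound holds once the initial distance is below some threshold (hence the hypothesis that $\delta$ is small), and the argument needs the $\CAT(-1)$ comparison rather than just $\CAT(0)$ convexity, since in $\CAT(0)$ one only gets that the distance is non-increasing, not exponentially decaying. I expect to handle this by a comparison argument in $\mathbb H^2$: project to a horosphere, use that the two rays together with a horocyclic segment bound a region comparable to an ideal configuration in $\mathbb H^2$, and read off the $e^{-u}$ rate from the explicit hyperbolic formula, invoking the thin-triangles inequality to pass the inequality in the correct direction. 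Everything else is bookkeeping with the integral defining $d_{G\tilde X}$ and the elementary bound on $e^{-2|s-t|}/e^{-2|s|}$ already used in the proof of Lemma \ref{lem:Lipschitz}.
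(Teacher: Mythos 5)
Your plan matches the paper's in its main steps: you identify the same key geometric input (a pointwise exponential contraction $d_{\tilde X}(\tilde c(u),\tilde c'(u)) \le C_0 e^{-u} d_{\tilde X}(\tilde c(0),\tilde c'(0))$ for $u\ge 0$, obtained by comparison with an ideal configuration in $\mathbb H^2$ via the $\CAT(-1)$ inequality — the paper gets this from an ideal comparison triangle, citing \cite{das-simmons-urbanski}), you split the defining integral for $d_{G\tilde X}(g_t\tilde c, g_t\tilde c')$ at $u=s+t=0$, and you treat part (2) by time reversal. Your handling of the forward range $u\ge 0$ is fine.

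The backward range is where there is a genuine gap. You replace the integrand by the crude pointwise bound $d_{\tilde X}(\tilde c(u),\tilde c'(u)) \le d_{\tilde X}(\tilde c(0),\tilde c'(0)) + 2|u|$ and then integrate. But this produces a term
\[
\int_{-\infty}^{-t} 2|s+t|\, e^{-2|s|}\,ds \;=\; e^{-2t}\int_0^\infty 2w\,e^{-2w}\,dw \;=\; \tfrac{1}{2}\,e^{-2t},
\]
which is a fixed additive quantity, completely independent of $d_{GX}(c,c')$. So your total estimate has the form $C_1 d_{GX}(c,c')e^{-t} + C_2 e^{-2t}$, and the second term does not shrink as $c'\to c$; in particular it cannot be absorbed into a bound of the form $C\,d_{GX}(c,c')\,e^{-t}$. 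Saying that $\delta$ small keeps $d_{\tilde X}(\tilde c(0),\tilde c'(0))$ bounded makes things worse, not better — you need proportionality to $d_{GX}(c,c')$, not mere boundedness. The issue is that for $u<0$ there is no multiplicative pointwise bound of the form $d_{\tilde X}(\tilde c(u),\tilde c'(u)) \le C(u)\, d_{\tilde X}(\tilde c(0),\tilde c'(0))$: geodesics asymptotic at $+\infty$ can separate linearly backward, and the additive term cannot be avoided in a pointwise estimate.

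The fix — which is what the paper does — is to not use a pointwise bound on the backward range at all. Substitute $u = s+t$ in $\int_{-\infty}^{-t} d_{\tilde X}(\tilde c(s+t),\tilde c'(s+t))e^{-2|s|}ds$; since $u\le 0 < t$, one has $|s| = |u-t| = t+|u|$, so $e^{-2|s|} = e^{-2t}e^{-2|u|}$ and the integral equals $e^{-2t}\int_{-\infty}^0 d_{\tilde X}(\tilde c(u),\tilde c'(u))e^{-2|u|}\,du \le e^{-2t}\, d_{G\tilde X}(\tilde c,\tilde c')$, which is manifestly proportional to $d_{GX}(c,c')$. With that replacement your argument closes correctly.
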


\begin{proof}
We prove this for the stable sets in $\tilde X$. The result in $X$ follows, and the proof for the unstable sets is analogous.  First we note that in $\mathbb{H}^2$,  given $\delta_0>0$, there exists $K>1$ so that if $\bar c, \bar c'$ are two geodesics with $d_{\mathbb{H}^2}(\bar c(t), \bar c'(t) ) < \delta_0$, $\bar c(\infty)= \bar c'(\infty) = \bar \xi$ and $\bar c(0), \bar c'(0)$ on the same horosphere centered at $\bar \xi$, then $d_{\mathbb{H}^2}(\bar c(t), \bar c'(t) ) \leq K e^{-t} d_{\mathbb{H}^2}(\bar c(0), \bar c'(0)) $ for all $t>0$.

Let $c' \in W^{ss}_\delta(c)$ with $\delta$ small enough so that, via Lemma \ref{lem:X GX bound}, $d_X( c(0), c'(0)) <\delta_0 $. In $\tilde X$, consider the ideal triangle $\Delta$ with vertices $ \tilde c(0)$, $\tilde c'(0)$ and $\tilde c(\infty)= \tilde c'(\infty)=\xi$. 
There exists an ideal comparison triangle $\bar\Delta= \Delta(\bar c(0), \bar c'(0), \bar\xi)$ in $\mathbb{H}^2$ satisfying the $\CAT(-1)$ comparison estimate $d_{\tilde X}(\tilde c(t), \tilde c'(t)) \leq d_{\mathbb{H}^2}(\bar c(t), \bar c'(t))$, see \cite[Prop. 4.4.13]{das-simmons-urbanski}. We obtain for all $t\geq 0$,
\begin{equation}\label{eqn:contract}
	 d_{\tilde X}(\tilde c(t), \tilde c'(t)) \leq d_{\mathbb{H}^2}(\bar c(t), \bar c'(t)) \leq K d_{\tilde X}(\tilde c(0),\tilde c'(0)) e^{-t}.
\end{equation}
Now we calculate:
\begin{align}
	d_{G X}(g_t c, g_t c') & \leq e^{-2t} \int_{-\infty}^0 d_{\tilde X}(\tilde c(u),\tilde c'(u)) e^{-2|u|}du \nonumber \\
		& \ \ \ \ \ \ + K e^{-t} \int_{-t}^\infty d_{\tilde X}(\tilde c(s),\tilde c'(s)) e^{-2|s|} ds, \nonumber
\end{align}
by breaking our calculation of $d_{G X}(g_t c, g_t c')$ into integrals over $(-\infty,0)$ and $(0,\infty)$, applying a change of variables to the first integral, and equation \eqref{eqn:contract} to the second. We then have that $d_{G X}(g_t c, g_t c') \leq (1+K)e^{-t} d_{G X}(c,c')$.
\end{proof}

%

\section{Metric Anosov flows}\label{sec:Markov}

In this section, we define metric Anosov flows and prove Theorem \ref{t.usefulversion}. The definition was first given by Pollicott in \cite{pollicott}, generalizing the definition of a hyperbolic flow in \cite{bowen-periodic, bowen-symbolic}, and building on the discrete-time definition of a Smale space due to Ruelle \cite{Ruelle}; see \cite{iP2014} for a detailed exposition in discrete time.

\subsection{Metric Anosov flows}\label{sec:smale}
A continuous flow $(\phi_t)$ on a compact metric space $(Y, d)$ is a  \emph{metric Anosov flow}, also known as a \emph{Smale flow}, if it is equipped with a notion of local product structure. That is, a bracket operation so that the point $\langle x,y\rangle $ is analogous in the uniformly hyperbolic setting to the intersection of the unstable manifold of $x$ with the strong stable manifold of $y$. We give the definition. We follow the presentations of Ruelle and Pollicott and start by emphasizing the topological structure needed. For $\epsilon>0$, let us write 
\[ (Y \times Y)_\epsilon := \{(x,y) \in Y \times Y : d(x,y) < \epsilon\}. \]
Assume there exists a constant $\epsilon>0$ and a continuous map
\[ \langle ~,~\rangle ~: (Y \times Y)_\epsilon \to Y, \]
which satisfies:
\begin{itemize}
	\item[a)] $\langle x,x\rangle =x$
	\item[b)] $\langle \langle x,y\rangle ,z\rangle =\langle x, z\rangle $
	\item[c)] $\langle x, \langle y,z\rangle \rangle =\langle x, z\rangle $.
\end{itemize}
We further assume that if $(\phi_sx,\phi_sy) \in (Y\times Y)_\epsilon$ for all $s\in[0,t]$,
\begin{itemize}
	\item[d)]$\phi_t(\langle x,y \rangle)= \langle \phi_tx, \phi_ty \rangle$.
\end{itemize}
We define the \emph{local strong stable set according to $\langle ~,~\rangle $} to be
\[ V^{ss}_\delta (x) = \{ u \mid u = \langle u,x\rangle  \mathrm{ ~and~ } d(x, u)< \delta \}, \]
and the \emph{local unstable set according to $\langle ~,~\rangle $} to be
\[ V^{u}_\delta (x) = \{ v \mid v = \langle x,v\rangle \mathrm{ ~and~ }  d(x, v)< \delta \}. \]
It can be deduced from the properties of the bracket operation that for small $\delta>0$, the map $\langle ~,~\rangle : V^{ss}_{\delta} (x)  \times V^{u}_{\delta} (x) \to Y$ is a homeomorphism onto an open set in $Y$, see \cite[\S7.1]{Ruelle}.
By decreasing $\epsilon$ if necessary, we may assume that for each $x \in Y$, the map $\langle ~,~\rangle : V^{ss}_{\epsilon/2} (x)  \times V^{u}_{\epsilon/2} (x) \to Y$ is a homeomorphism onto an open set in $Y$. 

So far, the structure required on $\langle~,~\rangle$ is purely topological. For this bracket operation to capture dynamics analogous to that of an Anosov flow, we need to add dynamical assumptions.
We define the \emph{metric} local strong stable and strong unstable sets as follows:
\[ W^{ss}_\delta (x; C, \lambda) = \{ v \in V^{ss}_\delta (x) \mid d(\phi_t x, \phi_t y) \leq Ce^{-\lambda t}d(x,y) \text{ for } t \geq 0\}  \]
\[ W^{uu}_\delta (x; C, \lambda) = \{ v \in V^{u}_\delta (x) \mid d(\phi_{-t} x, \phi_{-t} y) \leq Ce^{-\lambda t}d(x,y)  \text{ for } t \geq 0\}.  \]

\begin{defn}\label{defn:metric anosov}
Let $(\phi_t)$ be a continuous flow on a compact metric space $(Y,d)$ and let $\epsilon>0$ and $\langle~,~\rangle$ be as described above. We say that $(\phi_t)$ is a \emph{metric Anosov flow} if there exist constants $C, \lambda, \delta_0>0$ and a continuous function $v\colon(Y\times Y)_\epsilon \to \mathbb{R}$ such that, writing $W^{ss}_{\delta_0} (x) = W^{ss}_{\delta_0} (x; C, \lambda)$ and $W^{uu}_{\delta_0} (x) = W^{uu}_{\delta_0} (x; C, \lambda)$, for any $(x, y) \in (Y \times Y)_\epsilon$ we have
\[ W^{uu}_{\delta_0}(\phi_{v(x,y)}x) \cap W^{ss}_{\delta_0}(y) = \{\langle x,y\rangle \}.  \]
 Furthermore, $v(x,y)$ is the unique small value of $t$ so that $W^{uu}_{\delta_0}(\phi_{t}x) \cap W^{ss}_{\delta_0}(y)$ is non-empty.
\end{defn}

We can deduce the following basic control on scales:  for each small $\delta>0$, there exists $\gamma\in(0, \epsilon)$ so that if $x,y \in (Y \times Y)_{\gamma}$, then $W^{uu}_{\delta}(\phi_{v(x,y)}x) \cap W^{ss}_{\delta}(y) = \{\langle x,y\rangle\}$, and $v(x,y)<\delta$. This follows from continuity of $\langle ~,~\rangle $ and $v$, and the fact that $\langle x,x\rangle =x$ implies $v(x,x)=0$. The details are similar to \cite[Lemma 1.5]{bowen-periodic}. 

In examples of metric Anosov flows, we can consider the bracket operation $\langle~,~\rangle$ as being defined by the metric strong stable and unstable sets via the equation in Definition \ref{defn:metric anosov}, and check that the basic topological properties of the bracket operation hold as a consequence of being defined this way. Thus, it is probably helpful to think of the families $W^{uu}_\delta$ and $W^{ss}_\delta$ as the basic objects in the definition.

A hyperbolic set  for a  smooth flow is locally maximal if and only if it has local product structure \cite[Theorem 6.2.7]{FH}.  Thus, metric Anosov flows are generalizations of locally maximal hyperbolic sets for smooth flows. In particular, an Axiom A flow restricted to its non-wandering set is metric Anosov. 

Another class of examples of metric Anosov flows is given by suspension flows by a H\"older continuous roof function over a shift of finite type. The metric Anosov flow structure for the constant roof function case is described in \cite{pollicott}. The stables and unstables and bracket operation for the H\"older roof function case can be obtained by using H\"older orbit equivalence with the constant roof function case. The details are similar to the second proof of Theorem 5.1.16 in \cite[\S6.1]{FH}, which shows that a smooth time change of a hyperbolic set is a hyperbolic set.

The following property of metric Anosov flows follows the standard proof that Axiom A flows are expansive.
\begin{thm}\emph{(}\cite[Cor 1.6]{bowen-symbolic}, \cite[Prop 1]{pollicott}\emph{)}
A metric Anosov flow satisfies the expansivity property. 
\end{thm}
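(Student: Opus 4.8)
The plan is to follow Bowen's proof that Axiom A flows are expansive, \cite[Cor.~1.6]{bowen-symbolic}, which in fact never uses any manifold structure: it is carried out on the non-wandering set, a compact metric space equipped with a local product structure and uniform contraction/expansion along local stable and unstable sets, and this is precisely the data encoded in Definition~\ref{defn:metric anosov}. Thus, as Pollicott observes \cite[Prop.~1]{pollicott}, the argument goes through with the bracket $\langle\cdot,\cdot\rangle$ playing the role of the local product structure and the metric strong stable/unstable sets $W^{ss}_{\delta_0}(\cdot), W^{uu}_{\delta_0}(\cdot)$ playing the role of the (strong) stable/unstable manifolds. Recall the Bowen--Walters definition \cite{BW}: $(\phi_t)$ is \emph{expansive} if for every $\epsilon>0$ there is $\delta>0$ such that, whenever $x,y\in Y$ and $\alpha\colon\mathbb{R}\to\mathbb{R}$ is continuous with $\alpha(0)=0$ and $d(\phi_t x,\phi_{\alpha(t)}y)\le\delta$ for all $t\in\mathbb{R}$, then $y=\phi_s x$ for some $|s|\le\epsilon$.

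First I would isolate the formal consequences of the axioms that do the work. From (a) and (b): if $p=\langle p,q\rangle$ and $p=\langle q,p\rangle$, then $p=\langle p,q\rangle=\langle\langle q,p\rangle,q\rangle=\langle q,q\rangle=q$, so $V^{ss}_\delta(q)\cap V^u_\delta(q)=\{q\}$. From the uniqueness clause of Definition~\ref{defn:metric anosov}: for a point $p$ and small $c\ne e$ one has $W^{uu}_{\delta_0}(\phi_c p)\cap W^{ss}_{\delta_0}(\phi_e p)=\emptyset$, because $\phi_e p$ lies in $W^{uu}_{\delta_0}(\phi_e p)\cap W^{ss}_{\delta_0}(\phi_e p)$, so $t=e-c$ is the unique small $t$ making $W^{uu}_{\delta_0}(\phi_{t+c}p)\cap W^{ss}_{\delta_0}(\phi_e p)$ nonempty. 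Finally, after noting that the local product structure forces the fixed point set to be finite and isolated (so a neighborhood of each fixed point can be treated separately and trivially), at all other points compactness gives $\kappa>0$ with $d(\phi_t p,p)\ge\kappa|t|$ for small $|t|$, uniformly in $p$; combined with the contraction estimates this shows that $p\in W^{uu}_{\delta_0}(\phi_s p)$ or $p\in W^{ss}_{\delta_0}(\phi_s p)$ forces $s=0$.

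The core argument then runs as follows. Given $\epsilon>0$, use the remark after Definition~\ref{defn:metric anosov} permitting us to ``decrease $\epsilon$ if necessary'' together with the basic control on scales to fix a hierarchy $\delta\ll\delta_0\ll\epsilon$ so that every bracket operation performed below is defined, the correction times $v(\cdot,\cdot)$ it produces are less than $\epsilon$, and all local stable/unstable sets that appear have radius below $\epsilon$ and lie within the range where the product homeomorphism and the contraction estimates apply. Suppose now that $x,y,\alpha$ witness $\delta$-shadowing, with $x$ not a fixed point, and set $z=\langle y,x\rangle$, the unique point of $W^{uu}_{\delta_0}(\phi_v y)\cap W^{ss}_{\delta_0}(x)$, with $v$ uniformly small. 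Since $z\in W^{ss}_{\delta_0}(x)$, the forward orbit of $z$ converges exponentially to the forward orbit of $x$; since $z\in W^{uu}_{\delta_0}(\phi_v y)$, the backward orbit of $z$ converges exponentially to the backward orbit of $y$, hence by $\delta$-shadowing stays within $2\delta_0$ of the orbit of $x$. So the entire orbit of $z$ stays in a uniformly small tube about the orbit of $x$ and is forward-asymptotic to it; one further bracket operation, which transports the strong-unstable data of $z$ onto the orbit of $x$, together with the three facts above, forces $z$ onto the orbit of $x$, and pushing this conclusion back through the local product homeomorphism $\langle\cdot,\cdot\rangle\colon V^{ss}_{\epsilon/2}(x)\times V^u_{\epsilon/2}(x)\to Y$ while tracking the uniformly bounded time-shifts introduced along the way yields $y=\phi_s x$ with $|s|\le\epsilon$.

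I expect the main obstacle to be twofold, neither part deep but both fussy. First, $\alpha$ is only assumed continuous with $\alpha(0)=0$, so to make precise statements such as ``the orbit of $z$ stays close to the orbit of $x$'' one must either invoke the standard reduction that $\alpha$ may be taken an increasing homeomorphism of $\mathbb{R}$ (as in \cite{BW, bowen-periodic}) or reformulate everything in terms of Hausdorff-closeness of orbit segments, sidestepping possible non-monotonicity of $\alpha$. Second, and more seriously, the scale bookkeeping: each application of $\langle\cdot,\cdot\rangle$ and each passage from a strong stable or unstable set at one point to one at a nearby point enlarges radii by a controlled but nonzero amount, so the hierarchy $\delta\ll\delta_0\ll\epsilon$ must be chosen so that, after the finitely many bracket operations in the argument, all objects still lie in the domain of the product homeomorphism and in the range where $d(\phi_t\cdot,\phi_t\cdot)\le Ce^{-\lambda t}d(\cdot,\cdot)$ is valid. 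This is exactly what the two scale-adjustment remarks following Definition~\ref{defn:metric anosov} are for, and it is the only genuinely non-formal ingredient.
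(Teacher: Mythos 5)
Your proposal is correct and takes essentially the same route as the paper: both invoke Bowen's proof for Axiom A flows and observe that, since the argument uses only the local product structure, the contraction estimates on $W^{ss}$/$W^{uu}$, and compactness (never the manifold structure), it transfers verbatim to the metric Anosov setting. The paper packages this by stating Bowen's Lemma~1.5 (that $\delta$-shadowed orbit segments over $[-T,T]$ are $Ce^{-\lambda T}\delta$-close at time $0$ after a small reparametrization) as the key intermediate result and noting expansivity follows by letting $T\to\infty$; you are effectively inlining the proof of that lemma via the bracket point $z=\langle y,x\rangle$, which is the same underlying mechanism.
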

There are at most finitely many fixed points for an expansive flow, and they are all isolated. Expansivity is a corollary of the following result, which says that orbits that are close are exponentially close.

\begin{thm}\label{expclose} \emph{(}\cite[Lemma 1.5]{bowen-symbolic}\emph{)}
For a metric Anosov flow, there are constants $C, \lambda>0$ so that for all $\epsilon>0$, there exists $\delta>0$ so that if $x,y \in Y$ and $h: \R \to \R$ is continuous such that  $h(0)=0$ and $d(\phi_t x, \phi_{h(t)} y) < \delta$ for all $t \in [-T, T]$, then $d(x, \phi_v y)< C e^{-\lambda T}\delta$ for some $|v| < \epsilon$.
\end{thm}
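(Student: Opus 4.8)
The plan is to reduce the statement to the defining features of the metric Anosov structure: the bracket operation, the contraction/expansion estimates built into the metric strong stable and unstable sets $W^{ss}_{\delta_0}$ and $W^{uu}_{\delta_0}$, and the local-product homeomorphism $\langle~,~\rangle\colon V^{ss}_{\delta}(x)\times V^u_\delta(x)\to Y$. First I would fix the constants $C,\lambda>0$ coming from Definition \ref{defn:metric anosov} (halving $\lambda$ if convenient so as to absorb the constant $C$ in intermediate steps), and, given $\epsilon>0$, choose $\delta>0$ small using the "basic control on scales" remark after Definition \ref{defn:metric anosov}: so small that any two points within distance a fixed multiple of $\delta$ have a well-defined bracket lying in the $\delta_0$-local product chart, and that $v(\cdot,\cdot)<\epsilon$ on that scale. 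The point $z=\langle x, \phi_{v} y\rangle$ for the appropriate small time reparametrization will be the candidate orbit that $x$ shadows on the $W^{ss}$ side and $y$ shadows on the $W^{uu}$ side.

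The core of the argument is the standard "close orbits split into a stable and an unstable part" dichotomy. Using that $d(\phi_t x,\phi_{h(t)}y)<\delta$ for $t\in[-T,T]$ with $h(0)=0$ and $h$ continuous, I would first straighten the time reparametrization: since the flow has no fixed points and is expansive-type, a continuous $h$ fixing $0$ and keeping orbits $\delta$-close must stay within a small additive error of the identity on $[-T,T]$ (this uses a local product / flow-box argument, choosing $\delta$ small relative to the flow's local structure). Then set $w=\langle x,y\rangle$ (after adjusting $y$ by a flow time $|v|<\epsilon$ so the bracket is defined and on the nose, as in Definition \ref{defn:metric anosov}): by construction $w\in W^{ss}_{\delta_0}(y)$, so $d(\phi_t w,\phi_t y)\le Ce^{-\lambda t}d(w,y)$ for $t\ge 0$, and $\phi_{v(x,y)}x$ and $w$ lie on a common $W^{uu}_{\delta_0}$, so $d(\phi_{-t}w,\phi_{-t}x)\le Ce^{-\lambda t}d(w,x)$ for $t\ge 0$ — in particular these separations are uniformly bounded by a constant times $\delta$ throughout $[-T,T]$. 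Now run the expansivity-style contraction: evaluating the stable estimate at $t=T$ forces $\phi_T x$ and $\phi_T y$ (up to the bounded reparametrization) to be within $Ce^{-\lambda T}\delta$ of the orbit of $w$, and symmetrically at $t=-T$ the unstable estimate forces closeness at time $-T$; combining these with the triangle inequality along the orbit segment and the fact that $w$ was obtained from $x$ and $y$ by brackets and a flow time $<\epsilon$ yields $d(x,\phi_{v'}y)<Ce^{-\lambda T}\delta$ for some $|v'|<\epsilon$, after relabeling constants. Quantitatively one tracks how the initial $O(\delta)$ separation at the "middle" of the orbit segment shrinks to $O(e^{-\lambda T}\delta)$ at the endpoint that one pulls back to time $0$.

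The main obstacle I anticipate is the bookkeeping around the time reparametrization $h$: the metric Anosov axioms are stated for the genuine flow $\phi_t$, not for a reparametrized pseudo-orbit, so one must first show that $h$ cannot drift far from the identity and then transfer the hypothesis $d(\phi_t x,\phi_{h(t)}y)<\delta$ into a statement of the form "$\phi_t x$ is $\delta'$-close to the $\phi$-orbit of $y$ for all $t\in[-T,T]$" with $\delta'$ a controlled multiple of $\delta$. This is where the no-fixed-points hypothesis and the local product structure are essential, and it is exactly the step that makes the proof "follow the standard proof that Axiom A flows are expansive" — so I would cite or mirror Bowen's Lemma 1.5 in \cite{bowen-symbolic} rather than reprove the flow-box geometry from scratch, emphasizing only the points where the bracket $\langle~,~\rangle$ replaces the intersection of genuine stable and unstable manifolds.
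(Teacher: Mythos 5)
Your high-level strategy matches the paper's: the paper gives no proof beyond the remark that Bowen's proof of \cite[Lemma 1.5]{bowen-symbolic} goes through verbatim once the bracket $\langle\,\cdot\,,\cdot\,\rangle$ replaces the literal intersection of stable and unstable manifolds, and you say exactly this and defer to Bowen. However, the middle paragraph of your sketch misstates how the exponential bound at time $0$ is extracted, and as written that step would fail. You set $w=\langle x,y\rangle$ so that $w\in W^{ss}_{\delta_0}(y)$ and $w\in W^{uu}_{\delta_0}(\phi_v x)$; the contraction estimates you then quote give $d(\phi_T w,\phi_T y)\le Ce^{-\lambda T}d(w,y)$ and $d(\phi_{-T}w,\phi_{-T}\phi_v x)\le Ce^{-\lambda T}d(w,\phi_v x)$. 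These bound the separations at the endpoints $\pm T$ in terms of the unknown quantities $d(w,y)$ and $d(w,\phi_v x)$, which is not progress, and ``combining these with the triangle inequality'' cannot turn $O(\delta)$-closeness of the three orbits near times $\pm T$ into $O(e^{-\lambda T}\delta)$-closeness at $t=0$; the claim that the stable estimate at $t=T$ makes $\phi_T x$ exponentially close to the orbit of $w$ is also unjustified (one only gets $O(\delta)$ there via the hypothesis). The contraction has to be run in the opposite sense. One first uses the hypothesis, the reparametrization control, and the estimates above to show that $d(\phi_{-T}w,\phi_{-T}y)=O(\delta)$ and $d(\phi_{T}w,\phi_{T}\phi_v x)=O(\delta)$. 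Then, since $\phi_{-T}w$ and $\phi_{-T}y$ lie on a common strong stable set and are $O(\delta)$-close, one applies the stable contraction over the interval $[-T,0]$, i.e.\ to the pair $(\phi_{-T}w,\phi_{-T}y)$ for time $T$, to get $d(w,y)\le Ce^{-\lambda T}\cdot O(\delta)$; symmetrically the unstable contraction over $[0,T]$ gives $d(w,\phi_v x)\le Ce^{-\lambda T}\cdot O(\delta)$, and only now does the triangle inequality finish. Since you ultimately cite Bowen for the details, this is a defect in your paraphrase rather than in your plan, but the key inequality does not follow from the steps as you have written them.
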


Bowen's proof goes through without change in the setting of metric Anosov flows. In the case of geodesic flow on a $\CAT(-1)$ space, this is a well known property of geodesics in negative curvature: it holds for geodesics in $\mathbb{H}^2$ by standard facts from hyperbolic geometry, and this can be propagated to the universal cover of a locally $\CAT(-1)$ space by using two nearby geodesics to form a comparison quadrilateral in $\mathbb{H}^2$. The details of the argument in this case are contained in the proof of Proposition 4.3 of \cite{CLT}.

\begin{thm} \label{t.smale}
For a compact (resp. convex cocompact), locally $\CAT(-1)$ space $X$, the geodesic flow on $Y= GX$ (resp. $Y=\hat GX$) is a metric Anosov flow.
\end{thm}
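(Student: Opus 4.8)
The strategy is to exhibit the bracket operation $\langle\,,\,\rangle$ and the metric strong stable/unstable sets directly in terms of the geometric objects already built in \S\ref{subsec:CAT (-1)}, namely the endpoint-pair coordinates $G\tilde X \cong [(\partial^\infty\tilde X\times\partial^\infty\tilde X)\setminus\Delta]\times\mathbb R$ and the strong stable/unstable sets $W^{ss}_\delta(c)$, $W^{uu}_\delta(c)$ defined via Busemann functions. The candidate metric stables and unstables should be exactly $W^{ss}_{\delta_0}(c;C,\lambda)$ and $W^{uu}_{\delta_0}(c;C,\lambda)$, and Lemma \ref{lem:contract} already shows that the geometric sets $W^{ss}_\delta(c)$, $W^{uu}_\delta(c)$ sit inside these metric versions for the constant $C$ produced there and $\lambda=1$. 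So the first step is to record that with this choice, $V^{ss}_\delta$ and $V^{uu}_\delta$ (the sets fixed by the bracket) coincide with the geometric $W^{ss}_\delta$, $W^{uu}_\delta$ up to adjusting $\delta$, using Lemma \ref{lem:X GX bound} to pass between $d_X$ and $d_{GX}$ estimates.

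Next I would define the bracket. Given $c,c'\in GX$ with $d_{GX}(c,c')<\epsilon$, lift to $\tilde c,\tilde c'$ realizing the distance; for $\epsilon$ small the ordered endpoint pairs $(\tilde c(-\infty),\tilde c(+\infty))$ and $(\tilde c'(-\infty),\tilde c'(+\infty))$ are close in $\partial^\infty\tilde X\times\partial^\infty\tilde X$, and in particular the pair $(\tilde c(-\infty),\tilde c'(+\infty))$ is still off the diagonal (non-elementarity and the geometry of the limit set guarantee this also remains in $\Lambda\times\Lambda$ in the convex cocompact case, using that $C(\Lambda)$ is convex). This pair determines a geodesic; reparametrising it so that its basepoint lies on the appropriate horosphere gives a unique geodesic $\langle\tilde c,\tilde c'\rangle$ with $\langle\tilde c,\tilde c'\rangle(-\infty)=\tilde c(-\infty)$, $\langle\tilde c,\tilde c'\rangle(+\infty)=\tilde c'(+\infty)$, $B_{-\tilde c}(\langle\tilde c,\tilde c'\rangle(0))=0$; then $v(c,c')$ is the unique small time shift needed so that $W^{uu}_{\delta_0}(g_{v(c,c')}c)\cap W^{ss}_{\delta_0}(c')=\{\langle c,c'\rangle\}$, i.e. $v(c,c')=B_{c'}(\langle \tilde c,\tilde c'\rangle(0))$ up to sign conventions. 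Continuity of $\langle\,,\,\rangle$ and of $v$ follows from continuity of the endpoint map, continuity of the cone topology, and Lipschitz continuity of Busemann functions; properties (a)--(d) are then checked coordinate-wise: (a) is immediate, (b) and (c) follow because the bracket only depends on $\tilde c(-\infty)$ in the first slot and $\tilde c(+\infty)$ in the second, and (d) follows because $g_t$ acts by translation in the $\mathbb R$ coordinate and commutes with taking endpoints and shifting Busemann normalisation. Finally the defining intersection equation of Definition \ref{defn:metric anosov} holds by construction, and uniqueness of the small $v$ comes from strict convexity of Busemann functions along geodesics (a point noted in \S\ref{subsec:CAT (-1)}): two geodesics with a common endpoint whose basepoints lie on the same horosphere cannot be a nontrivial flow shift of one another.

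The exponential contraction/expansion estimates needed to know that $W^{ss}_{\delta_0}(c;C,\lambda)$ and $W^{uu}_{\delta_0}(c;C,\lambda)$ actually fill out a full neighbourhood of $c$ — rather than just contain the geometric sets — come from Lemma \ref{lem:contract} together with the product structure of the endpoint coordinates: any $c'$ near $c$ can be written (after a bounded flow shift) as the bracket of an element of $W^{uu}_{\delta_0}(c)$ with an element of $W^{ss}_{\delta_0}(c)$, and Lemma \ref{lem:contract} gives the required decay on each factor. I expect the main obstacle to be the convex cocompact case: one must verify that the bracket of two geodesics whose endpoints lie in $\Lambda$ again has both endpoints in $\Lambda$ (so that $\langle c,c'\rangle\in\hat G X$), and that the relevant projections and horosphere normalisations stay inside $C(\Lambda)/\Gamma$; this uses convexity of $C(\Lambda)$ and the fact that $\Lambda$ is closed and $\Gamma$-invariant, but needs to be done carefully near the boundary. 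The compact case is cleaner since then $\Lambda=\partial^\infty\tilde X$ and $C(\Lambda)=\tilde X$, so no such check is required. I would also cite Lemma \ref{expclose} (whose proof for $\CAT(-1)$ geodesic flows is in \cite{CLT}) as the source of the quantitative closeness needed to make $v(c,c')$ small and well-defined.
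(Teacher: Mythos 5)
Your approach is essentially the same as the paper's: define the bracket via endpoint coordinates (the geodesic from $c(-\infty)$ to $c'(+\infty)$, suitably parametrized on a horosphere), verify the bracket axioms, use Lemma \ref{lem:contract} to get the exponential estimates identifying the topological and metric stable/unstable sets, and check $\Gamma$-equivariance and, in the convex cocompact case, that the bracket stays in $\hat G\tilde X$ because both of its ideal endpoints lie in $\Lambda$. One correction to your convention: you normalize $\langle\tilde c,\tilde c'\rangle(0)$ on $B_{-\tilde c}=0$, but for the defining identity $W^{uu}_{\delta_0}(\phi_{v}x)\cap W^{ss}_{\delta_0}(y)=\{\langle x,y\rangle\}$ one needs $\langle c,c'\rangle\in W^{ss}_{\delta_0}(c')$, i.e.\ $B_{c'}(\langle c,c'\rangle(0))=0$; with that normalization, $v(c,c')$ is then (minus) the signed distance from $\langle c,c'\rangle(0)$ to the horosphere $B_{-c}=0$, exactly as in the paper. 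Also, Theorem \ref{expclose} is not needed to make $v$ small; that follows directly from continuity of $v$ together with $v(x,x)=0$, as noted after Definition \ref{defn:metric anosov}.
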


\begin{proof}
First, we define $\langle \cdot, \cdot \rangle$ for geodesics in $G\tilde X$, and verify its properties there. For $(c,c')\in (G\tilde X \times G\tilde X)_\epsilon$, define $\langle c,c'\rangle $ to be the geodesic $d$ with $d(-\infty) = c(-\infty)$, $d(+\infty) = c'(+\infty)$  and  $B_{c'}(d(0)) = 0$ (see Figure \ref{fig:smale}).

It is easy to verify that $\langle \cdot, \cdot \rangle$ is continuous and satisfies conditions (a), (b), (c), and (d) from \S \ref{sec:smale}. $V_\delta^{ss}(c)$ consists of geodesics $\delta$-close to $c$ which have the same forward endpoint as $\tilde c$ and basepoint on $B_{\tilde c} = 0$, and $V_\delta^u(c)$ is geodesics $\delta$-close to $c$ which have the same backward endpoint as $c$. 

It follows from Lemma \ref{lem:contract} that for a sufficiently large choice of $C$ and $\lambda = 1$, 
\[W^{ss}_\delta(c;C,\lambda) = \{c': c'(+\infty) = c(+\infty), \ B_{c}(c'(0))=0, \mbox{ and } d_{G\tilde X}(c,c')<\delta \};  \]
\[W^{uu}_\delta(c;C,\lambda) = \{c': c'(-\infty) = c(-\infty), \ B_{-c}(c'(0))=0, \mbox{ and } d_{G\tilde X}(c,c')<\delta \}.  \]
We define $v:(G\tilde X\times G\tilde X)_\epsilon \to \mathbb{R}$ by setting $v(c,c')$ to be the negative of the signed distance along the geodesic $d = \langle c,c'\rangle $ from its basepoint to the horocycle $B_{-c}=0$. This is clearly continuous, and it is easily checked that 
\[  W^{uu}_\delta(g_{v(c,c')}c) \cap W^{ss}_\delta(c') = \langle c,c'\rangle \]
and that for all other values of $t$, $W^{uu}_\delta(g_tc) \cap W^{ss}_\delta(c') = \emptyset$.

Recall that $X = \tilde X /\Gamma$, and note that these constructions are clearly $\Gamma$-equivariant. For sufficiently small $\epsilon$, there clearly exists a small enough $\delta_0$ such that the scale $\delta_0$ metric local strong stable and unstable sets descend to $GX$, and the bracket operation $\langle \cdot , \cdot \rangle$ and the map $v$ descend to $(GX \times GX)_\epsilon$. By construction, these operations have all the desired properties for a metric Anosov flow. If $X$ is compact, this completes the proof.

Now we extend the argument to the case that $X$ is convex cocompact. The argument above applies verbatim to define a continuous operation  $\langle \cdot ,\cdot \rangle $ on $(GX \times GX)_\epsilon$ which satisfies conditions (a), (b), (c), and (d) from \S \ref{sec:smale}. To show that we have a metric Anosov flow on the compact metric space $\hat GX$, all that remains to check is that $\langle \cdot ,\cdot \rangle $ can be restricted to $\hat GX$. If $c, c' \in \hat G\tilde X$, then $c(-\infty), c'(+\infty) \in \Lambda$, so by construction the geodesic $d = \langle c, c'\rangle $ has  $d(-\infty), d(+\infty) \in \Lambda$. Thus $d \in \hat GX$.
\end{proof}

\begin{figure}[h]
\begin{tikzpicture}

\draw (0,0) circle (3.5);

\fill[gray!50!white] (0,0) circle(1);

\draw[thick,green!50!black, dashed] (-.55,1.95) circle(1.47);
\draw[thick,green!50!black, dashed] (.55,-1.7) circle(1.7);

\node at (1.2,0.4) {$\tilde U$};

\draw (-1,-3.4) arc (-19.7:19.7:10cm);
\draw (1,3.35) arc (160.3:199.7:10cm);
\draw[thick, blue] (-1,3.35) --(1,-3.35);

\draw [ultra thick, ->] (-.43,.5) -- (-.45,.8) ;
\draw [ultra thick, ->] (.43,0) -- (.43,.3) ;
\draw [blue, ultra thick, ->] (-.15,.5) -- (-.24,.85) ;

\node at (-1,-2.8){$c'$};
\node at (1,2.8){$c$};

\node[red] at (-1,.15) {$v(c,c')$} ;

\node[blue] at (1.36,1){$<c,c'>$};
\draw[blue, ->] (.75,.9) -- (-.05,.7) ;

\node[red, rotate=16] at (-.18,.2) {{\huge$\{$}} ;

\node[green!50!black] at (-2.4,1) {$B_{c'}=0$} ;
\node[green!50!black] at (2.4,-.3) {$B_{-c}=0$} ;

\end{tikzpicture}
\caption{The geometric construction showing that geodesic flow on a CAT(-1) space is a metric Anosov flow.}\label{fig:smale}
\end{figure}
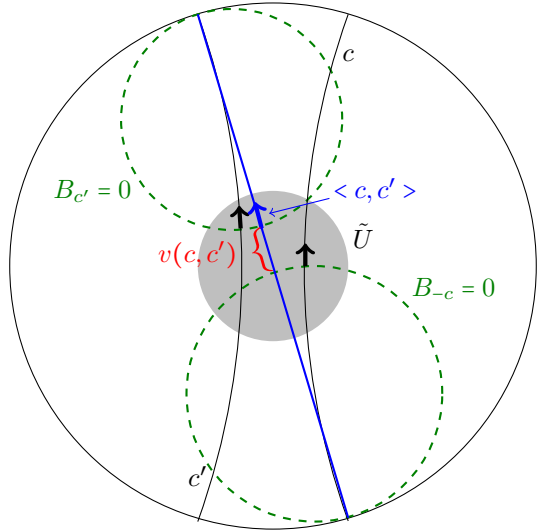

%

\subsection{Sections, proper families, and symbolic dynamics for metric Anosov flows}\label{sec:sections}

We recall the construction of a Markov coding for a metric Anosov flow. We follow the approach originally due to Bowen \cite{bowen-symbolic} for  basic sets for Axiom A flows, which was shown to apply to metric Anosov flows by Pollicott \cite{pollicott}. We recall Bowen's notion of a proper family of sections and a Markov proper family from \cite{bowen-symbolic}.

\begin{defn}\label{defn:proper}
Let $\BBB=\{B_1, \ldots, B_n\}$, and $\DDD=\{D_1, \ldots, D_n\}$ be collections of sections. We say that $(\BBB , \DDD)$ is a \emph{proper family at scale $\alpha>0$} if $\{ (B_i, D_i) : i = 1, 2, \ldots, n \}$ satisfies the following properties:
\begin{enumerate}
	\item $\diam(D_i)< \alpha$ and $B_i \subset D_i$ for each $i \in \{1, 2, \ldots, n\}$;
	\item $\bigcup_{i=1}^n\phi_{(-\alpha, 0)} (\Int_\phi B_i)=Y$;
	\item For all $i\neq j$, if $\phi_{[0, 4\alpha]}(D_i) \cap D_j \neq \emptyset$, then $\phi_{[-4\alpha, 0]}(D_i) \cap D_j = \emptyset$.
\end{enumerate}
\end{defn}

Condition (3) implies that the sets $D_i$ are pairwise disjoint, and the condition is symmetric under reversal of time; that is, it follows that if $\phi_{[-4\alpha, 0]}(D_i) \cap D_j \neq \emptyset$, then $\phi_{[0, 4\alpha]}(D_i) \cap D_j = \emptyset$. In \cite{bowen-symbolic, pollicott}, the time interval in condition (2) is taken to be $[-\alpha, 0]$. Our `open' version of this condition is slightly stronger and convenient for our proofs in \S\ref {sec:good markov}. We now define a special class of proper families, which we call \emph{pre-Markov}.

\begin{defn}\label{def:rectangle}
For a metric Anosov flow, a \emph{rectangle} $R$ in a section $D$ is a subset $R \subseteq \Int_\phi D$ such that for all $x,y \in R$, $\Proj_{D} \langle x,y\rangle  \in R$.
\end{defn}

\begin{defn}[Compare with \S 2 in \cite{pollicott}, \S 7 in \cite{bowen-symbolic}]\label{defn:pollicott}
Let $(\BBB, \DDD)$ be a proper family at scale $\alpha>0$. We say that $(\BBB, \DDD)$ is \emph{pre-Markov} if the sets $B_i$ are closed rectangles and we have the following property:
\begin{equation} \label{premarkov}
\mbox{If }B_i \cap \phi_{[-2 \alpha, 2 \alpha]}B_j \neq \emptyset, \mbox{ then } B_i \subset \phi_{[-3\alpha, 3\alpha]} D_j.
\end{equation}
\end{defn}

The existence of pre-Markov proper families is left as an exercise by both Bowen and Pollicott since it is fairly clear that the conditions asked for are mild; some rigorous details are provided in \cite{BW}. In Proposition \ref{prop:good mpf}, we complete this exercise by providing a detailed proof of the existence of a special class of pre-Markov proper families. For our purposes, we must carry out this argument carefully since it is crucial for obtaining the H\"older return time property of Theorem \ref{t.usefulversion}.

We now define a Markov proper family. This is a proper family where the sections are rectangles, and with a property which can be informally stated as `different forward $\RRR$-transition implies different future, and different backward $\RRR$-transitions implies different past.'

\begin{defn}
A proper family $(\RRR, \SSS)$ is \emph{Markov} if the sets $R_i$ are rectangles, and we have the following Markov property:  let $H$ denote the Poincar\'e return map for $\bigcup_i R_i$ with respect to the flow $(\phi_t)$. Then if $ x \in R_i$ and $ H(x) \in R_j$, and $z \in R_i$ and $H(z) \notin R_j$, then $z \notin V^{ss}_{\diam R_i}(x)$. Similarly, if  $ x \in R_i$ and $ H^{-1}(x) \in R_j$, and $z \in R_i$ and $H^{-1}(z) \notin R_j$, then $z \notin V^u_{\diam R_i}(x)$.
\end{defn}

The reason we call the families defined in Definition \ref{defn:pollicott} \emph{pre-Markov} is because the argument of \S7 of \cite{bowen-symbolic}, and  \S2 of \cite{pollicott} gives a construction to build Markov families out of pre-Markov families.  The motivation for setting things up this way is that the existence of pre-Markov families can be seen to be unproblematic, whereas the existence of proper families with the Markov property is certainly non-trivial. More formally, we have:

\begin{lem}  \label{PtoB}
If $(\BBB, \DDD)$ is a pre-Markov proper family at scale $\alpha$ for a metric Anosov flow, then there exists a Markov proper family $(\RRR, \SSS)$ at scale $\alpha$ so that for all $i$, there exists an integer $j$ and a time  $u_j$ with $|u_j|<<\alpha$ such that $R_i \subset \phi_{u_j}B_j$.
\end{lem}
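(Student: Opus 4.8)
\textbf{Proof proposal for Lemma \ref{PtoB}.}

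The plan is to run the standard refinement procedure of Bowen (\cite[\S7]{bowen-symbolic}) and Pollicott (\cite[\S2]{pollicott}), keeping careful track of the fact that we only ever replace sections $B_i$ by sub-rectangles (translated by a small amount along the flow), so that the concluding containment $R_i \subset \phi_{u_j} B_j$ with $|u_j| \ll \alpha$ can be read off the construction. First I would fix the pre-Markov proper family $(\BBB, \DDD)$ at scale $\alpha$ and consider the Poincar\'e return map $H$ for $\bigcup_i B_i$. Using property \eqref{premarkov} together with condition (3) of Definition \ref{defn:proper}, one sees that whenever $H$ carries a point of $B_i$ into $B_j$, the relevant overlap happens within the flow-box $\phi_{[-3\alpha,3\alpha]}D_j$, and the return time lies in a controlled range; this is what makes the combinatorics of the refinement finite and the time-shifts small. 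Define, for each pair $(i,j)$ with $B_i \cap H^{-1}(B_j) \neq \emptyset$, the ``transition piece'' $B_i^{(j)} = \overline{\Int_\phi B_i \cap H^{-1}(\Int_\phi B_j)}$, and similarly for backward transitions; the rectangles $R$ of the Markov family will be obtained as the pieces of the common refinement of all the partitions of each $B_i$ induced by finitely many forward and backward transition sets, after intersecting with the rectangle-closure operation (project $\langle x,y\rangle$ back into the section as in Definition \ref{def:rectangle}).

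The key technical steps, in order, are: (i) verify that each $B_i$ is partitioned (up to boundaries, i.e. on $\Int_\phi B_i$) into finitely many rectangles by these forward/backward transition conditions — finiteness comes from the fact that, by the proper family axioms and \eqref{premarkov}, only finitely many $B_j$ are ``visible'' from $B_i$ under $H^{\pm 1}$ within the relevant time window; (ii) check that the resulting pieces are genuinely rectangles in the sense of Definition \ref{def:rectangle}, which uses the bracket identities (a)--(c) from \S\ref{sec:smale} and the fact that $\Proj_D$ commutes appropriately with $\langle\cdot,\cdot\rangle$ on a section — this is where one must be slightly careful, since a naive intersection of rectangles need not be a rectangle, and one takes the rectangle generated by the intersection; (iii) establish the Markov property for $(\RRR,\SSS)$: if $x \in R_i$, $H(x) \in R_j$ but $H(z) \notin R_j$ for some $z \in R_i$, then $x$ and $z$ lie in different pieces of the forward-transition partition, which by construction means $z \notin V^{ss}_{\diam R_i}(x)$ — this is essentially the definition of how the refinement was set up, using that $V^{ss}$ leaves of points in a common section are distinguished exactly by their forward $H$-itinerary; the backward statement is symmetric; (iv) promote each $R_i$ to a genuine section by flowing a small amount: since the pieces are sub-rectangles of $\Int_\phi B_j$ for the appropriate $j$, one can take $\SSS = \DDD$-pieces (or slightly enlarged $D_j$'s) and the $R_i$ sit inside $\phi_{u_j} B_j$ where $u_j$ records the (small) flow-time adjustment coming from the transition — by \eqref{premarkov} and Definition \ref{defn:proper}(3) this adjustment is bounded by a fixed small multiple of $\alpha$, giving $|u_j| \ll \alpha$. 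Finally one checks that $(\RRR,\SSS)$ is still a proper family at scale $\alpha$: conditions (1) and (3) are inherited from $(\BBB,\DDD)$ since the $R_i$ are subsets of (small flow-translates of) the $B_j \subset D_j$, and condition (2) — the covering property $\bigcup \phi_{(-\alpha,0)}(\Int_\phi R_i) = Y$ — holds because the $R_i$, as the pieces of a partition of the $\Int_\phi B_j$, still cover $\bigcup_j \Int_\phi B_j$ up to a measure-zero / boundary set, and one uses the ``open'' strengthening of (2) in Definition \ref{defn:proper} noted in the text to absorb the boundary loss.

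The main obstacle I expect is step (ii)--(iii): ensuring that the common refinement of the transition partitions consists of sets that are \emph{simultaneously} rectangles and small enough that the Markov property holds with the constant $\diam R_i$ (rather than $\diam B_i$). One has to interleave the rectangle-closure operation with the refinement carefully — refining, then closing up under $\Proj_D\langle\cdot,\cdot\rangle$, could in principle re-merge pieces or enlarge diameters. Bowen handles this by showing the transition sets are themselves rectangles (so their intersections, suitably interpreted, stay rectangles) and by noting the diameters only shrink; I would follow that argument, citing \cite[\S7]{bowen-symbolic} and \cite[\S2]{pollicott} for the parts that go through verbatim in the metric Anosov setting, and supplying detail only for the bookkeeping of the flow-times $u_j$, which is the one point not explicitly tracked in those references but which is exactly what we need downstream for the H\"older estimates in Theorem \ref{t.usefulversion}.
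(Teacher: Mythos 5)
The paper does not prove this lemma directly: it is cited to Bowen \cite[\S 7]{bowen-symbolic} and to the adaptation in Pollicott \cite[\S 2, ``Key Lemma'']{pollicott}, and the text following the statement only sketches what those constructions do (subdividing sections into many smaller pieces and flowing by small times to restore disjointness). Your proposal is an attempt at a self-contained reconstruction, and it contains a genuine gap.

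The gap is in steps (i)--(iii): the common refinement by one-step forward and backward transition sets $B_i^{(j)} = \overline{\Int_\phi B_i \cap H^{-1}(\Int_\phi B_j)}$ cannot by itself produce the Markov property, because that property is an infinite-horizon condition on itineraries while your partition records only a single step. Concretely, take $x, z \in R_i \subset B_k$ with $z \in V^{ss}_{\diam R_i}(x)$ and $H(x), H(z) \in B_l$. Whether $H(x)$ and $H(z)$ land in the \emph{same} piece $R_j$ of the refinement of $B_l$ depends on whether $H^2(x)$ and $H^2(z)$ lie in the same $B_m$, and so on indefinitely; no finite-step refinement controls this, so there is no reason for the Markov property to hold for the resulting pieces. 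Your own step (iii) betrays the problem: you begin with $x, z \in R_i$ (hence in the \emph{same} piece of the forward-transition partition, by definition of $R_i$), then assert ``$x$ and $z$ lie in different pieces of the forward-transition partition,'' which contradicts the hypothesis rather than deriving the conclusion. The accompanying claim, that ``$V^{ss}$ leaves of points in a common section are distinguished exactly by their forward $H$-itinerary,'' is precisely what the Markov rectangles are built to make true; it is not a property of the pre-Markov sections $B_i$, so it cannot be invoked at this point in the argument.

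Bowen's construction, which Pollicott adapts, is genuinely different from the refinement you describe and is exactly what resolves this. One does not refine the original sections directly. The proper family is first used to build a symbolic space coding $\alpha$-pseudo-orbits through the $B_i$'s, and the shadowing/expansivity estimate (whose analogue here is Theorem \ref{expclose}) gives a map $\theta$ from that symbolic space onto $Y$. The preliminary rectangles $T_i$ are then defined as images under $\theta$ of cylinder sets $\{\underline q : q_0 = i\}$. Because these sets are indexed by bi-infinite symbol sequences, the equivalence ``same local strong stable leaf $\iff$ same forward itinerary'' is built in, which is what makes the Markov property hold. Only afterwards are the $T_i$ subdivided in the style of Sinai (according to stable/unstable overlaps with intersecting $T_j$'s) and translated by distinct small flow-times to restore disjointness; these small times are the $u_j$ in the statement, and the containment $R_i \subset \phi_{u_j} B_j$ is read off from the fact that each $T_i$ sits inside some $B_j$. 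Your sketch skips the shadowing step entirely, and that omission is what makes the Markov property inaccessible by your route.
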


This is proved in \cite[ \S7]{bowen-symbolic} in the case of Axiom A flows, and the construction in \S2 of \cite{pollicott} adapts this proof to the case of metric Anosov flows, culminating in the statement of \cite[ \S 2.2 `Key Lemma']{pollicott}.  We note that for metric Anosov flows, pre-Markov proper families $(\BBB, \DDD)$ can be found at any given scale $\alpha>0$, and thus Lemma \ref{PtoB} provides Markov proper families at any small scale $\alpha>0$ (in the sense of Definition \ref{defn:proper}).


We note that in \cite{bowen-symbolic, pollicott}, pre-Markov families are also equipped with an `intermediate' family of sections $\KKK$, which are a collection of closed rectangles $K_i \subset  \Int_\phi B_i$, and a scale $\delta>0$ chosen so any closed ball $\overline B(x, 6\delta)$ is contained in some $\phi_{[-2 \alpha, 2 \alpha]}K_i$. Given a pre-Markov proper family, such a collection $\KKK$ and such a $\delta>0$ can always be found. The only role of theses intermediate families is internal to the proof of Lemma \ref{PtoB}, and thus we consider the existence of the family $\KKK$ to be a step in the proof of Lemma \ref{PtoB} rather than an essential ingredient which needs to be included in the definition of proper families. 
 
The proof of Lemma \ref{PtoB}  involves cutting up sections from the pre-Markov family into smaller pieces; this can be carried out so that the resulting sections all have diameter less than $\alpha$. The flow times $u_i$ are used to push rectangles along the flow direction a small amount to ensure disjointness. These times can be taken arbitrarily small, in particular, much smaller than $\alpha$. Note that if $\BBB = \{B_1, \ldots, B_n\}$, then the collection $\RRR= \{R_1, \ldots R_N\}$ provided by Lemma \ref{PtoB} satisfies $N >> n$.

%

\subsection{Markov partitions}

Given a collection of sections $\RRR$, let $H: \bigcup_{i=1}^N R_i \to \bigcup_{i=1}^N R_i$ be the Poincar\'e (return) map, and let $r: \bigcup_{i=1}^N R_i \to (0, \infty)$ be the return time function, which are well defined in our setting.

\begin{defn}
For a Markov proper family $(\RRR, \SSS)$ for a metric Anosov flow, we define the coding space to be
\[ \Sigma=\Sigma(\RRR)= \left \{ \underline x \in \prod_{-\infty}^{\infty}\{1, 2, \ldots, N\} \mid \text{ for all } l, k \geq 0, \bigcap_{j=-k}^l H^{-j}( \Int_\phi  R_{x_j}) \neq \emptyset \right \}. \]
\end{defn}

In \S2.3 of \cite{pollicott}, the symbolic space $\Sigma(\RRR)$ is shown to be a shift of finite type. There is a canonically defined map $\pi: \Sigma(\RRR) \to \bigcup_i R_i$ given by $\pi(\underline x) = \bigcap_{j=-\infty}^\infty H^{-j}(R_{x_j})$. Let $\rho= r \circ \pi: \Sigma \to (0, \infty)$ and let $\Sigma^\rho=\Sigma^\rho(\RRR)$ be the suspension flow over $\Sigma$ with roof function $\rho$. We extend $\pi$ to $\Sigma^\rho$ by $\hat \pi(\underline x, t) = \phi_t(\pi(\underline x))$. Pollicott shows the following.

\begin{thm} \emph{(}\cite[Theorem 1]{pollicott}\emph{)}
If $(\phi_t)$ is a metric Anosov flow on $Y$, and $(\RRR, \SSS)$ is a Markov proper family, then $\Sigma(\RRR)$ is a shift of finite type and the map $\hat\pi: \Sigma^\rho \to Y$ is finite-to-one, continuous, surjective, injective on a residual set,  and satisfies  $\hat \pi \circ f_t = \phi_t \circ \hat \pi$, where $(f_t)$ is the suspension flow.
\end{thm}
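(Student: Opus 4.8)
The plan is to run the construction of symbolic dynamics for basic sets of Axiom A flows from \cite{bowen-symbolic}, which Pollicott adapted to metric Anosov flows in \cite{pollicott}, checking at each step that only the metric Anosov axioms are used: the bracket $\langle\cdot,\cdot\rangle$ and its local product structure, the exponentially contracting families $W^{ss}_{\delta_0}(\cdot;C,\lambda)$ and $W^{uu}_{\delta_0}(\cdot;C,\lambda)$, and expansivity in the quantitative form of Theorem~\ref{expclose}, in place of a smooth hyperbolic splitting.

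First I would show $\Sigma(\RRR)$ is a shift of finite type. Put $A_{ij}=1$ precisely when $R_i\cap H^{-1}(R_j)\neq\emptyset$; every $\underline x\in\Sigma(\RRR)$ is clearly $A$-admissible. For the converse, the Markov property says that for admissible $(i,j)$ the set $R_i\cap H^{-1}(R_j)$ is saturated by the slices $V^{ss}_{\diam R_i}(\cdot)\cap R_i$ and its $H$-image by the slices $V^{u}_{\diam R_j}(\cdot)\cap R_j$; this is exactly what lets one extend any finite admissible word to a bi-infinite one whose cylinder is nonempty, by intersecting a stable slice dictated by the future with an unstable slice dictated by the past via the local product structure. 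Hence $\Sigma(\RRR)=\Sigma_A$. Next, for $\underline x\in\Sigma$ I would check that $\pi(\underline x)=\bigcap_{j}H^{-j}(R_{x_j})$ is a single point: the forward cylinder $\bigcap_{j\ge 0}H^{-j}(R_{x_j})$ consists of points whose forward $H$-orbits shadow one another inside sets of diameter less than $\alpha$, so by Theorem~\ref{expclose} and the contraction estimate its diameter transverse to the flow tends to $0$; symmetrically for the backward cylinder; and a shrinking stable slice meets a shrinking unstable slice through a common point in exactly one point. The same diameter bounds give that $\pi$ is continuous.

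With this in hand, $\rho=r\circ\pi$ is continuous because the first-return time $r$ is continuous on each piece $R_i\cap H^{-1}(R_j)$ --- the sections of the family $\SSS$ furnish flow-box neighbourhoods of the corresponding $R_i$ --- while $\pi$ carries the clopen cylinder $[x_0=i,\,x_1=j]$ continuously into that piece, so $\rho$ is continuous on each such cylinder and hence on $\Sigma$. Then $\hat\pi(\underline x,t)=\phi_t\pi(\underline x)$ descends to $\Sigma^\rho$ because $H(\pi(\underline x))=\pi(\sigma\underline x)$ is compatible with the identification $(\underline x,\rho(\underline x))\sim(\sigma\underline x,0)$, it is continuous, and $\hat\pi\circ f_t=\phi_t\circ\hat\pi$ holds by construction. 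For surjectivity, $\bigcup_i R_i$ is a cross-section met by every $\phi$-orbit with return times bounded above and below (proper-family conditions (2) and (3)); the points of $\bigcup_i R_i$ whose whole $H$-orbit avoids $\bigcup_i(R_i\setminus\Int_\phi R_i)$ form a residual set, each lies in the image of $\pi$ via its evident itinerary, so $\pi(\Sigma)$ is dense in the closed set $\bigcup_i R_i$ and therefore equals it; then $\hat\pi(\Sigma^\rho)=\phi_{[0,\infty)}\big(\bigcup_i R_i\big)=Y$. Finally, the finite-to-one property and injectivity on the residual set $Y\setminus\bigcup_{t\in\mathbb{R}}\phi_t\big(\bigcup_i(R_i\setminus\Int_\phi R_i)\big)$ come from Bowen's analysis of the itineraries of a point of $\bigcup_i R_i$: a stable-boundary crossing propagates coherently to all later times and an unstable one to all earlier times, so the set of $\Sigma$-codes of a given point injects into a finite product and reduces to a single code once the orbit avoids the rectangle boundaries.

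I expect the genuinely delicate step to be transferring the flow-level hyperbolicity through the discrete first-return map $H$ --- whose return time is only bounded, not constant, and which itself displaces points along the flow direction --- so as to obtain honestly shrinking transverse diameters for the cylinder sets, together with the closely related continuity of $\rho$ and the boundary bookkeeping underlying surjectivity and the finite-to-one count; once these are in place, what remains is the standard symbolic-dynamics formalism transcribed to the metric Anosov setting.
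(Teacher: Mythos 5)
The paper does not prove this statement; it is quoted verbatim as \cite[Theorem~1]{pollicott} and used as a black box, so there is no proof in the paper to compare against. Your sketch is a reasonable reconstruction of the Bowen--Pollicott construction that underlies the cited result: the transition matrix built from $H$-admissibility and the Markov saturation of $R_i\cap H^{-1}(R_j)$ by stable and unstable slices, the well-definedness and continuity of $\pi$ via shrinking transverse diameters of cylinder sets using the quantitative expansivity (the paper's Theorem~\ref{expclose}) and exponential contraction, the semiconjugacy by construction, surjectivity from the cross-section and return-time bounds in the proper-family axioms, and the finite-to-one and residual-injectivity count from Bowen's boundary bookkeeping. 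You have also correctly flagged the technically delicate point, namely transferring flow-level hyperbolicity through the first-return map with variable return time. For full credit one would have to carry out the details of exactly those steps (and in particular the nonemptiness of cylinders for arbitrary $A$-admissible bi-infinite words, which is where the Markov property earns its keep), but as an outline this matches \S7 of \cite{bowen-symbolic} and \S2.3 of \cite{pollicott}, which is precisely what the paper is invoking.
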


We say that a flow has a \emph{strong Markov coding} if the conclusions of the previous theorem are true with the additional hypothesis that the roof function $\rho$ is H\"older and that the map $\hat \pi$ is H\"older. This is condition (III) on p.195 of \cite{pollicott}. Since $\rho= r \circ \pi: \Sigma \to (0, \infty)$, it suffices to know that $\hat\pi$ is H\"older and $r$ is H\"older where it is continuous. Thus, we can formulate Pollicott's result as follows:

\begin{thm} [Pollicott] \label{t.pollicott}
If $(\phi_t)$ is a metric Anosov flow, and there exists a Markov proper family $(\RRR, \DDD)$ such that the return time function $r$ for $\RRR$ is H\"older where it is continuous, and the natural projection map $\hat \pi: \Sigma^\rho \to X$ is H\"older, then the flow has a strong Markov coding. 
\end{thm}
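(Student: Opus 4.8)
The plan is to recognize that this theorem is a direct consequence of the definition of a strong Markov coding together with the theorem of Pollicott quoted above (\cite[Theorem 1]{pollicott}). By definition, the flow has a strong Markov coding exactly when the conclusions of that theorem hold \emph{and}, in addition, both the roof function $\rho$ and the projection $\hat\pi$ are H\"older. Since $(\RRR,\SSS)$ is a Markov proper family, \cite[Theorem 1]{pollicott} applies verbatim and yields the shift of finite type $\Sigma=\Sigma(\RRR)$, the roof function $\rho=r\circ\pi$, and the finite-to-one continuous surjection $\hat\pi\colon\Sigma^\rho\to Y$ that is injective on a residual set and intertwines the flows. The hypothesis already supplies that $\hat\pi$ is H\"older, so the whole task reduces to proving that $\rho$ is H\"older on $\Sigma$.

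To do this I would first observe that $\pi(\underline x)=\hat\pi(\underline x,0)$, so $\pi$ is the composition of $\hat\pi$ with the inclusion $\iota\colon\Sigma\hookrightarrow\Sigma^\rho$ of the base level. Since the return times of a proper family are bounded above and bounded away from $0$, this inclusion $\iota$ is Lipschitz for the shift metric and the suspension metric; hence $\pi$ is H\"older with the same exponent as $\hat\pi$. Next I would patch the hypothesis on $r$ across cylinders. Decompose $\Sigma$ into its finitely many clopen $2$-cylinders $[i,j]=\{\underline x : x_0=i,\ x_1=j\}$. Because $\pi(\underline x)\in R_{x_0}$ and $H(\pi(\underline x))=\pi(\sigma\underline x)\in R_{x_1}$, the map $\pi$ carries $[i,j]$ into $R_i\cap H^{-1}(R_j)$, which is exactly a set on which $r$ is assumed H\"older; composing with the H\"older map $\pi$ shows $\rho|_{[i,j]}=r\circ\pi|_{[i,j]}$ is H\"older. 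Finally, distinct $2$-cylinders lie at distance at least $1/2$ apart in the shift metric and $\rho$ is bounded, so a function that is H\"older on each of these finitely many clopen pieces is H\"older on all of $\Sigma$ (absorbing the uniform separation into the constant). This gives $\rho$ H\"older and finishes the proof.

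The argument is essentially bookkeeping, so I do not expect a serious obstacle; the one point that needs genuine care is that $r$ is assumed H\"older only \emph{where it is continuous}, i.e.\ on each $R_i\cap H^{-1}(R_j)$ separately and not on $\bigcup_i R_i$ as a whole, so one really must use the clopen structure of the $2$-cylinders of $\Sigma$ --- together with boundedness of $r$ --- to assemble the local estimates into global H\"older continuity of $\rho$. A secondary point worth a sentence is that the metric on $\Sigma^\rho$ with respect to which $\hat\pi$ is assumed H\"older should be one that makes sense for a merely bounded positive roof function (such as the Bowen--Walters metric), so that the hypothesis is not circular; once $\rho$ has been shown H\"older this coincides with the usual metric on the suspension.
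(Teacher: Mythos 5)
Your proposal is correct and follows the same route the paper takes, which is to view the theorem as a direct reformulation of \cite[Theorem 1]{pollicott} plus the observation that $\rho = r \circ \pi$ inherits H\"older regularity from the hypotheses. The paper dispatches this in a single sentence immediately before the theorem statement (``Since $\rho = r\circ\pi\colon\Sigma\to(0,\infty)$, it suffices to know that $\hat\pi$ is H\"older and $r$ is H\"older where it is continuous''); you have simply unpacked that sentence carefully, in particular the step of decomposing $\Sigma$ into $2$-cylinders so that the ``H\"older where continuous'' hypothesis on $r$ can be patched into global H\"older continuity of $\rho$, which is exactly the right point to make explicit.
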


A drawback of this statement is that it is not clear how to meet the H\"older requirement of these hypotheses. Our Theorem \ref{t.usefulversion} is designed to remedy this. Recall the hypotheses of Theorem \ref{t.usefulversion} are that the metric Anosov flow is H\"older and that there exists a pre-Markov proper family $(\BBB, \DDD)$ so that the return time function and the projection maps to the $B_i$ are H\"older. We now prove Theorem \ref{t.usefulversion} by showing that these hypotheses imply the hypotheses of Theorem \ref{t.pollicott}.

\begin{proof}[Proof of Theorem \ref{t.usefulversion}.]

We verify the hypotheses of Theorem \ref{t.pollicott}. Let the family $(\RRR, \SSS)$ be the Markov family provided by applying Lemma \ref{PtoB} to $(\BBB, \DDD)$. Recall that by Lemma \ref{PtoB}, we can choose the scale $\alpha$ for $(\BBB, \DDD)$ as small as we like. Then $\RRR$ consists of rectangles $R_i$ which are subsets of elements of $\BBB$ shifted by the flow for some small time. Thus, the return time function for $\RRR$ inherits H\"older regularity from the return time function for $\BBB$.

Now we use Theorem \ref{expclose}  to show that the projection map $\pi$ from $\Sigma(\RRR)$ is H\"older. Fix some small $\alpha_0>0$. Choose $\epsilon>0$ sufficiently small that the projection maps to any section $S$ with diameter less than $\alpha_0$ are well-defined on $\phi_{[-\epsilon,\epsilon]}S$. Then let us suppose that our Markov family is at scale $\alpha$ so small that $\alpha<\alpha_0$ and $3\alpha < \delta$ where $\delta$ is given by Theorem \ref{expclose} for the choice of $\epsilon$ above. Let $\underline i, \underline j \in \Sigma(\RRR)$ which agree from $i_{-n}$ to $i_n$. We write $x, y$ for the projected points, which belong to some $B_{i^*}$. If two orbits pass through an identical finite sequence $R_{i_{-n}}, \ldots, R_{i_0}, \ldots R_{i_n}$ then they are  $3\alpha$-close for time at least $2n$ multiplied by the minimum value of the return map on $\RRR$, which we write $r_0$. The distance is at most $3 \alpha$ since $\diam R_i < \alpha$ and the return time is less than $\alpha$. Thus, by Theorem \ref{expclose} there is a time $v$ with $|v|<\epsilon$ so that $d(x, \phi_v y)<  \alpha e^{-\lambda 2nr_0}$. Using H\"older continuity of the projection map $\Proj_{R_{i_0}}$, which is well-defined at $\phi_vy$ since $|v|<\epsilon$, we have 
\[
d(x,y) = d(\Proj_{R_{i_0}}x, \Proj_{R_{i_0}}\phi_v y)< Cd(x, \phi_vy)^\beta,
\] 
where $\beta$ is the H\"older exponent for the projection map. Thus, $d(x,y) < C\alpha e^{-(2\beta \lambda r_0) n}$. Since $d(\underline i, \underline j) = 2^{-n}$, this shows the projection $\pi$ from $\Sigma(\RRR)$ is H\"older. 

It follows that the roof function $\rho = \pi \circ r$ is H\"older.  Thus, since $\pi$ is H\"older, the roof is H\"older and the flow is H\"older, it follows that $\hat \pi: \Sigma^\rho \to X$ is H\"older.
\end{proof}

The advantage of the formulation of Theorem \ref{t.usefulversion} is that the hypotheses for the strong Markov coding are now written entirely in terms of properties of the flow and families of sections $\DDD$. In the terminology introduced above, Bowen showed that transitive Axiom A flows admit a strong Markov coding, using smoothness of the flow and taking the sections to be smooth discs to obtain the regularity of the projection and return maps. For a H\"older continuous metric Anosov flow, we do not know of a general argument to obtain this regularity. Our strategy to verify the hypotheses of Theorem \ref{t.usefulversion} in the case of geodesic flow on a $\CAT(-1)$ space is to construct proper families in which the sections are defined geometrically. For these special sections, we can establish the regularity that we need. Our argument relies heavily on geometric arguments which are available for CAT(-1) geodesic flow, but do not apply to general metric Anosov flows.

%

\section{Geometric rectangles and H\"older properties}\label{sec:good rectangles}

\subsection{Geometric rectangles} \label{subsec:geom rect}

In this section, we define geometric rectangles which can be built in $G\tilde X$ for any $\CAT(-1)$ space $\tilde X$.

\begin{defn}\label{def:geom rectangle}
Let $U^+$ and $U^-$ be disjoint open sets in $\partial_\infty \tilde X$. Let $T \subset \tilde X$ be a transversal on $\tilde X$ to the geodesics between $U^-$ and $U^+$ -- that is, a set $T$ so any geodesic $c$ with $c(\infty)\in U^+$ and $c(-\infty) \in U^-$ intersects $T$ exactly once. Let $R(T, U^+, U^-)$ be the set of all geodesics $c$  with $c(\infty)\in U^+$ and $c(-\infty) \in U^-$ and which are parametrized so that $c(0)\in T$. If $R(T,U^+,U^-)$ is a section to the geodesic flow on $G \tilde X$, we call $R(T,U^+,U^-)$ a \emph{geometric rectangle for $G\tilde X$}. Any sufficiently small geometric rectangle in $G\tilde X$ projects bijectively to $GX$, and this defines a \emph{geometric rectangle for $GX$}.
\end{defn}

\begin{figure}[h]
\begin{tikzpicture}

\draw (0,0) circle (3.5);

\node at (0,4) {$U^+$};
\node at (0,-4) {$U^-$};

\draw (-1,-3.35) arc (-19.7:19.7:10cm);
\draw (1,3.35) arc (160.3:199.7:10cm);

\draw[ultra thick, red] (1.75,3.05) arc (60:120:3.5cm);
\draw[ultra thick, red] (-1.75,-3.05) arc (240:300:3.5cm);

\draw [thick, ->] (-.43,0) -- (-.43,.3) ;
\draw [thick, ->] (.43,0) -- (.43,.3) ;
\draw [thick, ->] (0,0) -- (-.1,.3) ;

\draw[ultra thick] (-.5,0) -- (.5,0);

\node at (.8,0){$T$};

\node at (-1,2.7){$c'$};
\node at (1,2.7){$c$};

\draw (-1,3.35) --(1,-3.35);

\node at (.2,-1.3){$d$};

\end{tikzpicture}
\caption{Illustrating Definition \ref{def:geom rectangle}. The arrows mark the basepoint and direction for each geodesic in $R(T,U^+,U^-)$.}\label{fig:geom rectangle}
\end{figure}
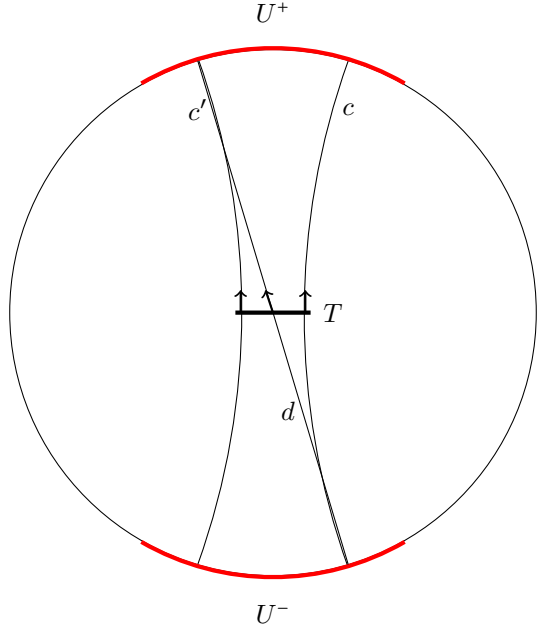
If $c, c' \in R(T,U^+,U^-)$, then $\Proj_T \langle c, c'\rangle $ is the geodesic $d$ which connects the backward endpoint of $c$ to the forwards endpoint of $c'$, with $d(0)\in T$, and thus $R(T,U^+,U^-)$ is a rectangle in the sense of Definition \ref{def:rectangle} (see Figure \ref{fig:geom rectangle}). In the case when $X$ is convex cocompact, we observe that $R(T, U^+, U^-)\cap \hat G\tilde X$ is still a rectangle. This is because membership of $\hat G\tilde X$ is determined by whether the endpoints of a geodesic lie in $\Lambda \subset \partial^\infty\tilde X$, and thus if $c, c'\in \hat G\tilde X$, then $\langle c,c'\rangle \in \hat G\tilde X$. We keep the notation $R(T, U^+, U^-)$ for rectangles in $\hat G \tilde X$. Although this is formally a slight abuse of notation, using the same notation for rectangles in $G \tilde X$ and $\hat G \tilde X$ simplifies notation and will not cause any issues.

To build rectangles we need to specify the sets $U^+$ and $U^-$ and choose our transversals. We do so in the following definition.

Fix a parameter $\tau >>1$. Let $c\in G\tilde X$. Let $B_1 = B_{d_{\tilde X}}(c(-\tau),1)$ and $B_2 = B_{d_{\tilde X}}(c(\tau),1)$ be the open balls of $d_{\tilde X}$-radius 1 around $c(\pm\tau)$. Let 
\[ \gamma(c, \tau) = \{ c'\in GX : c' \cap B_i \neq \emptyset \mbox{ for } i=1,2 \}. \]
Let 
\[  \partial (c, \tau) = \{ (c'(-\infty), c'(+\infty)) \in \partial^\infty \tilde X  \times  \partial^\infty \tilde X  : c'\in \gamma(c,\tau)  \}.\]
It is easy to check that $\partial (c, \tau)$ is open in the product topology on $\partial^\infty \tilde X \times \partial^\infty \tilde X$. Then we may find open sets $U^-$ and $U^+$ such that $(c(-\infty), c(+\infty)) \in U^- \times U^+ \subset \partial (c, \tau)$.

\begin{defn}\label{def:good rectangle}
Let $c\in G\tilde X$ and $\tau >>1$. Let $U^-$ and $U^+$ satisfy $U^- \times U^+ \subset \partial (c, \tau)$. The \emph{good rectangle}  $R(c, \tau; U^-, U^+)$ is the set of all $\eta \in G \tilde X$ which satisfy:
\begin{enumerate}
	\item $\eta(-\infty) \in U^-$ and $\eta(+\infty) \in U^+$,
	\item $B_{c}(\eta(0)) = 0$,
	\item If $\eta(t_1)\in B_1$ and $\eta(t_2)\in B_2$, then $t_1<0<t_2$.
\end{enumerate}
In the convex cocompact case, in addition we take the intersection of all such geodesics with $\hat G\tilde X$. To remove arbitrariness in the choice of $U^-, U^+$, we can let $\delta>0$ be the biggest value so that if $U^-_\delta = B_\infty(c(+\infty), \delta)$ and $U^+_\delta = B_\infty(c(-\infty), \delta)$, then $U^-_\delta \times U^+_\delta \subset \partial(c, \tau)$. We can set $R(c, \tau)= R(c, \tau; U^-_\delta, U^+_\delta)$. 
\end{defn}

In other words, for \emph{good} rectangles, we take as our transversal $T$ on $X$ a suitably sized disc in the horocycle  based at $c(+\infty)$ through $c(0)$ (see Figure \ref{fig:good rectangle}).

We will usually consider the `maximal' good rectangle $R(c, \tau)$. However, we note that the definition makes sense for any $V^- \times V^+ \subset \partial (c, \tau)$. In particular, it is not required that the geodesic $c$ itself (which defines the horocycle that specifies the parameterization of the geodesics) be contained in $R(c, \tau; V^-, V^+)$.

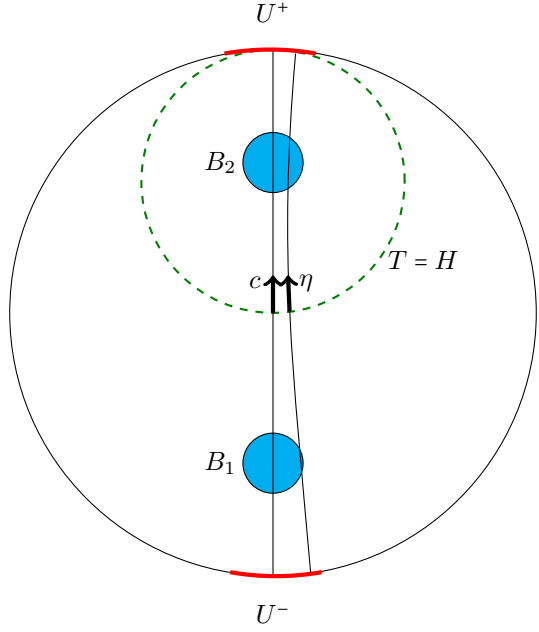
\begin{figure}[h]
\begin{tikzpicture}

\draw (0,0) circle (3.5);

\draw [fill=cyan] (0,2) circle (.4);
\draw [fill=cyan] (0,-2) circle (.4);

\draw[thick, dashed, green!50!black] (0,1.75) circle(1.75);

\node at (0,4) {$U^+$};
\node at (0,-4) {$U^-$};

\draw (0,-3.5) -- (0,3.5);

\draw[ultra thick, red] (.57,3.45) arc (80:100:3.5cm);
\draw[ultra thick, red] (-.57,-3.45) arc (260:280:3.5cm);

\draw [ultra thick,->] (0,0) -- (0,.5);

\draw  (.5,-3.45) to [out=95,in=265] (.3,3.450);

\draw [ultra thick,->] (0.22,0.02) -- (0.2,.5);

\node at (-.24,.4){$c$};
\node at (.44,.4){$\eta$};
\node at (2,.7){$T=H$};
\node at (-.7,2){$B_2$};
\node at (-.7,-2){$B_1$};

\end{tikzpicture}
\caption{A geodesic $\eta\in R(c, \tau ; U^-,U^+)$ as in Definition \ref{def:good rectangle}.}\label{fig:good rectangle}
\end{figure}

To justify this definition, we must verify that $R(c, \tau; U^-, U^+)$ is in fact a rectangle in the sense of Definition \ref{def:geom rectangle}. That is, we need to prove the following two lemmas:

\begin{lem} \label{sectionproof}
For any $\eta \in G\tilde X$ with $\eta(-\infty) \in U^-$ and $\eta(+\infty) \in U^+$, there is exactly one point $p \in \eta$ such that $B_{c}(p)=0$ and such that $p$ lies between $\eta$'s intersections with $B_1$ and $B_2$.
\end{lem}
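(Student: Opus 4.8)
The plan is to prove existence and uniqueness separately, exploiting the two structural facts about Busemann functions recalled earlier: any Busemann function is $1$-Lipschitz, and on a $\CAT(-1)$ space $t \mapsto B_c(\eta(t), \xi)$ is convex for every geodesic $\eta$. Fix $\eta$ with $\eta(-\infty) \in U^-$ and $\eta(+\infty) \in U^+$. Since $U^- \times U^+ \subset \partial(c,\tau)$, the pair of endpoints of $\eta$ is realized by some geodesic in $\gamma(c,\tau)$, which by definition meets both $B_1 = B_{d_{\tilde X}}(c(-\tau),1)$ and $B_2 = B_{d_{\tilde X}}(c(\tau),1)$; because a geodesic is determined up to reparametrization by its endpoints, $\eta$ itself meets $B_1$ and $B_2$. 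Let $t_1 < t_2$ be (any choice of) parameters with $\eta(t_1) \in B_1$, $\eta(t_2) \in B_2$ — note $t_1 < t_2$ is forced since $\eta$ runs from $U^-$ to $U^+$ and $\tau \gg 1$, so $B_1$ is encountered before $B_2$; this will need a short argument from the triangle inequality, using that $d_{\tilde X}(c(-\tau), c(\tau)) = 2\tau$ is much larger than the radius-$1$ balls.

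For \emph{existence}: I would show $B_c(\eta(t_1)) < 0 < B_c(\eta(t_2))$ and invoke the intermediate value theorem on the continuous function $t \mapsto B_c(\eta(t))$. To see $B_c(\eta(t_2)) > 0$: the horosphere $B_c = 0$ passes through $c(0)$, and $\eta(t_2)$ is within distance $1$ of $c(\tau)$, which has $B_c(c(\tau)) = \tau$ (Busemann functions decrease at unit speed along their defining ray in the forward direction; more precisely $B_c(c(s)) = -s$ or $+s$ depending on the sign convention — one must pin this down, but $|B_c(c(\tau))| = \tau$ in any case). By $1$-Lipschitz-ness, $B_c(\eta(t_2)) \geq B_c(c(\tau)) - 1 = \tau - 1 > 0$ once $\tau > 1$. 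Symmetrically, $\eta(t_1)$ is within distance $1$ of $c(-\tau)$, where $B_c = -\tau$, so $B_c(\eta(t_1)) \leq -\tau + 1 < 0$. Hence some $p = \eta(t_0)$ with $t_1 < t_0 < t_2$ has $B_c(p) = 0$, and this $p$ lies between the intersections with $B_1$ and $B_2$ as required.

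For \emph{uniqueness}: this is where convexity does the work, and I expect it to be the main obstacle — the subtlety is that a convex function can be constant on an interval, so vanishing at two parameters need not give a contradiction directly. The function $f(t) = B_c(\eta(t))$ is convex, $f(t_1) \leq -\tau+1 < 0$, and $f(t_2) \geq \tau - 1 > 0$. A convex function that is negative at $t_1$ and positive at $t_2 > t_1$ is strictly increasing at its zero and cannot return to $0$; more carefully, if $f(t_0) = 0$ for some $t_0 \in (t_1, t_2)$ then convexity forces $f(t) > 0$ for all $t > t_0$ (otherwise $f$ would lie above the chord through $(t_0, 0)$ and a later point, contradicting $f(t_2) > 0$ unless... — one writes out the standard three-chord inequality) and $f(t) < 0$ for all $t < t_0$. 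So $f$ has a unique zero on all of $\R$, in particular a unique zero in $(t_1, t_2)$, and this zero is the point $p$. The only genuinely careful steps are (i) fixing the sign convention so that $B_c(c(\tau)) = \tau$ and $B_c(c(-\tau)) = -\tau$ — this follows from unwinding the definition $q \mapsto \lim_{s\to\infty} d_{\tilde X}(q, c'(s)) - s$ where $c'$ is the ray from $c(0)$ to $c(+\infty)$, together with convexity/$1$-Lipschitz-ness to handle the negative-time values — and (ii) the convexity argument for uniqueness, which should be stated as a short self-contained lemma about convex functions changing sign. Everything else is the triangle inequality and continuity.
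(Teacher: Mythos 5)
Your approach is the same as the paper's: verify that $B_c$ takes opposite signs at $\eta(t_1)\in B_1$ and $\eta(t_2)\in B_2$, get existence of a zero by continuity (intermediate value theorem), and get uniqueness by convexity of $t\mapsto B_c(\eta(t))$. You spell out the details more than the paper's two-sentence proof, but nothing is structurally different.

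One concrete issue, which you yourself flag: with the paper's convention $B_p(q,\xi)=\lim_{t\to\infty} d(q,c(t))-t$, one has $B_c(c(s))=-s$, so $B_c$ is \emph{positive} near $c(-\tau)$ and \emph{negative} near $c(\tau)$. Your displayed inequalities $B_c(\eta(t_1))\le -\tau+1<0$ and $B_c(\eta(t_2))\ge\tau-1>0$ are therefore reversed (the paper has the opposite signs), and your aside claiming $B_c(c(\tau))=\tau$ is inconsistent with your own remark that Busemann functions decrease in the forward direction. This is purely a sign flip and does not affect the IVT--plus--convexity argument, which is symmetric under $f\mapsto -f$, but it should be corrected for the proof to read consistently.
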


\begin{proof}
We have $B_{c(0)}(\eta(t_1),c(+\infty))>0$ when $\eta(t_1) \in B_1$ and $B_{c(0)}(\eta(t_2),c(+\infty))<0$ when $\eta(t_2) \in B_2$. Continuity and convexity of the Busemann function implies that there is a unique $t^*\in (t_1, t_2)$ such that $B_{c(0)}(\eta(t^*),c(+\infty))=0$. Let $p=\eta(t^*)$.  
\end{proof}
\begin{lem}
$R(c, \tau; U^-, U^+)$  is a section.
\end{lem}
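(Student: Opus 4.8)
The goal is to show that $R = R(c,\tau;U^-,U^+)$ is a section in the sense of Definition \ref{defn:section}: a closed set with a $\xi > 0$ for which $(z,t)\mapsto \phi_t z$ is a homeomorphism from $R\times[-\xi,\xi]$ onto $\phi_{[-\xi,\xi]}R$. Using the equivalent characterization from \cite{BW} recalled after Definition \ref{defn:section}, it suffices to verify that $R$ is closed and that there is $\xi>0$ with $R \cap g_{[-\xi,\xi]}\eta = \{\eta\}$ for every $\eta \in R$.

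\textbf{Step 1: Closedness.} First I would check that $R$ is closed in $G\tilde X$ (and then that its image in $GX$, or its intersection with $\hat G\tilde X$, is closed). Condition (1) of Definition \ref{def:good rectangle} uses the \emph{open} sets $U^\pm$, so to get a closed set one should think of $R$ as cut out by the closed conditions (2) $B_c(\eta(0)) = 0$ and (3), together with membership of endpoints in $\overline{U^-}\times\overline{U^+}$ — here one uses that the maximal choice $U^\pm_\delta$ are metric balls whose closures still satisfy $\overline{U^-_\delta}\times\overline{U^+_\delta}\subset\partial(c,\tau)$ (shrinking $\delta$ infinitesimally if needed, or observing that $\partial(c,\tau)$ being open lets us absorb the closure), so no geodesics with endpoints on the boundary spheres are "lost" from the transversal picture. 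Continuity of $\eta\mapsto\eta(0)$, of the Busemann function $B_c$, and of the endpoint maps $\eta\mapsto\eta(\pm\infty)$ in the topology on $G\tilde X$ (which agrees with the cone topology on endpoints, as recalled in \S\ref{subsec:CAT (-1)}) then gives closedness. The balls $B_1,B_2$ are open, but condition (3) can equivalently be phrased using the closed horoball conditions as in the proof of Lemma \ref{sectionproof} ($B_c(\eta(t))\ge 0$ for $t\le 0$ near the $B_1$ crossing, etc.), which behave well under limits.

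\textbf{Step 2: The transversality estimate.} The substantive point is to produce a uniform $\xi>0$ such that no geodesic in $R$ meets any flow-translate $g_s\eta$, $0 < |s| \le \xi$, with $g_s\eta \in R$. Suppose $\eta \in R$ and $g_s\eta \in R$ for some small $s \neq 0$. Both $\eta$ and $g_s\eta$ have $B_c$-value $0$ at their basepoints; but $g_s\eta$ is just $\eta$ reparametrized, so its basepoint is $\eta(s)$, and convexity of $t\mapsto B_c(\eta(t))$ together with $B_c(\eta(0))=0$ forces $B_c(\eta(s))=0$ as well. Since the Busemann function is $1$-Lipschitz and strictly decreasing along $\eta$ as $\eta$ travels toward $c(+\infty)$ — and by the $\CAT(-1)$ comparison (the argument underlying Lemma \ref{sectionproof} and equation \eqref{eqn:contract}) the decrease is genuinely strict at a rate bounded below once $\eta$ is known to cross from $B_1$ to $B_2$, i.e. once $\eta\in\gamma(c,\tau)$ — a crossing of two distinct zeros of $B_c\circ\eta$ is impossible for $0<|s|\le\xi$ with $\xi$ chosen small, uniformly over $R$ (uniformity comes from the fixed parameter $\tau$ and the fixed balls of radius $1$). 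This rules out $g_s\eta\in R$, so $R\cap g_{[-\xi,\xi]}\eta = \{\eta\}$.

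\textbf{Step 3: Conclude.} Having (i) $R$ closed and (ii) the $\{\eta\}$-intersection property at scale $\xi$, the equivalence from \cite[\S5]{BW} gives that $R$ is a section, and that $(z,t)\mapsto g_t z$ is the required homeomorphism from $R\times[-\xi,\xi]$ onto $g_{[-\xi,\xi]}R$; in the convex cocompact case one intersects with $\hat G\tilde X$ (a closed flow-invariant set) and passes to the quotient $GX$ (resp. $\hat GX$), using that a sufficiently small good rectangle injects there as noted in Definition \ref{def:geom rectangle}. \textbf{The main obstacle} I anticipate is Step 2: namely pinning down that the zero of $B_c$ along $\eta$ is not merely unique (that is Lemma \ref{sectionproof}) but \emph{isolated with a gap uniform in $\eta\in R$} — i.e. converting the strict convexity/comparison estimate into a quantitative lower bound on $|s|$ for a second zero to appear, valid for all geodesics crossing the two unit balls $B_1, B_2$. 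This is where the $\CAT(-1)$ hypothesis and the fixed geometry ($\tau$, radius-$1$ balls) are essential, and it is the only place the argument is not purely formal.
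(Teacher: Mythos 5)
Your argument follows the paper's own approach — the published proof is simply the one-sentence remark that openness of $U^\pm$ and the $1$-Lipschitz property of Busemann functions are the key facts, and you have fleshed out what that means: the transversality/injectivity property is exactly the uniqueness of the zero of $B_c\circ\eta$ established in Lemma \ref{sectionproof}. Two small points are worth noting. In Step 2 the clause ``convexity \dots\ together with $B_c(\eta(0))=0$ forces $B_c(\eta(s))=0$'' is stated backward: $B_c(\eta(s))=0$ is forced by condition (2) of Definition \ref{def:good rectangle} applied to $g_s\eta$, not by convexity; the role of convexity (via Lemma \ref{sectionproof}) is to rule this out for $s\neq 0$. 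Also, the ``main obstacle'' you flag — upgrading uniqueness to a gap uniform in $\eta$ — is already resolved by Lemmas \ref{sectionproof} and \ref{l.distance} without any further quantitative convexity estimate: the unique zero lies in the interval $(t_1,t_2)$ between the $B_1$- and $B_2$-crossings, and the distance estimates of Lemma \ref{l.distance} give $t_1<-(\tau-3)$ and $t_2>\tau-3$ uniformly, so any $\xi<\tau-3$ serves. Your Step 1 remark about closedness is a fair observation about the paper's bookkeeping (open $U^\pm$ make $R$ non-closed as written); the paper sidesteps this by subsequently working with closed rectangular subsets (cf.\ Proposition \ref{prop:good mpf}), and your proposal of taking $\overline{U^\pm}$ with $\overline{U^-}\times\overline{U^+}\subset\partial(c,\tau)$ is the natural way to make a genuinely closed section.
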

\begin{proof}
The openness of $U^-$ and $U^+$, and the $1$-Lipschitz property of Busemann functions are the key facts.
\end{proof}

We give the following distance estimates for geodesics in a rectangle.

\begin{lem} \label{l.distance}
For all $\eta \in R(c, \tau; U^-, U^+)$, we have
\begin{enumerate}
	\item $d_{\tilde X}(c(0), \eta(0))\leq 2$;
	\item $d_{\tilde X}(c( \pm \tau), \eta(\pm \tau))<4$.
\end{enumerate}
\end{lem}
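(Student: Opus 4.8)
The plan is to exploit the defining conditions of the good rectangle $R(c,\tau;U^-,U^+)$ together with the convexity and $1$-Lipschitz properties of Busemann functions. Fix $\eta\in R(c,\tau;U^-,U^+)$. By condition (3) of Definition \ref{def:good rectangle}, $\eta$ meets $B_1=B_{d_{\tilde X}}(c(-\tau),1)$ at some parameter $t_1<0$ and meets $B_2=B_{d_{\tilde X}}(c(\tau),1)$ at some parameter $t_2>0$. Write $p_1=\eta(t_1)\in B_1$ and $p_2=\eta(t_2)\in B_2$, so $d_{\tilde X}(p_1,c(-\tau))<1$ and $d_{\tilde X}(p_2,c(\tau))<1$.

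\textbf{Step 1: the estimate $d_{\tilde X}(c(0),\eta(0))\leq 2$.} The point $\eta(0)$ lies on the segment of $\eta$ between $p_1$ and $p_2$, and by condition (2) it satisfies $B_c(\eta(0))=0$, i.e.\ $\eta(0)$ lies on the horosphere through $c(0)$ centered at $c(+\infty)$. Since $B_c(c(0))=0$ as well, I would estimate $d_{\tilde X}(c(0),\eta(0))$ by using that both points lie on the same horosphere: a standard $\CAT(-1)$ comparison shows that if two points $x,y$ lie on a common horosphere centered at $\xi$ and a geodesic through $y$ with one endpoint $\xi$ also passes within distance $1$ of $c(-\tau)$ and $c(\tau)$, then $x$ and $y$ are close. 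Concretely, $c(0)$ and $\eta(0)$ both lie within distance roughly $1$ of the ``waist'' of the configuration near $c(0)$: since $d_{\tilde X}(c(-\tau),p_1)<1$ and $d_{\tilde X}(c(\tau),p_2)<1$, the geodesics $c$ and $\eta$ fellow-travel, and the $1$-Lipschitz property of $B_c$ together with convexity forces $\eta(0)$ to be within $2$ of $c(0)$. I would make this precise by a comparison triangle/quadrilateral argument in $\mathbb H^2$ with vertices near $c(\pm\tau)$ and the endpoints at infinity, using that the crossing points are within distance $1$ of $c(\pm\tau)$; the bound $2$ comes from adding the two unit radii (or a single application of thinness plus the horosphere condition).

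\textbf{Step 2: the estimate $d_{\tilde X}(c(\pm\tau),\eta(\pm\tau))<4$.} Here I would use part (1) together with the fact that $d_{\tilde X}(c(0),p_1)$ and $d_{\tilde X}(c(0),\eta(t_1))$ are both close to $\tau$. First note $d_{\tilde X}(c(0),c(-\tau))=\tau$ and $d_{\tilde X}(p_1,c(-\tau))<1$, so $d_{\tilde X}(c(0),p_1)\in(\tau-1,\tau+1)$ by the triangle inequality; since $p_1=\eta(t_1)$ and $\eta$ is a geodesic, $|t_1|=d_{\tilde X}(\eta(0),p_1)\leq d_{\tilde X}(\eta(0),c(0))+d_{\tilde X}(c(0),p_1)< 2+\tau+1=\tau+3$, and similarly $|t_1|> \tau-1-2=\tau-3$. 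Then $\eta(-\tau)$ is within $|{-\tau}-t_1|<3$ of $p_1$ along $\eta$ (so $d_{\tilde X}(\eta(-\tau),p_1)<3$), hence
\[
d_{\tilde X}(c(-\tau),\eta(-\tau))\leq d_{\tilde X}(c(-\tau),p_1)+d_{\tilde X}(p_1,\eta(-\tau))<1+3=4.
\]
The estimate at $+\tau$ is symmetric, using $B_2$ and $p_2$ in place of $B_1$ and $p_1$.

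\textbf{Main obstacle.} The routine part is Step 2, which is just bookkeeping with the triangle inequality once Step 1 is in hand. The genuine content is Step 1: justifying $d_{\tilde X}(c(0),\eta(0))\leq 2$ cleanly. The subtlety is that ``two points on a common horosphere'' alone gives no bound — one needs to use that $\eta$ is pinned near $c(\pm\tau)$ via $B_1,B_2$ to control how far along the horosphere $\eta(0)$ can drift. I expect the cleanest route is a $\CAT(-1)$ comparison argument: form the comparison configuration in $\mathbb H^2$ for the geodesic quadrilateral/triangle determined by $c(-\tau)$, $c(\tau)$ (or the ideal points) and the near-points $p_1,p_2$ on $\eta$, use $\delta$-thinness to place $\eta(0)$ near the segment $c$, and then use that $\eta(0)$ is on the horosphere $B_c=0$ to pin its position to within the unit radii involved, yielding the bound $2$. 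If a fully self-contained comparison is awkward, one can instead invoke the fellow-traveling property of geodesics with nearby endpoints in $\CAT(-1)$ spaces (Theorem \ref{expclose}-style exponential closeness) to localize $\eta(0)$ near $c$, then close the gap with the $1$-Lipschitz horosphere condition.
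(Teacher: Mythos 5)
Your Step 2 is exactly the paper's argument (done at $-\tau$ instead of $+\tau$) and is correct. Your Step 1 has the right intuition but is left as a sketch, and the route you gesture at (a comparison quadrilateral in $\mathbb{H}^2$, or $\delta$-thinness) is both heavier than necessary and not obviously going to produce the constant $2$ — thinness constants are not $1$, so "adding the two unit radii" does not close the argument as stated.

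The paper's Step 1 is cleaner and avoids any explicit comparison configuration. The two endpoint conditions say $d_{\tilde X}(\eta(t^-),c(-\tau))<1$ and $d_{\tilde X}(\eta(t^+),c(\tau))<1$ with $t^-<0<t^+$. Since in a $\CAT(0)$ (hence $\CAT(-1)$) space the function $t\mapsto d_{\tilde X}(\eta(t),c)$ is convex, and it is $<1$ at both $t^-$ and $t^+$, convexity alone gives $d_{\tilde X}(\eta(0),c)<1$; so $d_{\tilde X}(\eta(0),c(t^*))<1$ for some $t^*$. Now the Busemann function enters: since $B_c$ is $1$-Lipschitz and $B_c(\eta(0))=0$, we get $|B_c(c(t^*))|=|B_c(c(t^*))-B_c(\eta(0))|\le d_{\tilde X}(c(t^*),\eta(0))<1$, and $|B_c(c(t^*))|=|t^*|$ because $B_c$ restricted to $c$ is just $-t$. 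Hence $|t^*|<1$ and $d_{\tilde X}(c(0),\eta(0))\le |t^*|+1<2$. The two ingredients you should flag explicitly are (a) convexity of distance to a geodesic in $\CAT(0)$, which directly pins $\eta(0)$ within $1$ of the geodesic $c$ with no thinness or comparison picture needed, and (b) the observation $|B_c(c(t^*))|=|t^*|$, which converts the horosphere condition into a bound on how far along $c$ the closest point can be. Without (b) made precise, "the horosphere condition pins the position" is not yet a proof. As written, Step 1 of your proposal is a gap; filled in with (a) and (b), it becomes the paper's proof.
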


\begin{proof}

First, we prove (1). By the definition of the rectangle, we know that there exist times $t^+>0$ and $t^-<0$ so that $d_{\tilde X}(c(\tau), \eta(t^+))<1$ and $d_{\tilde X}(c(-\tau), \eta(t^-))<1$. Since the distance between two geodesic segments is maximized at one of the endpoints, we know that $d_{\tilde X}(\eta(0), c)<1$. Thus, there exists $t^\ast$ so that $d_{\tilde X}(\eta(0), c(t^\ast))<1$. Thus, $d_{\tilde X}(c(0), \eta(0)) \leq d_{\tilde X}(c(0), c(t^\ast))+ d_{\tilde X}(\eta(0), c(t^\ast)) < |t^\ast|+1$.

Since the Busemann function is $1$-Lipschitz,
\[|B_c(c(t^\ast))| = |B_c(c(t^\ast))- B_c(\eta(0))| \leq d_{\tilde X}(c(t^\ast), \eta(0)) <1.\]
Since $|B_c(c(t^\ast))| = |t^\ast|$, it follows that $|t^\ast|<1$. Thus, $d_{\tilde X}(c(0), \eta(0))<2$.

We use (1) to prove (2). Observe that $t^+\leq \tau+3$. This is because
\begin{align*}
	t^+=d_{\tilde X}(\eta(0), \eta(t^+)) &\leq d_{\tilde X}(\eta(0), c(0))+ d_{\tilde X}(c(0), c(\tau)) + d_{\tilde X}(c(\tau), \eta(t^+)) \\ 
			& \leq 2+\tau +1.
\end{align*}
We also see that $t^+\geq \tau-3$. This is because
\begin{align*}
	\tau = d_{\tilde X}(c(0), c(\tau)) & \leq d_{\tilde X}(c(0), \eta(0))+d_{\tilde X}(\eta(0), \eta(t^+))+d_{\tilde X}(\eta(t^+), c(\tau)) \\
			& \leq 2 + t^+ +1.
\end{align*}
Thus $|\tau-t^+| <3$. It follows that
\[d_{\tilde X}(c(\tau), \eta(\tau)) \leq d_{\tilde X}(c (\tau), \eta(t^+))+ d_{\tilde X}(\eta(\tau), \eta(t^+))< 1+ 3 = 4.\]
The argument that $d_{\tilde X}(c(-\tau), \eta(-\tau))<4$ is analogous.
\end{proof}

We obtain linear bounds on the Busemann function for $\eta \in R(c, \tau; U^-,U^+)$.

\begin{lem}\label{lem:linear}
For all $\eta \in R(c, \tau; U^-, U^+)$,
\[   -t \leq B_{c(0)}(\eta(t),c(+\infty)) \leq  - \frac{t}{2} \mbox{ for all } 0\leq t<\tau   \]
and
\[   -\frac{t}{2} \leq B_{c(0)}(\eta(t),c(+\infty)) \leq  - t \mbox{ for all } -\tau <  t \leq 0.   \]
That is, for times between $-\tau$ and $\tau$, the values of the Busemann function along $\eta$ lie between $-t$ and $-\frac{t}{2}$.
\end{lem}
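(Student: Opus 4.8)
\emph{Proof strategy.} Set $f(t) := B_{c(0)}(\eta(t), c(+\infty))$, the restriction of the relevant Busemann function to the geodesic $\eta$; the assertion is precisely that $f$ stays between the lines $t \mapsto -t$ and $t \mapsto -t/2$ on $(-\tau,\tau)$. The plan is to use only two features of $f$, both already available: it is $1$-Lipschitz (Busemann functions are $1$-Lipschitz and $\eta$ has unit speed), and it is convex (Busemann functions on $\CAT(-1)$ spaces are convex along geodesics, as recalled above). In addition, $f(0)=0$ by condition (2) in Definition \ref{def:good rectangle}.

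First I would dispatch the two ``outer'' inequalities using the Lipschitz bound alone. Since $f(0)=0$, we have $|f(t)| = |f(t)-f(0)| \le d_{\tilde X}(\eta(t),\eta(0)) = |t|$, which immediately gives $f(t)\ge -t$ for $0\le t<\tau$ and $f(t)\le -t$ for $-\tau<t\le 0$.

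Next I would obtain the two ``inner'' inequalities from convexity together with a single pinning estimate at the endpoint $\tau$. Since $t\mapsto c(t)$, $t\ge 0$, is the defining ray of $B_c$, we have $B_c(c(\tau)) = \lim_{s\to\infty}\big(d_{\tilde X}(c(\tau),c(s)) - s\big) = -\tau$; combining this with $d_{\tilde X}(c(\tau),\eta(\tau)) < 4$ from Lemma \ref{l.distance}(2) and the $1$-Lipschitz property of $B_c$ yields $f(\tau) < -\tau + 4 \le -\tau/2$, the last step using that $\tau$ is large (it suffices that $\tau \ge 8$). Now, for a convex function the difference quotient $t \mapsto (f(t)-f(0))/t = f(t)/t$ is non-decreasing on $\mathbb{R}\setminus\{0\}$, so for every $t\in(-\tau,\tau)$ with $t\neq 0$,
\[
\frac{f(t)}{t} \;\le\; \frac{f(\tau)}{\tau} \;<\; -\tfrac{1}{2}.
\]
Reading this off according to the sign of $t$ gives $f(t)\le -t/2$ for $0<t<\tau$ and $f(t)\ge -t/2$ for $-\tau<t<0$, the case $t=0$ being the trivial equality $f(0)=0$. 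Together with the previous paragraph this is exactly the claimed two-sided estimate.

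I do not expect a substantive obstacle: the one genuinely geometric input, namely that a geodesic of the rectangle stays within distance $4$ of $c(\pm\tau)$, is Lemma \ref{l.distance}, already established, and the remainder is just the elementary behavior of a $1$-Lipschitz convex function pinned at $0$ and at $\tau$. The only points requiring a little care are the sign bookkeeping when dividing the chord-slope inequality by $t<0$, and making explicit that the standing assumption $\tau\gg 1$ is used exactly to absorb the additive constant from Lemma \ref{l.distance} into $\tau/2$, so that the endpoint value $f(\tau)$ lies below the comparison line $-\tau/2$.
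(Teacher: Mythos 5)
Your proof is correct and uses the same inputs as the paper's proof (the $1$-Lipschitz property for the outer bounds, and convexity together with the endpoint pinning from Lemma~\ref{l.distance} for the inner bounds). The only difference is cosmetic: the paper phrases the inner-bound argument as a proof by contradiction, while you package the same convexity input directly as monotonicity of the secant slopes $t\mapsto f(t)/t$, pinned by the estimate $f(\tau)/\tau < -1/2$.
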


\begin{proof}

That $-t \leq B_{c}(\eta(t))$ follows immediately from the 1-Lipschitz property of Busemann functions. By Lemma \ref{l.distance} and the 1-Lipschitz property of Busemann functions, $|B_{c}(\eta(\tau)) + \tau | = | B_{c}(\eta(\tau)) - B_{c}(c(\tau)) |  < 4$, and similarly $|B_{c}(\eta(\tau)) - \tau | = | B_{c}(\eta(-\tau)) - B_{c}(c(-\tau)) |  < 4$. Therefore, $f(t) = B_{c}(\eta(t))$ is a convex function with $f(-\tau)\in (\tau-4,\tau]$, $f(0)=0$ and $f(\tau) \in [-\tau,-(\tau-4))$. Then if for some $t_0 \in (-\tau,0]$, $f(t_0) < -\frac{t_0}{2}$, or for some $t_0 \in [0,\tau)$, $f(t_0)>-\frac{t_0}{2}$, then for all $t>\max\{0,t_0\}$, by convexity, $f(t) > -t/2$. But then $f(\tau) > -\frac{\tau}{2}$, a contradiction since $\tau>>1$. 
\end{proof}

The proof actually yields the upper bound of $B_{c}(\eta(t)) \leq  - \frac{\tau-4}{\tau} t$ but all we need is some linear bound with non-zero slope.

\begin{lem}\label{lem:deform tool}
Let $R_1$ and $R_2$ be rectangular subsets of good geometric rectangles. Suppose $\diam(R_1) = \epsilon$ and that $R_1 \cap R_2 \neq \emptyset$. Then for $|t|>2L\epsilon$, $g_tR_1 \cap R_2 = \emptyset$, where $L$ is the constant from Lemma \ref{lem:X GX bound}.
\end{lem}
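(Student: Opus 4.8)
The plan is to use the defining geometric feature of a good rectangle $R(c,\tau;U^-,U^+)$, namely that it is \emph{parametrized by the horosphere} $\{B_c=0\}$ (condition (2) of Definition \ref{def:good rectangle}), together with the linear estimate of Lemma \ref{lem:linear}, which gives $|B_c(\eta(t))|\ge |t|/2$ for every $\eta\in R(c,\tau;U^-,U^+)$ and every $|t|<\tau$. Thus flowing a geodesic of such a rectangle by a time $t$ with $0<|t|<\tau$ drives its basepoint a Busemann distance at least $|t|/2$ off the horosphere. The second, and only other, input is Lemma \ref{lem:X GX bound}: it will be used in the form that, since $\diam(R_1)=\epsilon$, any two geodesics of $R_1$ have compatible lifts whose basepoints are within $L\epsilon$ in $\tilde X$.

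Here is the argument I would run. Suppose $\eta_1\in R_1$ satisfies $g_t\eta_1\in R_2$, and pick $\eta_0\in R_1\cap R_2$. Let $R(c_2,\tau_2;U_2^-,U_2^+)\subset G\tilde X$ be the good rectangle whose bijective projection contains $R_2$. Working at a small scale, I choose lifts so that $\tilde\eta_0$ and $g_t\tilde\eta_1$ both lie in $R(c_2,\tau_2;U_2^-,U_2^+)$, while $\tilde\eta_0,\tilde\eta_1$ are the lifts lying in the good rectangle containing $R_1$; then $d_{\tilde X}(\tilde\eta_0(0),\tilde\eta_1(0))\le L\epsilon$. Since $\tilde\eta_0$ and $g_t\tilde\eta_1$ lie in $R(c_2,\tau_2;U_2^-,U_2^+)$, we get $B_{c_2}(\tilde\eta_0(0))=0$ and $B_{c_2}(\tilde\eta_1(t))=B_{c_2}((g_t\tilde\eta_1)(0))=0$. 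Because Busemann functions are $1$-Lipschitz, $|B_{c_2}(\tilde\eta_1(0))|=|B_{c_2}(\tilde\eta_1(0))-B_{c_2}(\tilde\eta_0(0))|\le d_{\tilde X}(\tilde\eta_1(0),\tilde\eta_0(0))\le L\epsilon$. On the other hand, applying Lemma \ref{lem:linear} to $g_t\tilde\eta_1\in R(c_2,\tau_2;U_2^-,U_2^+)$ at parameter $-t$ gives $|B_{c_2}(\tilde\eta_1(0))|=|B_{c_2}((g_t\tilde\eta_1)(-t))|\ge |t|/2$, valid whenever $|t|<\tau_2$. Hence $|t|/2\le L\epsilon$, i.e.\ $|t|\le 2L\epsilon$, and so $g_tR_1\cap R_2=\emptyset$ for $2L\epsilon<|t|<\tau_2$ — which is all that is needed, since in the intended applications $|t|$ is of the order of the small scale $\alpha$, while $\tau_2\gg 1\gg\alpha$.

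The main obstacle is not the estimate but the bookkeeping of lifts: one must check that the lift of $\eta_0$ coming from the lift of $R_1$ agrees with the one coming from the lift of $R_2$, and that $g_t\tilde\eta_1$, rather than a $\Gamma$-translate of it, is the lift of $g_t\eta_1$ landing in the good rectangle for $R_2$. Because all the geometric rectangles involved are small enough to project bijectively to $GX$ (or $\hat GX$) and the flow times in question are far below the injectivity radius of the relevant compact set, these compatibilities are automatic, but this is the one place where a little care is genuinely required; everything else is a one-line estimate. (Equivalently, one can keep the argument downstairs: every basepoint of a geodesic in a good rectangle lies on a single horosphere, and by Lemma \ref{lem:linear} the Busemann function decreases at a definite rate along each such geodesic, so the orbit of $\eta_1$ can meet the horosphere carrying $R_2$ only inside the claimed time window.)
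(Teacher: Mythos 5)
Your proof is correct and takes essentially the same route as the paper's: both arguments single out the Busemann function defining the horosphere of $R_2$, use Lemma \ref{lem:X GX bound} together with the $1$-Lipschitz property of Busemann functions to conclude that every geodesic in $R_1$ has basepoint within Busemann value $L\epsilon$ of that horosphere (anchored by the shared geodesic $\eta_0\in R_1\cap R_2$), and then invoke the linear lower bound of Lemma \ref{lem:linear} to see that a geodesic landing in $R_2$ at time $t$ must have had Busemann value at least $|t|/2$ at time $0$. The only cosmetic difference is that the paper phrases the final step as a contradiction starting from a point $\eta\in g_tR_1\cap R_2$ and pulling back by $g_{-t}$, while you push forward $\eta_1\in R_1$; the paper also works directly downstairs with the descended Busemann function rather than spelling out the lift compatibility as you do, but that bookkeeping is indeed automatic at the scales involved, as you note.
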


\begin{proof}
Let $f$ be the Busemann function used to specify the basepoints of geodesics in $R_2$. Since the diameter of $R_1$ is $\epsilon$, and since for some $\eta\in R_1$, $f(\eta(0))=0$, $|f(c(0))| < L\epsilon$ for all $c\in R_1$. This uses Lemma \ref{lem:X GX bound} and the 1-Lipschitz property of Busemann functions with respect to $d_X$. If $\eta \in R_1 \cap R_2$, then $\frac{1}{2}|t| \leq |f(\eta(t))| \leq |t|$ by Lemma \ref{lem:linear}. Now suppose that $\eta \in g_tR_1 \cap R_2$ for some $|t|>2L\epsilon$. Then $g_{-t}\eta \in R_1$ and we must have $|f(\eta(-t))| > L\epsilon$, which is a contradiction.
\end{proof}

%

\subsection{H\"older properties} 

We are now ready to prove the regularity results we need to apply Theorem \ref{t.usefulversion}. First, we show that return times between geometric rectangles are Lipchitz. Let $R=R(c, \tau; U^+, U^-)$ and $R'=R(c', \tau'; U'^+, U'^-)$ be good geometric rectangles and let $d\in R$ such that $g_{t_0}d \in R'$ for some $t_0$ which is minimal with respect to this property. We write $r(d)=r(d, R, R') :=t_0$; this is the return time for $d$ to $R \cup R'$.

Let us make the standing assumption that all return times are bounded above by $\alpha>0$. Note that  $d\in R$ and $g_{t_0}d \in R'$ iff $d(-\infty)\in U^-\cap U'^-$ and $d(+\infty)\in U^+\cap U'^+$. The key property we want is the following: 

\begin{prop}\label{prop:Lipschitz}
Let $R, R'$ be good rectangles and $Y= R \cap H^{-1}(R')$. Then the return time map $r:(Y, d_{G\tilde X}) \to \mathbb{R}$ is Lipschitz.
\end{prop}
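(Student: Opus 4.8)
The plan is to reduce the Lipschitz estimate for the return time $r$ to the regularity of Busemann functions, exactly as in the distance estimates of \S\ref{subsec:geom rect}. Fix $d, e \in Y = R \cap H^{-1}(R')$, so that $g_{r(d)}d \in R'$ and $g_{r(e)}e \in R'$, with $r(d), r(e)$ small (bounded above by $\alpha$). Write $f' = B_{c'}$ for the Busemann function (with respect to $c'(+\infty)$, basepoint $c'(0)$) that specifies the parametrization of geodesics in $R'$; by definition of a good rectangle, a geodesic $\gamma$ lies in $R'$ with the given parametrization precisely when $f'(\gamma(0)) = 0$ (together with the endpoint conditions, which $d$ and $e$ already satisfy since they return to $R'$). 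Hence $r(d)$ is characterized by $f'\big((g_{r(d)}d)(0)\big) = f'\big(d(r(d))\big) = 0$, and likewise for $e$.

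First I would record the elementary lower bound on the ``speed'' of $f'$ along geodesics that return to $R'$: by Lemma \ref{lem:linear} (applied to $d' := g_{r(d)}d \in R'$, which has $f'(d'(0))=0$), for small $|s|$ we have $\tfrac12|s| \le |f'(d'(s))| \le |s|$, and the same for $e' := g_{r(e)}e$. In particular the map $s \mapsto f'(d'(s))$ is monotone near $0$ with slope bounded below by $\tfrac12$ in absolute value. Now estimate $|f'(d(r(e)))| = |f'(d(r(e))) - f'(d(r(d)))|$: since $d(r(d))$ lies on $d'$ at parameter $0$ and $d(r(e))$ lies on $d'$ at parameter $r(e) - r(d)$, the lower bound gives $|f'(d(r(e)))| \ge \tfrac12|r(d) - r(e)|$. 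On the other hand, $f'(e(r(e))) = 0$, so $|f'(d(r(e)))| = |f'(d(r(e))) - f'(e(r(e)))| \le d_{\tilde X}(d(r(e)), e(r(e)))$ by the $1$-Lipschitz property of Busemann functions with respect to $d_{\tilde X}$ (or $d_X$). Combining, $|r(d) - r(e)| \le 2\, d_{\tilde X}(d(r(e)), e(r(e)))$.

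It then remains to bound $d_{\tilde X}(d(r(e)), e(r(e)))$ by a constant multiple of $d_{G\tilde X}(d,e)$, uniformly over $Y$. By Lemma \ref{lem:Lipschitz}, $d_{G\tilde X}(g_{r(e)}d, g_{r(e)}e) \le e^{2\alpha} d_{G\tilde X}(d, e)$ since return times are bounded by $\alpha$; then Lemma \ref{lem:X GX bound} gives $d_{\tilde X}\big((g_{r(e)}d)(0), (g_{r(e)}e)(0)\big) = d_{\tilde X}(d(r(e)), e(r(e))) \le L\, d_{G\tilde X}(g_{r(e)}d, g_{r(e)}e) \le L e^{2\alpha} d_{G\tilde X}(d,e)$. (Here one should be slightly careful that the lifts realizing the $G\tilde X$-distances are chosen compatibly; since we are working in $R, R' \subset G\tilde X$ this is automatic, and in the convex cocompact case the same argument applies verbatim on $\hat G\tilde X$.) Chaining the two displays yields $|r(d) - r(e)| \le 2 L e^{2\alpha} d_{G\tilde X}(d, e)$, which is the desired Lipschitz bound.

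\textbf{Main obstacle.} The only real subtlety is making precise the claim ``$d(r(e))$ lies on the geodesic $d' = g_{r(d)}d$ at parameter $r(e)-r(d)$'' and ensuring the linear lower bound of Lemma \ref{lem:linear} genuinely applies on the relevant parameter range — i.e.\ that $|r(d)-r(e)|$ is small compared to $\tau'$, which holds because all return times are $\le \alpha \ll \tau'$. One must also confirm that $d$ and $e$ returning to $R'$ forces their endpoints into $U'^- \times U'^+$, so that the Busemann-zero condition alone pins down the return time; this is exactly the observation recorded before the statement ($d \in R$, $g_{t_0}d \in R'$ iff the endpoints lie in the appropriate intersections). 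Everything else is a direct concatenation of Lemmas \ref{lem:X GX bound}, \ref{lem:Lipschitz}, \ref{lem:linear} and the $1$-Lipschitz property of Busemann functions.
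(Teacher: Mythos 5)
Your proposal is correct and takes essentially the same approach as the paper: both hinge on the Busemann function $B_{c'}$ defining $R'$, the linear bounds of Lemma~\ref{lem:linear} applied along a returning geodesic, the $1$-Lipschitz property of Busemann functions, and Lemmas~\ref{lem:X GX bound} and~\ref{lem:Lipschitz} to pass from $d_{G\tilde X}$ to pointwise distance. The paper phrases the localization of the return time as an intermediate-value argument on $g(t)=B_{c'}(w(r(v)+t))$ sandwiched near $f(t)=B_{c'}(v(r(v)+t))$, whereas you estimate $|f'(d(r(e)))|$ directly from below via Lemma~\ref{lem:linear} and from above via $1$-Lipschitzness, which is the same calculation organized slightly more directly.
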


\begin{proof}

Let $v,w \in Y$ with return times $r(v), r(w)$, respectively. Let $\epsilon = d_{GX}(v,w)$. We consider the Busemann function determined by the geodesic $c'$ which defines the rectangle $R'$.

Let $f(t) = B_{c'}(v(r(v)+t))$ and let $g(t) = B_{c'}(w(r(v)+t))$. Then $r(w)-r(v) = t^*$ where $t^*$ is the unique value of $t$ with $|t^\ast|< \alpha$ such that $g(t^\ast) = 0$.  By Lemma \ref{lem:linear}, the graph of $f(t)$ lies between the lines $y=-t$ and $y=-\frac{t}{2}$ for small $t$.

 Let $C = e^\alpha$, where $\alpha$ is an upper bound on the return time. By Lemmas \ref{lem:X GX bound} and \ref{lem:Lipschitz}, $d_X(v(s), w(s))<LC\epsilon$ for all $s<\alpha$, where $C$ is a uniform constant. The 1-Lipschitz property of Busemann functions implies that $|f(t)-g(t)| < LC\epsilon$. 

 Thus, for $t>0$, we have $g(t)\leq f(t)+LC \epsilon \leq -t/2 +LC\epsilon$, and so for $t>2LC \epsilon$, $g(t)<0$. For $t<0$, we have $g(t) \geq f(t)-LC \epsilon \geq -t/2-LC \epsilon$, and so for $t<-2LC \epsilon$, we have $g(t)>0$. Thus, by the intermediate value theorem, the root $g(t^\ast)=0$ satisfies $t^\ast \in (-2LC\epsilon, 2LC \epsilon)$. Therefore, $|r(w)-r(v)| = |t^\ast| < 2LC\epsilon$ proving the desired Lipschitz property with constant $2LC$. 
\end{proof}

We now show that the projection map to a good rectangle is H\"older. Consider any good geometric rectangle $R=R(c,\tau; U^-, U^+)$. Fix some small $\alpha>0$ so that $(-\alpha, \alpha) \times R \to G\tilde X$ by $(t,x) \mapsto g_t x$ is a homeomorphism.

 \begin{prop} \label{projholder}
 $\Proj_R:g_{(-\alpha, \alpha)}R \to R$ is H\"older.
 \end{prop}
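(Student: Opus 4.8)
The plan is to reduce the H\"older regularity of $\Proj_R$ to the distance estimates for good rectangles (Lemma \ref{l.distance}) and the linear bounds on the Busemann function (Lemma \ref{lem:linear}), in a manner parallel to the proof of Proposition \ref{prop:Lipschitz}. The key observation is that if $x \in g_{(-\alpha,\alpha)}R$, then $\Proj_R(x)$ is the unique geodesic on the flow line of $x$ whose basepoint lies on the horocycle $B_c = 0$; equivalently, if we write $x = g_s(\Proj_R x)$, then $s$ is determined by the condition $B_c((\Proj_R x)(0)) = 0$, i.e. by finding the zero of $t \mapsto B_c(x(-t))$. So the entire problem is to control how this zero moves as $x$ varies.

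First I would fix two points $x, y \in g_{(-\alpha,\alpha)}R$ and set $\epsilon = d_{G\tilde X}(x,y)$. Write $x = g_s d$ and $y = g_u e$ with $d = \Proj_R x$, $e = \Proj_R y$, and $|s|, |u| < \alpha$. The goal is to bound $d_{G\tilde X}(d, e)$ in terms of a power of $\epsilon$. Since $d = g_{-s} x$ and $e = g_{-u} y$, by the Lipschitz property of the time-$t$ maps (Lemma \ref{lem:Lipschitz}), it suffices to bound $|s - u|$ by a power of $\epsilon$: then $d_{G\tilde X}(d,e) \le d_{G\tilde X}(g_{-s}x, g_{-s}y) + d_{G\tilde X}(g_{-s}y, g_{-u}y) \le e^{2\alpha}\epsilon + |s-u|$, and the argument is done. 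To bound $|s-u|$, I would consider the function $f(t) = B_c(x(t))$ and $g(t) = B_c(y(t))$, where I abuse notation writing $x(t)$ for the underlying geodesic of the unit-speed flow line through $x$. By construction $f(-s) = 0$ and $g(-u) = 0$, so $|s - u|$ is the distance between the zeros of $f$ and $g$. By Lemma \ref{lem:linear} (applied to the geodesic through $d \in R$, noting $d$ lies in a good rectangle so the linear bounds hold in a window of size $\tau \gg \alpha$ around its basepoint), $f$ has slope bounded away from zero near $0$: $f(t)$ lies between $-t$ and $-t/2$ for $|t| < \tau$. Meanwhile, by Lemma \ref{lem:X GX bound} and Lemma \ref{lem:Lipschitz}, $d_X(x(r), y(r)) \le LC\epsilon$ for all $|r| < \alpha$ where $C = e^{2\alpha}$, and the $1$-Lipschitz property of Busemann functions with respect to $d_X$ gives $|f(t) - g(t)| \le LC\epsilon$ for $|t| < \alpha$. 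Combining: for $t > 2LC\epsilon$, $g(t) \le f(t) + LC\epsilon \le -t/2 + LC\epsilon < 0$, and for $t < -2LC\epsilon$, $g(t) > 0$, so the zero of $g$ lies in $(-2LC\epsilon, 2LC\epsilon)$; since the zero of $f$ is at $-s \in (-2LC\epsilon, 2LC\epsilon)$ by the same logic (or by $f(-s) = 0$ and the slope bound, $|s| \le$ the analogous estimate applied to $f$ itself vs. the line through its own zero — actually $-s$ is literally the zero of $f$), we get $|s - u| < 4LC\epsilon$.

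This shows the stronger statement that $\Proj_R$ is in fact \emph{Lipschitz}, not merely H\"older, with the same kind of argument as Proposition \ref{prop:Lipschitz}. The only subtlety — and the step I expect to need the most care — is making sure the linear Busemann bounds of Lemma \ref{lem:linear} can be invoked: that lemma is stated for geodesics $\eta$ in the good rectangle $R(c,\tau;U^-,U^+)$ at times $|t| < \tau$, and I need it at times comparable to $\alpha$, which is fine provided $\alpha \ll \tau$ (which we are free to arrange, since $\alpha$ is chosen small and $\tau \gg 1$ is a fixed large parameter built into the rectangle). I would also need to confirm that for $x \in g_{(-\alpha,\alpha)}R$ the point $\Proj_R x$ genuinely lies in $R$ so that these lemmas apply to it; this is exactly the content of $R$ being a section at scale $\alpha$, which is part of the setup. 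With these points checked, the estimate $d_{G\tilde X}(\Proj_R x, \Proj_R y) \le (e^{2\alpha} + 4LC)\, d_{G\tilde X}(x,y)$ follows, giving H\"older (indeed Lipschitz) regularity of $\Proj_R$, which is what hypothesis (2) of Theorem \ref{t.usefulversion} requires.
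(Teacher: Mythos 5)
Your argument takes a genuinely different route from the paper's and in fact proves the stronger Lipschitz estimate, whereas the paper only establishes H\"older exponent $1/2$. In the paper the authors set $t = B_c(y(0))$, use Lemma \ref{lem:linear} to get the pointwise lower bound $d_{\tilde X}(x(s),y(s)) \geq t - |s|/2$, integrate only over $|s| < 2t$ (where that lower bound is nonnegative), and obtain $d_{G\tilde X}(x,y) \geq ct^2$, hence $|t| \lesssim d_{G\tilde X}(x,y)^{1/2}$. Your approach instead mirrors the proof of Proposition \ref{prop:Lipschitz}: using that $B_c$ is $1$-Lipschitz on $\tilde X$ together with Lemmas \ref{lem:X GX bound} and \ref{lem:Lipschitz}, you get the uniform bound $|B_c(x(t)) - B_c(y(t))| \leq LC\,\epsilon$ on an $\alpha$-window, and then the definite negative slope supplied by Lemma \ref{lem:linear} traps the zero of $t\mapsto B_c(y(t))$ within distance $O(\epsilon)$ of the zero of $t\mapsto B_c(x(t))$. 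This pins down the flow-time of the projection linearly in $\epsilon$, which is sharper than what the paper's crude truncation of the integral to $|s|<2t$ produces; it also unifies the arguments for the return time and the projection map, which the paper treats by two different methods.

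There is, however, a bookkeeping slip you should repair. Writing $x = g_s d$ with $d \in R$, you have $f(t) = B_c(x(t)) = B_c(d(t+s))$, and Lemma \ref{lem:linear} gives $-(t+s) \leq f(t) \leq -(t+s)/2$, so the zero of $f$ is at $t = -s$, not near $0$, and $f(t)$ is \emph{not} pinched between $-t$ and $-t/2$ unless $s=0$. Your later assertion that $-s \in (-2LC\epsilon, 2LC\epsilon)$ is therefore not justified and is false in general: $s$ records how far $x$ sits along the flow from $R$, which is independent of $\epsilon = d_{G\tilde X}(x,y)$ and can be close to $\alpha$ even when $\epsilon$ is tiny. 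Two equally quick fixes: (i) first reduce to $x \in R$ by precomposing with the Lipschitz time-shift $g_{-s}$ (as the paper does, at the cost of working in a slightly larger flow box), after which $s=0$ and the inequalities $f(t) \leq -t/2$ for $t>0$, $f(t) \geq -t/2$ for $t<0$ are literally correct; or (ii) carry the shift through: for $t > -s + 2LC\epsilon$ one has $f(t) \leq -(t+s)/2 < -LC\epsilon$ hence $g(t) < 0$, and symmetrically for $t < -s - 2LC\epsilon$ one has $g(t) > 0$, so the zero $-u$ of $g$ lies within $2LC\epsilon$ of $-s$, giving $|s-u| < 2LC\epsilon$ directly without ever claiming $|s|$ is small. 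With either repair your Lipschitz conclusion stands, which is more than hypothesis (2) of Theorem \ref{t.usefulversion} requires.
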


\begin{proof}
We prove that for all $x,y\in g_{(-\alpha, \alpha)}R$ there exists some $K>0$ such that 
\[ d_{G\tilde X} (\Proj_Rx, \Proj_Ry) \leq K d_{G\tilde X}(x,y)^{\frac{1}{2}}.\]

First, note that for all $|t|<2\alpha$, $g_t$ is a $e^{2\alpha}$-Lipschitz map by Lemma \ref{lem:Lipschitz}. Therefore, to prove the Proposition, it suffices to prove the case where $x\in R$, as we can pre-compose the projection in this case with the Lipschitz map $g_{t^*}$ where $g_{t^*}x\in R$.

Let $t=B_c(y(0))$. By Lemma \ref{lem:linear} for all $|s|<\tau$, $\frac{|s|}{2} \leq |B_c(y(s))-t| \leq |s|.$ Similarly, $\frac{|s|}{2} \leq |B_c(x(s))| \leq |s|$. Since $B_c(x(s))$ and $B_c(y(s))$ are both decreasing by definition of $R$, these inequalities give us that
\[ |B_c(x(s)) - B_c(y(s))| \geq t-\frac{|s|}{2} \ \mbox{ for all } s\leq \tau.\] 
Since $B_c$ is a 1-Lipschitz function on $\tilde X$, 
\[ d_{\tilde X}(x(s), y(s)) \geq  t-\frac{|s|}{2} \ \mbox{ for all } s\leq \tau. \]
Then we can compute
\[ d_{G\tilde X}(x,y) \geq \int_{-2t}^{2t} \left(t-\frac{|s|}{2}\right) e^{-2|s|} ds = \frac{1}{4}(-1+e^{-4t}+4t) \geq c t^2\] 
for a properly chosen $c>0$ since $|t|<\alpha$. By Lemma \ref{lem:linear} and the fact that the geodesic flow moves at speed one for $d_{G\tilde X}$, $d_{G\tilde X}(y,\Proj_Ry) \leq 2|t|$. Using  $d_{G\tilde X}(x, \Proj_Ry)  \leq d_{G\tilde X}(x,y) + d_{G\tilde X}(y,\Proj_Ry)$, if there exists some $L>0$ such that $d_{G\tilde X}(y,\Proj_Ry) \leq Ld_{G\tilde X}(x,y)^{1/2}$, the Lemma is proved. But we have shown above that $d_{G\tilde X}(x,y) \geq ct^2$ and $d_{G\tilde X} (y, \Proj_Ry)\leq 2|t|$.
\end{proof}

%

\subsection{A pre-Markov proper family of good rectangles}\label{sec:good markov}

To complete our argument, it suffices to check that a pre-Markov proper family $(\RRR, \SSS)$ can be found where the family of sections $\SSS$ consists of good geometric rectangles, perhaps flowed by a small time. Applying the results of the previous section, this will show that $(\RRR, \SSS)$ has properties (1) and  (2) of Theorem \ref{t.usefulversion}.

\begin{prop}\label{prop:good mpf}
Let $X$ be a convex cocompact locally $\CAT(-\kappa)$ space, and let $(g_t)$ be its geodesic flow (on $GX$ when $X$ is compact, and on $\hat GX$ otherwise). For any sufficiently small $\alpha>0$, there exists a pre-Markov proper family  $(\BBB, \DDD)$ for the flow at scale $\alpha$ such that each $D_i$ has the form $g_{s_i}R_i$ for some $s_i$ with $|s_i| << \alpha$ and some good geometric rectangle $R_i$.
\end{prop}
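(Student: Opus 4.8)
The plan is to construct the pre-Markov proper family directly by a covering argument, using good geometric rectangles as the building blocks for the sections, and then to enlarge each cross-section slightly to get the nested pair $(B_i, D_i)$. First I would fix the target scale $\alpha > 0$ small; all geometric quantities below (the parameter $\tau$, the radii of the balls $B_1, B_2$, the diameters of rectangles) will be chosen in terms of $\alpha$. For each geodesic $c \in GX$ (resp. $\hat GX$) and the fixed parameter $\tau \gg 1$, Definition \ref{def:good rectangle} gives a good rectangle $R(c,\tau)$; by Lemma \ref{l.distance} and Lemma \ref{lem:X GX bound} its $d_{GX}$-diameter is controlled, and by shrinking the defining open sets $U^\pm$ (which is allowed, since the construction works for any $V^- \times V^+ \subset \partial(c,\tau)$) we can ensure each rectangle has diameter less than $\alpha/10$, say. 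The sets $g_{(-\alpha/2,0)}(\Int_\phi R(c,\tau))$ form an open cover of the compact space $Y$, so we extract a finite subcover indexed by geodesics $c_1, \dots, c_m$. This immediately gives property (2) of Definition \ref{defn:proper} (in the strengthened open form) for the collection $\{R(c_k,\tau)\}$.

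The main work is arranging the disjointness conditions (3) and the pre-Markov containment \eqref{premarkov} while keeping the sections geometric. Here the key tool is Lemma \ref{lem:deform tool}: if two rectangular subsets of good rectangles meet and $R_1$ has diameter $\epsilon$, then $g_t R_1 \cap R_2 = \emptyset$ for $|t| > 2L\epsilon$. Since our rectangles have diameter $\ll \alpha$, this says that after a rectangle is hit by another along the flow, it stays clear for a flow-time window that is large compared to its diameter but small compared to $\alpha$. I would use a standard perturbation-of-times argument (exactly as in Bowen's construction, cf. \cite[\S7]{bowen-symbolic} and \cite{BW}): replace each $R(c_k,\tau)$ by $g_{s_k}R(c_k,\tau)$ for generic small times $|s_k| \ll \alpha$, chosen inductively so that the flowed rectangles are pairwise disjoint and, more strongly, so that for $i \neq j$ the sets $g_{[-4\alpha,4\alpha]}(g_{s_i}R_i)$ and $g_{s_j}R_j$ interact only on one side — this is possible because the "bad" set of times $s_k$ for which condition (3) fails has empty interior, by Lemma \ref{lem:deform tool} applied to the relevant pairs. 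After this genericity step, one shrinks the rectangles once more (intersecting with slightly smaller $U^\pm$'s, which preserves the property of being a good geometric rectangle up to the time-shift) to get the $B_i$, and keeps the un-shrunk $g_{s_i}R_i$ — further enlarged in the transverse directions if needed — as the $D_i$, so that $B_i \subset D_i$, $\diam D_i < \alpha$, and the $D_i$ are still pairwise disjoint with the one-sided property (3). The rectangles $B_i$ are closed rectangles in the sense of Definition \ref{def:rectangle} since, as noted after Definition \ref{def:geom rectangle}, $\Proj_T\langle c, c'\rangle$ lands back in a geometric rectangle.

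For the pre-Markov property \eqref{premarkov}: suppose $B_i \cap \phi_{[-2\alpha,2\alpha]}B_j \neq \emptyset$. Then some geodesic in $B_i$ agrees (after a flow time $|t| \le 2\alpha$) with a geodesic in $B_j$, hence they share endpoints up to the openness built into the $U^\pm$'s; using the uniform distance estimates of Lemma \ref{l.distance} together with the Lipschitz bound on the geodesic flow (Lemma \ref{lem:Lipschitz}) on the time interval $[-3\alpha,3\alpha]$, every geodesic in $B_i$ stays within $d_{GX}$-distance $\ll \alpha$ of the geodesic $c_j$ that defines $D_j = g_{s_j}R_j$, for all parameter values in a $\tau$-window far exceeding $3\alpha$; this forces its endpoints into the defining open sets $U_j^\pm$ of $R_j$ and its basepoint within a flow-time $3\alpha$ of the horocycle cutting out $R_j$, i.e. $B_i \subset \phi_{[-3\alpha,3\alpha]}D_j$. (One chooses $\tau$ large enough at the outset that this implication has room to work.) Finally, when $X$ is convex cocompact, everything above is performed inside $\hat G X$: the rectangles $R(c,\tau) \cap \hat G\tilde X$ descend to good rectangles for $\hat G X$ by the remark following Definition \ref{def:geom rectangle}, the open cover is of the compact set $\hat G X$, and the bracket $\langle\cdot,\cdot\rangle$ preserves $\hat G X$, so the rectangle property persists.

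I expect the genericity/time-perturbation step to be the main obstacle: one must verify carefully that the finitely many "forbidden" conditions on the shift times $(s_1,\dots,s_m)$ — disjointness of the $D_i$, the one-sided condition (3) over the window $[-4\alpha,4\alpha]$, and compatibility with \eqref{premarkov} — each exclude only a nowhere-dense set of parameters, so that a valid choice exists, and simultaneously that none of these perturbations destroys property (2) (which is an open condition and so survives small enough perturbations). This is exactly the point that Bowen and Pollicott leave as an exercise; carrying it out rigorously with the explicit geometric rectangles, rather than abstract sections, is what makes the argument self-contained, and Lemma \ref{lem:deform tool} is precisely the quantitative input that makes it go through.
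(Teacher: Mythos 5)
Your covering argument and the use of Lemma~\ref{lem:deform tool} to perturb flow times for pairwise disjointness of the $D_i$ match the paper's strategy, and constructing everything out of good geometric rectangles is exactly right. The genuine gap is in your treatment of the pre-Markov containment~\eqref{premarkov}. You claim that if $B_i \cap \phi_{[-2\alpha,2\alpha]}B_j \neq \emptyset$, then every geodesic in $B_i$ stays within $d_{GX}$-distance $\ll\alpha$ of the geodesic $c_j$ defining $D_j$, and hence has endpoints in $U_j^\pm$, giving $B_i\subset\phi_{[-3\alpha,3\alpha]}D_j$. But the triangle inequality only yields a bound of order $\alpha$: for $x\in B_i$, one has $d_{GX}(x,c_j)\lesssim \diam(B_i)+2\alpha+\diam(D_j)=O(\alpha)$, and since the sets $U_j^\pm$ are chosen precisely to make $\diam(R_j)$ much smaller than $\alpha$, being $O(\alpha)$-close to $c_j$ is far from enough to force the endpoints of $x$ into $U_j^\pm$. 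Increasing $\tau$ does not help — it shrinks individual rectangles, but the comparison is between $\diam(B_i)$ and $\diam(D_j)$, both of which remain of size $O(\alpha)$, and a glancing corner-contact between $B_i$ and $\phi_{[-2\alpha,2\alpha]}B_j$ certainly does not drag the whole of $B_i$ inside $\phi_{[-3\alpha,3\alpha]}D_j$.

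The underlying issue is that \eqref{premarkov} is not an automatic geometric feature of a collection of small sections; it is a combinatorial constraint that requires a refinement step. This is what the paper's Lemma~\ref{markovpf} supplies: it iterates over the $B_i$ with nonempty bad set $F(B_i;\BBB,\DDD)$, cuts each such $B_i$ into a finite cover by strictly smaller rectangles $R_k$ so that every piece $R_k$ that meets $\phi_{[-2\alpha,2\alpha]}B_j$ already lies in $\phi_{(-3\alpha,3\alpha)}D_j$ — possible because $B_i\cap\phi_{[-2\alpha,2\alpha]}B_j$ is closed in $B_i$ and contained in the open set $\phi_{(-3\alpha,3\alpha)}D_j$ — and then flows the pieces by distinct tiny times to restore disjointness, keeping the full $D_i$ (time-shifted) as the outer section. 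Shrinking each $B_i$ inside its own $D_i$, as you propose, controls the nesting $B_i\subset D_i$ but gives no traction on the cross-index containment $B_i\subset\phi_{[-3\alpha,3\alpha]}D_j$. Your proof needs this refine-and-replace step inserted between the covering construction and the final verification; the time-perturbation argument alone can produce a proper family (Definition~\ref{defn:proper}) but not the pre-Markov property of Definition~\ref{defn:pollicott}.
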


We need the following lemma.

\begin{lem} \label{markovpf}
Let $(\phi_t)$ be a Lipschitz continuous expansive flow on a compact metric space. Given a proper family $(\BBB, \DDD)$ for $(\phi_t)$ at scale $\alpha>0$ where the $B_i$ and $D_i$ are rectangles, there exists a pre-Markov proper family $(\BBB', \DDD')$ at scale $\alpha>0$ such that every $D_k'\in\mathcal{D}'$ is the image under $\phi_{s_k}$ of some $D_i\in\mathcal{D}$ where $|s_k| << \alpha$.
\end{lem}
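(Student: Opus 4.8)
The plan is to take an arbitrary proper family $(\BBB,\DDD)$ of rectangles at scale $\alpha$ and perturb the sections slightly along the flow to turn the family into a pre-Markov one, i.e., to enforce the implication \eqref{premarkov}. The key observation is that the only thing that can fail for $(\BBB,\DDD)$ is the pre-Markov condition \eqref{premarkov}: if $B_i\cap\phi_{[-2\alpha,2\alpha]}B_j\neq\emptyset$ we need $B_i\subset\phi_{[-3\alpha,3\alpha]}D_j$, and this can be arranged by first shrinking the $B_i$ inside the $D_i$ (keeping the covering property (2) of Definition~\ref{defn:proper}) and then, crucially, adjusting each $D_j$ by flowing it by a small time $s_j$ so that the flow-saturations $\phi_{[-3\alpha,3\alpha]}D_j$ become large enough to engulf any $B_i$ that comes within $\phi_{[-2\alpha,2\alpha]}$ of $B_j$. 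Since expansivity gives finitely many fixed points (which we have removed) and the flow is Lipschitz, all relevant quantities are uniformly controlled, and a compactness argument shows a single scale works for all the finitely many pairs $(i,j)$ simultaneously.

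Concretely, I would proceed in the following steps. \textbf{Step 1.} Starting from $(\BBB,\DDD)$ at scale $\alpha$, pass to a refinement at scale $\alpha/N$ for suitable large $N$: subdivide each rectangle $D_i$ into finitely many smaller rectangular subsections, and correspondingly shrink the $B_i$; this preserves conditions (1) and (3) of Definition~\ref{defn:proper}, and condition (2) is preserved because the interiors transverse to the flow still cover (using that $\bigcup\phi_{(-\alpha/N,0)}(\Int_\phi B_i)$ can be arranged to cover $Y$ by taking the subdivision fine enough and using the open version of (2)). \textbf{Step 2.} For each ordered pair $(i,j)$ with $B_i\cap\phi_{[-2\alpha,2\alpha]}B_j\neq\emptyset$, note that any $\eta\in B_i$ is within distance $\mathrm{O}(\alpha)$ of $D_j$ in the metric (using $\diam D_j<\alpha$ and the fact that $B_i$ is close to a flowed copy of $B_j\subset D_j$); since $D_j$ is a section with some flow-width $\xi_j>0$, flowing $D_j$ by an appropriate small time $s_j$ and using that $\phi_{[-3\alpha,3\alpha]}D_j$ is an open neighborhood of $D_j$ thickened by $3\alpha$ in the flow direction, conclude $B_i\subset\phi_{[-3\alpha,3\alpha]}(\phi_{s_j}D_j)$. \textbf{Step 3.} Check that these flow-perturbations $s_j$ can be chosen uniformly small (say $|s_j|\ll\alpha$) and simultaneously for all the finitely many pairs, and verify that the new family $(\BBB',\DDD')$ with $D_j'=\phi_{s_j}D_j$, $B_j'=\phi_{s_j}B_j$ (intersected appropriately with the original $B_j$ to keep the covering) remains a proper family: properties (1) and (3) survive small flow perturbations because they are open conditions, and (2) survives because the perturbation is $\ll\alpha$. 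Finally observe the $B_j'$ are closed rectangles since flowing a rectangle by $\phi_{s}$ and reprojecting preserves the rectangle property (the bracket operation is flow-equivariant by axiom (d) in \S\ref{sec:smale}).

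The main obstacle I anticipate is \textbf{Step 2 combined with Step 3}: ensuring that a single small perturbation time for $D_j$ simultaneously handles \emph{all} indices $i$ with $B_i\cap\phi_{[-2\alpha,2\alpha]}B_j\neq\emptyset$, while not destroying the disjointness condition (3) or the covering condition (2) for any pair. The tension is that flowing $D_j$ to make $\phi_{[-3\alpha,3\alpha]}D_j$ bigger could conceivably create a new violation of (3) with some $D_k$, or move $B_j$ off the part of $Y$ it was responsible for covering. Resolving this requires choosing the original scale $\alpha$ (equivalently $N$ in Step 1) small enough that there is ``room to spare'' in conditions (2) and (3) — quantitatively, that the $\phi_{(-\alpha,0)}(\Int_\phi B_i)$ cover $Y$ with overlaps, and that the gaps in condition (3) are of size comparable to $\alpha$ rather than infinitesimal — so that perturbations of size $\ll\alpha$ cannot break them. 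This is exactly the kind of bookkeeping Bowen and Pollicott leave as an exercise; I would handle it by fixing, once and for all, a Lebesgue-number-type constant for the open cover in (2) and a uniform gap constant for (3), both available by compactness of $Y$ and finiteness of the family, and then taking all perturbation times smaller than a fixed fraction of the minimum of these constants.
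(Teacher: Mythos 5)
There is a genuine gap in your Step 2, and it is the heart of the matter. You claim that because any $\eta\in B_i$ is within distance $O(\alpha)$ of $D_j$, a small flow perturbation $s_j$ of $D_j$ will make $\phi_{[-3\alpha,3\alpha]}(\phi_{s_j}D_j)$ ``engulf'' $B_i$. This does not follow: the set $\phi_{[-3\alpha,3\alpha]}D_j$ is thickened \emph{only in the flow direction}, and flowing $D_j$ by $s_j$ merely translates that window to $\phi_{[-3\alpha+s_j,\,3\alpha+s_j]}D_j$. It does not enlarge the transverse footprint of $D_j$ at all. A rectangle $B_i$ that meets $\phi_{[-2\alpha,2\alpha]}B_j$ can perfectly well stick out past the transverse boundary of $D_j$, so that part of $B_i$ simply cannot be put inside any flow-saturation of $D_j$, no matter how you choose $s_j$. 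Small ambient distance to $D_j$ does not imply membership in $\phi_I D_j$. This is exactly the failure mode the pre-Markov condition is guarding against, and no amount of flowing repairs it.

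The mechanism that actually works is the opposite of your Step 1: you should \emph{not} subdivide the $D_i$ (indeed, subdividing $D_i$ contradicts the conclusion you are trying to prove, namely that every $D_k'$ is $\phi_{s_k}$ of an \emph{original} $D_i$). Instead, keep each $D_i$ whole and subdivide only the corresponding $B_i$ into small rectangles $R_k\subset B_i$ whose transverse-interiors cover $\Int_\phi B_i$, chosen fine enough that each $R_k$ either lies entirely inside $\phi_{(-3\alpha,3\alpha)}D_j$ or is disjoint from $\phi_{[-2\alpha,2\alpha]}B_j$. This is possible by a standard topology argument: $B_i\cap\phi_{[-2\alpha,2\alpha]}B_j$ is a closed subset of $B_i$ that is automatically contained in the open set $\phi_{(-3\alpha,3\alpha)}D_j$ (since $B_j\subset D_j$ and $[-2\alpha,2\alpha]\subset(-3\alpha,3\alpha)$), so one can fit a rectangular cover into the gap. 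Each new piece $R_k$ then inherits a copy of the \emph{full} $D_i$, flowed by a distinct tiny time $s_k$ to restore disjointness. The other parts of your plan — using Lipschitz continuity and compactness to choose the perturbation times small enough that (1), (2), (3) survive, and handling the bad indices one at a time so you don't create new violations — are in the right spirit; but without the ``cut $B_i$, keep $D_i$'' move, the argument does not close.
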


It is clear from the proof below that the times $s_k$ can be made arbitrarily small in absolute value.

\begin{proof}
Let $(\mathcal{B},\mathcal{D})= \{(B_i, D_i) : i = 1, \ldots, n \}$ be a proper family at scale $\alpha$ where the $B_i$ and $D_i$ are rectangles. Recall that by definition, a proper family satisfies:  (1) $\diam (D_i)< \alpha$ and $B_i \subset D_i$ for each $i \in \{1, 2, \ldots, n\}$; (2) $\bigcup_{i=1}^n\phi_{(-\alpha, 0)} (\Int_\phi B_i)=Y$; (3) for all $i\neq j$, if $\phi_{[0, 4\alpha]}(D_i) \cap D_j \neq \emptyset$, then $\phi_{[-4\alpha, 0]}(D_i) \cap D_j = \emptyset$. 

Our strategy for constructing new proper families out of $(\BBB, \DDD)$ is to replace an element $(B_i, D_i)$ by a finite collection $\{ (\phi_{s_k}R_k, \phi_{s_k}D_i) \}_k$ where $R_k$ are rectangles with $R_k \subset B_i$ and  $\bigcup_k \Int_\phi R_k$ covers $\Int_\phi B_i$. Then $\phi_{s_k}R_k$ and $\phi_{s_k}D_i$ inherit the rectangle property from $R_k$ and $D_i$ (and are closed if $R_k$ and $D_i$ are), and by choosing all $s_k$ distinct and sufficiently small in absolute value, we can ensure that the resulting collection will still satisfy (1), (2), and (3). We give some details. 

For (1), since the flow is Lipschitz and $\diam(D_{i})<\alpha$, we can choose $\epsilon_1$ so small that $\diam(\phi_{\pm\epsilon_1}D_{i})<\alpha$. Thus, (1) will be satisfied if all $s_k$ have $|s_k| < \epsilon_1$.

For (2), since $\bigcup_k \Int_\phi R_k$ covers $\Int_\phi B_i$, then it suffices to assume that all $s_k$ are sufficiently small in absolute value.

For (3), let $\beta>0$ be the minimum value of $s$ so that there is a pair $D_j, D_k$ in our proper family with both $\phi_{[0,s]} D_j \cap D_k$ and $\phi_{[-s,0]} D_j \cap D_k$ nonempty, and observe that we must have $\beta>4\alpha$. Choosing $\epsilon_2$ smaller than $\frac{\beta-4\alpha}{2}$ and smaller than $d(D_j, D_k)$ for any $j\neq k$, condition (3) will be satisfied for $\phi_{s_j} D_j$ and $\phi_{s_k}D_k$ when $|s_j|, |s_k| < \epsilon_2$ and $j\neq k$. Also, it is clear that (3) will hold for the pair $\phi_{s_{k_1}} D$ and $\phi_{s_{k_2}}D$ when $D \in \DDD$, $|s_{k_1}|, |s_{k_2}| < \epsilon_2$ and $s_{k_1} \neq s_{k_2}$.

We now use this strategy to refine $(\BBB, \DDD)$ to ensure the pre-Markov property \eqref{premarkov} holds. For $B \in \BBB$, consider the set
\[ F(B; \mathcal{B},\mathcal{D}) = \{B_j \in \BBB : \ B \cap \phi_{[-2\alpha, 2\alpha]}B_j \neq \emptyset \mbox{ but } B \nsubseteq \phi_{(-3\alpha,3\alpha)} D_j\}.\]

The set $F(B; \mathcal{B},\mathcal{D})$ is finite and encodes the elements of the proper family for which an intersection with $B$ causes an \emph{open} version of \eqref{premarkov} to fail. Clearly if $F(B; \mathcal{B},\mathcal{D}) = \emptyset$ for all $B \in \BBB$, then the pre-Markov condition \eqref{premarkov} is satisfied. 

Let $i_1< i_2 < \cdots <i_n$ be the set of all indices so that $F(B_{i_j}; \mathcal{B},\mathcal{D}) \neq \emptyset$. 
We cover $B_{i_1}$ by a finite collection of rectangles $R_k \subset B_{i_1}$ such that
\begin{itemize}
	\item $\bigcup_k \Int_\phi R_k$ covers $\Int_\phi  B_{i_1}$, and
	\item If $R_{k} \cap \phi_{[-2\alpha, 2\alpha]}B_j \neq \emptyset$ for some $j$, then $R_{k}\subseteq \phi_{(-3\alpha,3\alpha)} D_j$.
\end{itemize}

\begin{figure}
\begin{center}
\begin{tikzpicture}[scale=.9]

\draw[thick, orange!20!white, fill=orange!20!white] (-2.8,1.42) -- (-.12,1.42) -- (-.33,1) -- (-3,1) ;
\draw[thick, orange!20!white, fill=orange!20!white] (-1.6,1.1) -- (-.75,2.8) -- (.5,2.8) -- (-.38,1.1) -- (-1.6,1.1) ;
\draw[thick, orange!20!white, fill=orange!20!white] (2.3,1.6) -- (-.6,1.6) -- (-.1,2.65) -- (2.84,2.65) ;
\draw[thick, orange!20!white, fill=orange!20!white] (-.52,1) -- (-.09,1.8) -- (2.4,1.8) -- (2,1);
\draw[thick, orange!20!white, fill=orange!20!white] (-2.8,1.35) -- (-1.4,1.35) -- (-.75,2.65) -- (-2.2,2.65) ;
\draw[thick, orange!20!white, fill=orange!20!white] (-2.22,2.57) -- (2.77,2.57) -- (3,3) -- (-2,3) ;
\draw[thick, orange!20!white, fill=orange!40!white] (-2.2,2.65) -- (-.75,2.65) -- (-1.4,1.35) -- (-1.47,1.35) -- (-.87,2.57) -- (-2.22,2.57) ;
\draw[fill=orange!40!white] (-2.8,1.42) -- (-.12,1.42) -- (-.33,1) -- (-.52,1) -- (-.47,1.1) -- (-1.6,1.1) -- (-1.47,1.35) -- (-2.8,1.35) ;
\draw[fill=orange!40!white] (2.3,1.6) -- (-.6,1.6) -- (-.14,2.57) -- (-.86,2.57) -- (-.75,2.8) -- (.5,2.8) -- (.43,2.65) -- (2.84,2.65) -- (2.77,2.57) -- (0.38,2.57) -- (-0.02,1.8) -- (2.4,1.8);
\draw[fill=orange!40!white] (-.52,1) -- (-.09,1.8) -- (-.03,1.8) -- (-.38,1.09);

\draw[thick, fill=blue!50!white] (-2.75,1.5) -- (-1.5, 1.5) -- (-1,2.5) -- (-2.25, 2.5) ;

\draw  [thick, dashed] plot [smooth cycle] coordinates { (-2,4) (-3.3,3) (-4,1) (-3,0) (2,0) (3.3,1) (4,3) (3,4) } ;

\draw [thick, fill=pink] (0,-2) -- (-.5,-3) -- (2.5,-3) -- (3,-2) -- (0,-2) ;

\draw  [thick, red, dashed] plot [smooth cycle] coordinates { (0,-1.5) (-.65,-2) (-1,-3) (-.5,-3.5) (2.5,-3.5) (3.2,-3) (3.5,-2) (3,-1.5) } ;

\draw [thick, fill=pink] (-.25, 1) -- (0,1.5) -- (2.25, 1.5) -- (2,1) -- (-.25,1) ;

\draw  [thick, blue, dashed] (-2.9,1.2) .. controls (0,.8) and (.3,3.4) .. (-2.1,2.8) ;
\draw  [thick, red, dashed] (-.75,1) .. controls (-.4,2.1) and (0,2) .. (2.5,2) ;

\draw  (0,-2) -- (0,-.12) ;
\draw [->] (0,-2) -- (0,-.5) ;
\draw [dashed] (0,-.12) -- (0,1.5) ;

\draw [thick, fill=blue!50!white, xshift=-4cm, yshift=8cm] (0,-2) -- (-.5,-3) -- (2.5,-3) -- (3,-2) -- (0,-2) ;
\draw  [thick, blue, dashed, xshift=-4cm, yshift=8cm] plot [smooth cycle] coordinates { (-0.1,-1.5) (-.75,-2) (-1.1,-3) (-.6,-3.5) (2.5,-3.5) (3.2,-3) (3.5,-2) (3,-1.5) } ;

\draw[thick, orange] (-.52,1) -- (-.09,1.8) -- (2.4,1.8);
\draw[thick, orange] (-2.8,1.35) -- (-1.4,1.35) -- (-.75,2.65) -- (-2.2,2.65) ;
\draw[thick, orange] (-2.22,2.57) -- (2.77,2.57) ;
\draw[thick, orange] (-2.8,1.42) -- (-.12,1.42) -- (-.33,1) ;
\draw[thick, orange] (-1.6,1.1) -- (-.75,2.8) -- (.5,2.8) -- (-.38,1.1) -- (-1.6,1.1) ;
\draw[thick, orange] (2.3,1.6) -- (-.6,1.6) -- (-.1,2.65) -- (2.84,2.65) ;

\draw [thick] (-2,3) -- (-3,1) -- (2,1) -- (3,3) -- (-2,3) ;

\draw[->] (-1.5,1.5) -- (-1.5, 3.5) ;
\draw (-1.5,1.5) -- (-1.5, 4.5) ;
\draw[dashed] (-1.5,4.5) -- (-1.5, 5) ;

\draw[->] (-1,2.5) -- (-1, 4.5) ;
\draw (-1,2.5) -- (-1, 4.73) ;
\draw[dashed] (-1,4.7) -- (-1, 6) ;

\node at (3,2) {$B_1$} ;
\node at (3.8,1) {$D_1$} ;

\node at (3,-2.7) {$B_j$} ;
\node at (3.9,-1.7) {$D_j$} ;

\node at (-4.55,5.7) {$B_i$} ;
\node at (-4.9,4.2) {$D_i$} ;

\end{tikzpicture}
\end{center}
\caption{Ensuring the pre-Markov condition \eqref{premarkov}. $(1,j)$ and $(1,i)$ belong to $F(B_1;\mathcal{B},\mathcal{D})$. The flow direction is vertical. The orange rectangles provide one possible choice for the $R_k$.}\label{fig:cut}
\end{figure}
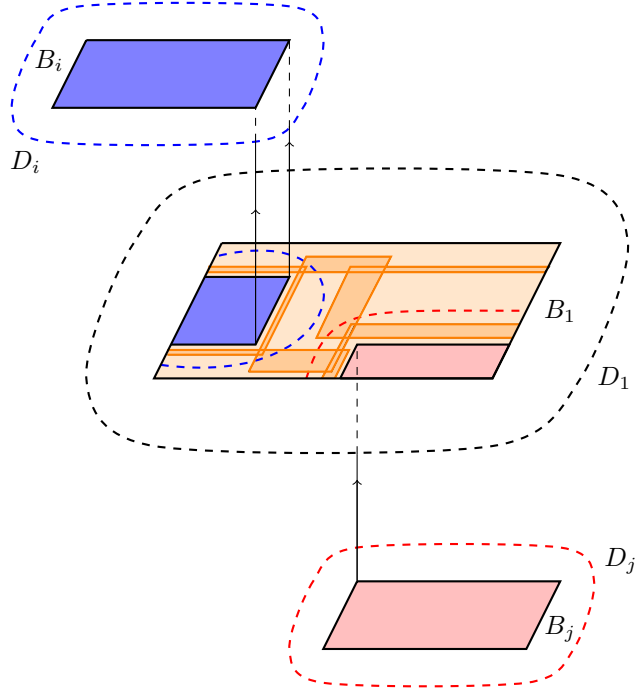

It is clearly possible to find collections of rectangles satisfying the first condition. The second can be satisfied because $B_{i_1} \cap \phi_{[-2\alpha, 2\alpha]}B_j$ is a closed subset of $B_{i_1}$ contained in the open subset $\phi_{(-3\alpha,3\alpha)} D_j$, with respect to the subspace topology on $B_{i_1}$. 
We replace $(B_{i_1}, D_{i_1})$ in $(\mathcal{B}, \mathcal{D})$ with $\{(\phi_{s_{k}}R_{k}, \phi_{s_{k}} D_{i_1})\}_k$ for distinct times  $s_{k}$ sufficiently small in absolute value as detailed above. We obtain $(\mathcal{B}^1, \mathcal{D}^1)$ with $\mathcal{B}^1$ consisting of closed rectangles satisfying conditions (1), (2), and (3). 

To establish condition \eqref{premarkov}, we must prove two things. First, we claim that for all $k$, $F(\phi_{s_{k}}R_{k}; \mathcal{B}^1, \mathcal{D}^1) = \emptyset$. This is true for the following reasons. First, if $B_j \in \BBB$ with $i_1 \neq j$, then by construction we know that $B_j \notin F(\phi_{s_{k}}R_{k}; \mathcal{B}^1,\mathcal{D}^1)$. It remains only to consider sets of the form $\phi_{s_{i}}R_{i}$ with $i \neq k$. Suppose $\phi_{s_{k}}R_{k} \cap \phi_{[-2\alpha, 2 \alpha]} \phi_{s_{i}}R_{i} \neq \emptyset$. Then it is clear that since $|s_i|, |s_k|$ are small and $R_k \subset D_{i_1}$, then  $\phi_{s_{k}}R_{k} \subset \phi_{(-3 \alpha, 3 \alpha)} \phi_{s_i} D_{i_1}$. It follows that $\phi_{s_{i}}R_{i} \notin F(\phi_{s_{k}}R_{k}; \mathcal{B}^1,\mathcal{D}^1)$. We conclude that $F(\phi_{s_{k}}R_{k}; \mathcal{B}^1, \mathcal{D}^1) = \emptyset$.
That is, we have eliminated the `bad' rectangle $B_{i_1}$ from the proper family and replaced it with a finite collection of rectangles that do not have any `bad' intersections.

Second, we claim that for all $k$ and any $j \neq i_1, i_2, \ldots, i_n$, we have that $\phi_{s_k}R_k \notin F(B_j; \mathcal{B}^1,\mathcal{D}^1)$. This is true for the following reason. Since $j \neq i_l$, $F(B_j;\mathcal{B},\mathcal{D}) = \emptyset$. Therefore $B_{i_1}\notin F(B_j; \mathcal{B},\mathcal{D})$ prior to refining and replacing $B_{i_1}$. Therefore, either $B_j \cap \phi_{[-2\alpha,2\alpha]}B_{i_1} = \emptyset$ or $B_j\subseteq \phi_{(-3\alpha,3\alpha)}D_{i_1}$. Either condition is `open,' in the sense that there is some $\epsilon_3(j)>0$ such that the condition remains true if $(B_{i_1},D_{i_1})$ is replaced by $(\phi_s B_{i_1}, \phi_s D_{i_1})$ for $|s|<\epsilon_3(j)$. Therefore, if we further demand that all $s_k$ satisfy $|s_k|<\min_j \epsilon_3(j)$, we will have that $\phi_{s_k}R_k \notin F(B_j; \mathcal{B}^1,\mathcal{D}^1)$, as desired. This implies that $F(B_j;\mathcal{B}^1,\mathcal{D}^1)=\emptyset$ for all such $j$.

From these two facts we conclude that the set of $B\in\mathcal{B}^1$ for which $F(B;\mathcal{B}^1,\mathcal{D}^1)\neq \emptyset$ is (at most) $B_{i_2}, \ldots B_{i_n}$. To  complete the proof, we carry out the `refine-and-replace' scheme finitely many times, modifying $(\mathcal{B}^1, \mathcal{D}^1)$ into $(\mathcal{B}^2, \mathcal{D}^2)$ by carrying out the procedure above on $(B_{i_2}, D_{i_2})$, etc. Finally, we modify $(B_{i_n}, D_{i_n})$ to produce a collection $(\mathcal{B}^n, \mathcal{D}^n)$ which by construction satisfies $F(B; \mathcal{B}^n,\mathcal{D}^n) = \emptyset$ for all $B \in \BBB^n$. In other words, we have eliminated every intersection which causes the pre-Markov property to fail, and this completes the proof. 
\end{proof}

For completeness, we remark on how to obtain the intermediate family of sections $\KKK$ described after Lemma \ref{PtoB}. We choose closed rectangles $K_i$ so $K_i \subset \Int_\phi B_i$. They can be chosen as close to $B_i$ as we like so that $\{\phi_{[-\alpha, 0]} (\Int_\phi K_1), \ldots, \phi_{[-\alpha, 0]} (\Int_\phi K_n)\}$ is an open cover.  Now take a Lebesgue number $12 \delta$ for this open cover. Then for any $x$, $\overline B(x, 6\delta) \subset \phi_{[-\alpha, 0]} (\Int_\phi K_i)$ for some $i$, and thus $\overline B(x, 6\delta) \subset \phi_{[-2 \alpha, 2 \alpha]}K_i$. We now prove Proposition \ref{prop:good mpf} by showing that we can ensure the sections $B_i$ are geometrically defined rectangles.

\begin{proof}[Proof of Proposition \ref{prop:good mpf}]
We show that we can construct a proper family out of good geometric rectangles for $\hat G X$.
Let $\alpha$ be small enough that all $\{g_t\}$-orbits of length $8\alpha$ remain local. Fix $\rho>0$ much smaller than $\alpha$. Fix some large $\tau$ and for each $c \in \hat G\tilde X$ pick an open good geometric rectangle $\tilde R(c,\tau)$ with diameter less than $\rho$. Then $\{ g_{(-\rho,0)} \tilde R(c, \tau)\}_{c\in G\tilde X}$ is an open cover of $\hat G\tilde X$. By compactness of $\hat GX$, we can choose a finite set $\{ \tilde H_1, \ldots , \tilde H_n\}$, writing $\tilde H_i = g_{(-\rho,0)}\tilde R_i$, so that $\hat GX$ is covered by the projections $H_i= g_{(-\rho,0)} R_i$ of $\tilde H_i$ to $GX$. We build our proper family recursively.  Let $B_1 \subset D_1$ be a closed good geometric rectangle of diameter less than $\alpha$ chosen so that $R_1 \subset  \Int_g  B_1$. Note that $H_1 \subset g_{(-\alpha,0)} \Int_g B_1$.

Now suppose that $\{ (B_j, D_j) \}_{j=1}^l$ have been chosen satisfying $\diam D_j < \alpha$, $D_j \cap D_k = \emptyset$ for $j\neq k$, and so that each $D_j$ has the form $g_{s_j}R_i$ for some $s_j$ with $|s_i| << \alpha$ and some good geometric rectangle $R_i$. Let $H_i$ be the element of our cover of smallest index such that $H_i \nsubset \bigcup_{j=1}^l g_{[-\alpha,0]}  \Int_g  B_j$.
We want to build further $(B_j, D_j)$ covering $H_i$. Let 
\[
	M_i = \left\{ c\in R_i : g_{(0,\rho)} c \ \cap \ \big(\bigcup_{j=1}^l  \Int_g  B_j\big) = \emptyset \right\} 
		 = R_i \setminus \left( \bigcup_{j=1}^l g_{(-\rho,0)}  \Int_g  B_j \right). 
\]
$M_i$ is a closed subset of $R_i$. Pick $\epsilon << \frac{\rho}{4Ll}$, where $L$ is given by Lemma \ref{lem:X GX bound}. By passing to endpoints of its geodesics, $M_i$ can be identified with a closed subset of $U^-\times U^+$, so we can find a finite set $T_1, \ldots , T_n$ of closed rectangles with each $T_k$ identified with some $V_k^-\times V_k^+ \subset U^-\times U^+$ such that $\{ \Int_g  T_k\}_{k=1}^n$ cover $M_i$,  $T_k \cap M_i \neq \emptyset$, and $\diam T_k < \epsilon$.

By Lemma \ref{lem:deform tool}, if for some $t\in [0,\rho]$,  $g_tT_k \cap D_j \neq \emptyset$, then for $|t'-t|>2L\epsilon$, $g_{t'}T_k$ and $D_j$ are disjoint. Since $4L\epsilon << \frac{\rho}{l}$, and since there are at most $l$ of the $D_j$'s which can intersect $g_{[0,\rho]}T_k$, we can pick distinct $s_k \in (0, \rho)$ so that $g_{s_k} T_k \cap ( \bigcup _{j=1}^l D_j ) = \emptyset$.

We add the collection $\{ D_k:=g_{s_k}T^k \}_{k=1}^n$ to our collection $\{D_j\}$. Inside each new $D_k$, we choose a slightly smaller closed rectangle $B_k$ so that $\{ g_{-s_k}  \Int_g B_k \}$ cover $M_i$. It is then clear since $\rho<\alpha$ that $\bigcup_{j=1}^{l'} g_{(-\alpha,0)} \Int_g  B_j$ covers $H_i$.

We continue this way until $\hat GX$ is covered by $\{ g_{(-\alpha,0)}  \Int_g B_j \}$ and check the conditions of Definitions \ref{defn:proper} and \ref{defn:pollicott}. We have ensured that \ref{defn:proper}(2) is satisfied. Using the Lipschitz property of the flow and the fact that $\epsilon << \alpha$ we can ensure that $\diam D_j < \alpha$ for all $j$, ensuring condition \ref{defn:proper}(1).  We have also ensured \ref{defn:proper}(3) by constructing the $D_j$ disjoint and picking $\alpha$ so small that all orbit segments with length $8\alpha$ are local.  Applying Lemma \ref{markovpf} produces a pre-Markov proper family satisfying Definition \ref{defn:pollicott}. By construction, each $D_i$ in $\mathcal{D}$ is the image of a good geometric rectangle under the flow for a small time.
\end{proof}

We now complete the proof of Theorem \ref{t.main}.  The flow is a metric Anosov flow by Theorem \ref{t.smale}. The flow is H\"older by Lemma \ref{lem:Lipschitz}. We take a pre-Markov proper family for the flow for which the family of sections $\DDD$ are good geometric rectangles flowed for some short constant amount of time, as provided by Proposition \ref{prop:good mpf}. By Propositions \ref{prop:Lipschitz} and \ref{projholder} the return time map and projection map to these sections are H\"older. Thus, we have met the hypotheses of Theorem \ref{t.usefulversion} and we conclude that the geodesic flow has a strong Markov coding.

%

\section{Projective Anosov representations} \label{sec:twoexamples}

We show that the methods introduced in the previous section can be adapted to the geodesic flow $(\mathsf{U}_\rho\Gamma, (\phi_t))$ for a projective Anosov representation $\rho: \Gamma \to \mathsf{SL}_m(\mathbb{R})$, proving Theorem \ref{t.main2}. This flow is a H\"older continuous topologically transitive metric Anosov flow \cite[Proposition 5.1]{bcls}, so to meet the hypotheses of Theorem \ref{t.usefulversion} it remains to show there is a pre-Markov proper family of sections to the flow such that the return time function between any two sections is H\"older, and the projection from a flow neighborhood of a section to the section are H\"older.  We sketch the proof by showing how to set up analogues of all the objects defined in \S \ref{sec:good rectangles}. This will demonstrate that the proof in \S \ref{sec:good rectangles} applies in this setting.

Following \cite{bcls}, we define the geodesic flow for a projective Anosov representation. Let $\Gamma$ be a Gromov hyperbolic group. We write $\widetilde{\mathsf{U}_0\Gamma} = \partial_\infty \Gamma^{(2)} \times \mathbb R$, and $\mathsf{U}_0\Gamma$ for the quotient $\widetilde{\mathsf{U}_0\Gamma}/\Gamma$. The Gromov geodesic flow (see \cite{champetier} and \cite{mineyev}) can be identified with the $\mathbb R$-action on $\mathsf{U}_0\Gamma$.
\begin{defn}
A representation $\rho:\Gamma\to\mathsf{SL}_m(\mathbb{R})$ is a \emph{projective Anosov representation} if:
\begin{itemize}
	\item $\rho$ has transverse projective limit maps. That is, there exist $\rho$-equivariant, continuous maps $\xi:\partial^\infty\Gamma \to \mathbb{RP}(m)$ and $\theta:\partial^\infty\Gamma \to \mathbb{RP}(m)^*$ such that if $x\neq y$, then
	\[ \xi(x) \oplus \theta(y) = \mathbb{R}^m.\]
	Here we have identified $\mathbb{RP}(m)^*$ with the Grassmannian of $m-1$-planes in $\mathbb{R}^m$ by identifying $v\in\mathbb{RP}(m)^*$ with its kernel.

	\item	We have the following contraction property (see \S2.1 of \cite{bcls}). Let $E_\rho = \widetilde{\mathsf{U}_0\Gamma} \times \mathbb R^{m}/ \Gamma$ be the flat bundle associated to $\rho$ over the geodesic flow for the word hyperbolic group on $\mathsf{U}_0\Gamma$, and let $E_\rho = \Xi \oplus \Theta$ be the splitting induced by the transverse projective limit maps $\xi$ and $\theta$. Let $\{\tilde \psi_t\}$ be the flow on $\widetilde{\mathsf{U}_0\Gamma} \times \mathbb R^{m}$ obtained by lifting the Gromov geodesic flow on $\mathsf{U}_0\Gamma$ and acting trivially on the $\mathbb R^m$ factor. This flow descends to a flow $\{\psi_t\}$ on $E_\rho$. We ask that there exists $t_0>0$ such that for all  $Z\in \mathsf{U}_0\Gamma$, $v \in \Xi_Z\setminus\{0\}$ and $w \in \Theta_Z \setminus \{ 0\}$, we have
\[ \frac{\| \psi_{t_0} (v) \|}{\| \psi_{t_0} (w) \|} \leq \frac{1}{2} \frac{ \| v \|}{\| w \|}. \]	
\end{itemize}
\end{defn}

For $v\in (\mathbb{R}^m)^*$ and $u\in\mathbb{R}^m$, we write $\langle v | u \rangle$ for $v(u)$. We define the geodesic flow $(\mathsf{U}_\rho\Gamma,(\phi_t))$ of a projective Anosov representation, referring to \S4 of \cite{bcls} for further details. Let
\[ F_\rho = \left\{ (x,y,(u,v)) : (x,y)\in\partial^\infty\Gamma^{(2)}, \ u\in \xi(x), \ v\in \theta(y), \ \langle v|u\rangle =1\right\}/\sim\]
where $(u,v)\sim(-u,-v)$ and $\partial^\infty\Gamma^{(2)}$ denotes the set of distinct pairs of points in $\partial^\infty\Gamma$. Since $u$ determines $v$, $F_\rho$ is an $\mathbb{R}$-bundle over $\partial^\infty\Gamma^{(2)}$. The flow is given by
\[ \phi_t(x,y,(u,v)) = (x,y,(e^tu,e^{-t}v)).\]
We define $\mathsf{U}_\rho\Gamma = F_\rho/\Gamma$. The space $\mathsf{U}_\rho\Gamma$ is compact \cite[Proposition 4.1]{bcls} (even though $\Gamma$ does not need to be the fundamental group of a closed manifold). The flow $(\phi_t)$ descends to a flow on $\mathsf{U}_\rho\Gamma$. The flow $(\mathsf{U}_\rho\Gamma, (\phi_t))$ is what we call the \emph{geodesic flow of the projective Anosov representation}.  The flow is H\"older orbit equivalent to the Gromov geodesic flow on $\mathsf{U}_0\Gamma$, which motivates this terminology. In \cite[Theorem 1.10]{bcls}, it is proven that $(\mathsf{U}_\rho\Gamma, (\phi_t))$ is metric Anosov. We construct sections locally on $F_\rho$ and project the resulting sections down to $\mathsf{U}_\rho\Gamma$ that will verify the hypotheses of Theorem \ref{t.usefulversion}, and thus show that the geodesic flow has a strong Markov coding.

We can define stable and unstable foliations in the space $F_\rho$.  For a point $Z=(x_0,y_0,(u_0,v_0))\in F_\rho$, we define respectively, the strong unstable, unstable, strong stable, and stable leafs through $Z$ as follows.
\begin{align*}
 W^{uu}(Z) &=\{ (x,y_0,(u,v_0)) : x\in\partial^\infty\Gamma, x\neq y_0, u\in\xi(x), \langle v_0|u \rangle =1\}. \\
  W^u(Z) &=\{ (x,y_0,(u,v)) : x\in\partial^\infty\Gamma, x\neq y_0, u\in\xi(x), v\in\theta(y_0), \langle v|u \rangle =1\} \\
 & = \bigcup_{t\in\mathbb{R}} \phi_t(W^{uu}(Z)). \\
 W^{ss}(Z) &=\{ (x_0,y,(u_0,v)) : y\in\partial^\infty\Gamma, x_0\neq y, v\in\theta(y), \langle v|u_0 \rangle =1\}. \\
 W^s (Z) &=\{ (x_0,y,(u,v)) : y\in\partial^\infty\Gamma, x_0\neq y, u\in\xi(x_0), v\in\theta(y), \langle v|u \rangle =1\} \\
&  = \bigcup_{t\in\mathbb{R}} \phi_t(W^{ss}(Z)).
\end{align*}

Fix any Euclidean metric $|\cdot|$ on $\mathbb{R}^m$. This induces a metric on 
\[ \mathbb{RP}(m) \times \mathbb{RP}(m)^* \times ((\mathbb{R}^m \times (\mathbb{R}^m)^*)/\pm1).\]
Let $d_{F_\rho}$ be the pull-back of this metric to $F_\rho$; the transversality condition on the limit maps in the definition of Anosov projective representation ensures this is well-defined. This is called a \emph{linear metric} on $F_\rho$. There is a $\Gamma$-invariant metric $d_0$ on $F_\rho$ which is locally bi-Lipschitz to any linear metric by \cite[Lemma 5.2]{bcls}. Therefore, it is sufficient to verify the H\"older properties we want with respect to a linear metric.

We now build our sections in analogy with our construction of good geometric rectangles in the $\CAT(-1)$ setting. Fix some $Z = (x_0,y_0,(u_0,v_0)) \in F_\rho$ and choose some small, disjoint open sets $U^+$ containing $x_0$ and $U^-$ containing $y_0$. Choose $U^+$ and $U^-$ small enough that for all $x,y\in U^+\times U^-$, $\xi(x)$ and $\theta(y)$ are transversal. Since $\xi(x_0)$ and $\theta(y_0)$ are transversal and $\xi, \theta$ are continuous, this is possible. 
Let
\[ R(Z,U^+,U^-) = \{ (x,y,(u,v))\in F_\rho : x\in U^+, y\in U^-, \langle v_0 | u \rangle =1\}.\]

It is straightforward to check that $R(Z,U^+,U^-)$ is a transversal to the flow $\phi_t$ by using the definition of a linear metric to verify that all points sufficiently near to $Z$ project to $R(Z,U^+,U^-)$. It is also straightforward to check that $R(Z,U^+,U^-)$ is a rectangle using the definitions of the (strong) stable and unstable leaves. This is essentially the same as our proof of Lemma \ref{sectionproof}. We can describe $R(Z,U^+,U^-)$ as the zero set for a `Busemann function' as follows.  

\begin{lem}\label{lem:beta Lipschitz}
Fiz $Z_0=(x_0,y_0,(u_0,v_0))$. For all $(x,y,(u,v))$ define
\[ \beta_{Z_0}((x,y,(u,v))) = -\log\langle v_0 | u\rangle.\]
Then $\beta_{Z_0}$ is a locally Lipschitz function with respect to a linear metric on $F_\rho$.
\end{lem}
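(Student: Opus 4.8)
The plan is to reduce the claim to elementary facts about the logarithm and about the local structure of $F_\rho$ provided by the linear metric. Recall that $\beta_{Z_0}(x,y,(u,v)) = -\log\langle v_0 | u\rangle$, so the value of $\beta_{Z_0}$ at a point depends only on the pairing of the fixed covector $v_0$ with the first coordinate $u$ of the vector part. First I would fix a point $W_0 = (x_1, y_1, (u_1, v_1)) \in F_\rho$ in the domain where we want to establish the local Lipschitz bound, and note that since $\langle v_0 | u_1 \rangle$ need not equal $1$ for an arbitrary $W_0$ (only points of $R(Z_0, U^+, U^-)$ satisfy this), we must work on a neighborhood on which $\langle v_0 | u \rangle$ is bounded away from $0$. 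Transversality of the limit maps guarantees that $\langle v_0 | u \rangle \neq 0$ whenever $u \in \xi(x)$ with $x \neq y_0$, since $u_0$ spans $\xi(x_0)$, $v_0$ has kernel $\theta(y_0)$, and $\xi(x) \oplus \theta(y_0) = \mathbb{R}^m$ forces $v_0$ to be nonzero on $\xi(x)$; and $\langle v_0 | u \rangle$ is continuous, hence bounded away from $0$ on a suitable compact neighborhood of any such point.

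The key steps, in order, are: (i) choose a neighborhood $\mathcal{N}$ of $W_0$ in $F_\rho$ on which $|\langle v_0 | u \rangle| \geq \delta > 0$ for some $\delta$ depending on $W_0$ — this uses the transversality of the limit maps together with compactness/continuity; (ii) observe that on the region $\{s \in \mathbb{R} : |s| \geq \delta\}$ the function $s \mapsto -\log|s|$ is Lipschitz with constant $1/\delta$; (iii) observe that the map $(x,y,(u,v)) \mapsto \langle v_0 | u\rangle$ is Lipschitz with respect to a linear metric on $F_\rho$, since it is the composition of the (bi-Lipschitz) chart embedding $F_\rho \hookrightarrow \mathbb{RP}(m) \times \mathbb{RP}(m)^* \times ((\mathbb{R}^m \times (\mathbb{R}^m)^*)/\pm 1)$ defining the linear metric with the bilinear pairing $(u,v_0) \mapsto \langle v_0 | u\rangle$, which is smooth, hence Lipschitz on bounded sets — here one must be slightly careful about the $\pm$ identification, but $|\langle v_0 | u\rangle|$ is well-defined on the quotient and that is all that enters $-\log|\langle v_0|u\rangle|$ once one notes $\beta_{Z_0}$ is itself well-defined on $F_\rho$ because $\langle v_0|u\rangle > 0$ is not preserved but its absolute value is; (iv) compose the Lipschitz estimates from (ii) and (iii) to conclude $\beta_{Z_0}$ is Lipschitz on $\mathcal{N}$. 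Since $W_0$ was arbitrary, $\beta_{Z_0}$ is locally Lipschitz.

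The main obstacle, such as it is, is verifying step (i): one needs to be sure that the lower bound $|\langle v_0 | u\rangle| \geq \delta$ genuinely holds on a full neighborhood and does not degenerate. This is where the transversality hypothesis on the limit maps $\xi, \theta$ is essential — it is exactly the statement that $\xi(x) \oplus \theta(y) = \mathbb{R}^m$ for $x \neq y$, which in the present notation means $v_0$ is non-degenerate on the line $\xi(x)$ so long as $x \neq y_0$, and then continuity of $\xi$ and of the pairing upgrades this to a uniform bound on a small enough neighborhood. Everything else is routine: the logarithm is smooth away from $0$, the pairing is bilinear hence smooth, and the linear metric is by definition the pullback of a metric under which these coordinate projections are Lipschitz. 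I would also remark that the same argument shows $\beta_{Z_0}$ plays the role of the Busemann function $B_c$ from \S\ref{sec:good rectangles}: its zero set is precisely $R(Z_0, U^+, U^-)$ (after intersecting with the appropriate endpoint constraints), and the analogues of Lemmas \ref{l.distance}, \ref{lem:linear}, \ref{lem:deform tool} can then be carried over, with $\beta_{Z_0}$ substituting for the Busemann function and the contraction property of the Anosov representation substituting for the $\CAT(-1)$ comparison estimates.
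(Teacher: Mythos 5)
Your proposal is correct and follows essentially the same route as the paper's proof: restrict to a neighborhood where $\langle v_0|u\rangle$ is bounded away from zero, note $-\log$ is Lipschitz there, note that $(x,y,(u,v))\mapsto\langle v_0|u\rangle$ is Lipschitz for a linear metric (as a projection composed with the bilinear pairing), and compose. Your extra remark about the $\pm$ identification (so really one controls $|\langle v_0|u\rangle|$, or works with a fixed local sign choice of representative near $Z_0$) is a genuine subtlety the paper glosses over, and your explanation of why transversality of $\xi,\theta$ supplies the lower bound in step (i) makes explicit something the paper treats as obvious; neither changes the substance of the argument.
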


\begin{proof}
Let $Z_1=(x_1,y_1,(u_1,v_1))$ and $Z_2=(x_2,y_2,(u_2,v_2))$ be in a small neighborhood of $Z_0$ for the linear metric. This implies that $\langle v_0,u_i\rangle$ lie in some range bounded away from zero. Over this range, the function $-\log$ is Lipschitz.

We know by the definition of a linear metric that 
\[ d_{F_\rho}(Z_1,Z_2) = |\xi(x_1)-\xi(x_2)| + |\theta(y_1)-\theta(y_2)| + |u_1-u_2| + |v_1-v_2|. \]
(In the various factors above, $|* - *|$ denotes the metrics induced on $\mathbb{RP}(m)$, $\mathbb{RP}(m)^*$, $\mathbb{R}^m$, and $(\mathbb{R}^m)^*$ by the Euclidean metric on $\mathbb{R}^m$.)
We calculate, using that $-\log$ and $\langle v_0 | \cdot\rangle$ are Lipschitz:
\begin{align}
	|\beta_{Z_0}(Z_1)-\beta_{Z_0}(Z_2)| = |-\log\langle v_0|u_2\rangle - \log\langle v_0|u_1\rangle | \nonumber 
					 & \leq K_1 |\langle v_0|u_2 \rangle - \langle v_0|u_1 \rangle| \nonumber \\
					& \leq K_2 |u_2-u_1| \nonumber \\
					& \leq K_2 d_{F_\rho}(Z_1,Z_2). \nonumber  \qedhere
\end{align}
\end{proof}

\begin{lem}
For all $Z\in R(Z_0,U^+,U^-)$, we have $\beta_{Z_0}(\phi_t Z) = -t$.
\end{lem}

\begin{proof}
This is immediate from the definition of $\beta_{Z_0}$.
\end{proof}

It is clear that $R(Z_0, U^+, U^-) = \{ (x,y,(u,v)) : x\in U^+, y\in U^-, \beta_{Z_0}(u)=0\}$ and if $\phi_{t^*}Z \in R(Z_0,U^+,U^-)$, then $\beta_{Z_0}(Z)=t^*$. We now have a simple proof of the analogue of Proposition \ref{prop:Lipschitz} we need:

\begin{prop}
The return time function between two good geometric rectangles is Lipschitz.
\end{prop}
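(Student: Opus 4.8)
The plan is to mirror the proof of Proposition \ref{prop:Lipschitz} verbatim, with the Busemann function $B_{c'}$ replaced by the function $\beta_{Z_0}$ attached to the rectangle into which we are returning, and with the linear bounds of Lemma \ref{lem:linear} replaced by the exact identity $\beta_{Z_0}(\phi_t Z) = -t$ proved above. So let $R = R(Z_0, U^+, U^-)$ and $R' = R(Z_0', U'^+, U'^-)$ be two of our sections, let $H$ be the Poincar\'e return map, and set $Y = R \cap H^{-1}(R')$; we must show the return time $r : (Y, d_{F_\rho}) \to \mathbb{R}$ is Lipschitz. (As noted after Lemma \ref{lem:beta Lipschitz}, it suffices to work with a linear metric, since $d_0$ is locally bi-Lipschitz to it.)

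First I would fix $v, w \in Y$, set $\epsilon = d_{F_\rho}(v, w)$, and consider the function $\beta_{Z_0'}$ determined by the target rectangle $R'$. Define $f(t) = \beta_{Z_0'}(\phi_{r(v)+t} v)$ and $g(t) = \beta_{Z_0'}(\phi_{r(v)+t} w)$; then $f(t) = -t$ exactly (by the lemma $\beta_{Z_0'}(\phi_s \cdot)$ decreases with unit slope along any orbit once we are near $R'$), and $r(w) - r(v)$ is the unique small $t^*$ with $g(t^*) = 0$. Next, using that the time-$t$ maps $\phi_t$ are Lipschitz on the relevant range of $t$ — here I would invoke the H\"older (indeed locally Lipschitz) regularity of the flow $\phi_t$ on $F_\rho$, giving a uniform constant $C$ with $d_{F_\rho}(\phi_s v, \phi_s w) \le C\epsilon$ for $|s|$ bounded by the return-time bound $\alpha$ — together with the local Lipschitz bound on $\beta_{Z_0'}$ from Lemma \ref{lem:beta Lipschitz}, I get $|f(t) - g(t)| \le K C \epsilon$ for some constant $K$. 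Hence for $t > 0$, $g(t) \le -t + KC\epsilon$, which is negative once $t > KC\epsilon$, and for $t < 0$, $g(t) \ge -t - KC\epsilon$, which is positive once $t < -KC\epsilon$. By the intermediate value theorem the root $t^* = r(w) - r(v)$ lies in $(-KC\epsilon, KC\epsilon)$, so $|r(w) - r(v)| \le KC\, d_{F_\rho}(v,w)$, which is the claim.

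The one point that needs a little care — and the closest thing to an obstacle — is justifying the uniform comparison $d_{F_\rho}(\phi_s v, \phi_s w) \le C\,d_{F_\rho}(v,w)$ for $s$ ranging over an interval containing all the return times. In the $\CAT(-1)$ setting this came for free from Lemma \ref{lem:Lipschitz}; here it follows from the fact that $(\phi_t)$ is a H\"older continuous flow on the compact space $F_\rho$ (indeed the explicit formula $\phi_t(x,y,(u,v)) = (x,y,(e^t u, e^{-t} v))$ shows the flow is smooth in $t$ and the time-$t$ maps are bi-Lipschitz on any compact piece, with constant controlled by $e^{|t|}$), so on the compact set $F_\rho$ with return times bounded by $\alpha$ we get a uniform $C = C(\alpha)$. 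The rest is the same IVT/convexity argument as in Proposition \ref{prop:Lipschitz}, only simpler because the analogue of Lemma \ref{lem:linear} is now an exact identity rather than a two-sided linear estimate, so no genuinely new idea is required.
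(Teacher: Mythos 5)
Your argument is correct, and you've identified the right structural replacements (Busemann $\to \beta$, Lemma 4.6 linear bounds $\to$ exact cocycle identity). But your route is more circuitous than the paper's: the cocycle identity $\beta_{Z_0}(\phi_t Z) = \beta_{Z_0}(Z) - t$ holds for \emph{all} $Z \in F_\rho$ (it follows immediately from the definitions since $\langle v_0 \mid e^t u\rangle = e^t\langle v_0 \mid u\rangle$), which gives $\beta_{Z_0'}(Z) = t^*$ whenever $\phi_{t^*}Z \in R'$. In other words, the return time $r(Z)$ from $R$ to $R'$ is \emph{identically equal} to $\beta_{Z_0'}(Z)$, so Lipschitzness of $r$ is literally Lemma 5.3 with no further work: $|r(v) - r(w)| = |\beta_{Z_0'}(v) - \beta_{Z_0'}(w)| \le K\,d_{F_\rho}(v,w)$. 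Your version re-derives this through the CAT($-1$) IVT template, which forces you to invoke the Lipschitz property of the time-$s$ maps $\phi_s$ as an extra ingredient (to compare $f$ and $g$), whereas the one-line argument avoids flowing $v,w$ forward at all. That extra ingredient is genuine and you justify it correctly, so your proof is valid; it just misses that the "approximately linear" convexity argument from Proposition 4.8 collapses to an exact identity here, making the whole scaffolding unnecessary. It is worth noticing this, since the simplification is precisely what makes the projective Anosov case cleaner than the CAT($-1$) case.
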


\begin{proof}
Suppose that $Z_1, Z_2 \in R$ and, for small $r_1, r_2$, that $\phi_{r_1}Z_1, \phi_{r_2}Z_2 \in R'=R(Z',U'^+, U'^-)$. 
Then by Lemma \ref{lem:beta Lipschitz}, we have
\[
|r_1-r_2|  = |\beta_{Z'}(Z_1) - \beta_{Z'}(Z_2)| \leq K d_{F_\rho}(Z_1, Z_2), 
\]
and we thus conclude that the return time function from $Z_1$ to $Z_2$ is Lipschitz.
\end{proof}

It is also easy to verify that the flow $(\phi_t)$ is Lipschitz. All that is left to prove is an analogue of Proposition \ref{projholder}:

\begin{lem}
For any good geometric rectangle $R$, $\Proj_R:\phi_{(-\alpha,\alpha)} R \to R$ is H\"older.
\end{lem}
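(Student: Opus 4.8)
The plan is to mimic the proof of Proposition \ref{projholder}, replacing the Busemann function $B_c$ with the function $\beta_{Z_0}$ from Lemma \ref{lem:beta Lipschitz} and using the linear metric $d_{F_\rho}$ in place of $d_{G\tilde X}$. Write $R = R(Z_0, U^+, U^-)$, and note $\beta_{Z_0}(\phi_t Z) = -t$ for $Z \in R$. As in the proof of Proposition \ref{projholder}, since the flow is Lipschitz, it suffices to bound $d_{F_\rho}(\Proj_R x, \Proj_R y)$ for $x \in R$ and $y \in \phi_{(-\alpha,\alpha)}R$; and since $\Proj_R y = \phi_t y$ where $t = \beta_{Z_0}(y)$, it further suffices to show $d_{F_\rho}(y, \Proj_R y) \leq K\, d_{F_\rho}(x,y)^{1/2}$, because $d_{F_\rho}(x, \Proj_R y) \leq d_{F_\rho}(x,y) + d_{F_\rho}(y, \Proj_R y)$ and $d_{F_\rho}(x,\Proj_R x) = 0$. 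Since $d_{F_\rho}(y, \Proj_R y) = d_{F_\rho}(y, \phi_t y)$ is comparable to $|t|$ for small $t$ (the flow moves at unit speed up to a bounded factor, or one can bound it directly from the formula $\phi_t(x,y,(u,v)) = (x,y,(e^tu, e^{-t}v))$ and $|t| < \alpha$), the whole statement reduces to the lower bound
\[
 d_{F_\rho}(x,y) \geq c\, t^2 .
\]

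To prove this lower bound I would argue as in Proposition \ref{projholder}: since $x \in R$, we have $\beta_{Z_0}(x) = 0$, while $\beta_{Z_0}(y) = t$, and since $\beta_{Z_0}$ is decreasing along orbits and the endpoints of $x$ and $y$ lie in the fixed $U^\pm$, we should obtain $|\beta_{Z_0}(\phi_s x) - \beta_{Z_0}(\phi_s y)| \geq t - (\text{something linear and small in } |s|)$ for $|s|$ up to some fixed scale. Because $\beta_{Z_0}$ is locally Lipschitz for $d_{F_\rho}$ (Lemma \ref{lem:beta Lipschitz}), this forces $d_{F_\rho}(\phi_s x, \phi_s y) \geq t - C|s|$ on an interval $|s| \lesssim t$; integrating $d_{F_\rho}(\phi_s x, \phi_s y)\, e^{-2|s|}$ over $s \in (-2t, 2t)$ — if the linear metric on $F_\rho$ is set up so that distances in $F_\rho$ are estimated by such an integral, as $d_{G\tilde X}$ was — then gives $d_{F_\rho}(x,y) \geq c t^2$ exactly as in the CAT$(-1)$ case. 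Alternatively, and more robustly, one can work directly with the explicit coordinates: if $x = (x_1, y_1, (u_1, v_1))$ and $y = (x_2, y_2, (u_2, v_2))$ with $\langle v_0 | u_1\rangle = 1$ and $\langle v_0 | u_2 \rangle = e^{-t}$, then $|u_1 - u_2| \gtrsim |\langle v_0 | u_1 - u_2\rangle| = |1 - e^{-t}| \gtrsim |t|$ for small $t$, which already gives the \emph{stronger} bound $d_{F_\rho}(x,y) \gtrsim |t|$, hence Lipschitz rather than merely $\tfrac12$-H\"older, regularity of $\Proj_R$ on $R$.

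Given this, I expect the statement actually holds with H\"older exponent $1$ (Lipschitz), and the main step is simply the observation that moving transverse to the section $R$ changes the coordinate $\langle v_0 | u\rangle$ at a definite linear rate, which is forced by $\langle v_0 | u \rangle = e^{-\beta_{Z_0}}$ together with the fact that $u$ is one of the coordinates in the linear metric. The one genuine subtlety — the analogue of the main obstacle in Proposition \ref{projholder} — is bounding $d_{F_\rho}(y, \Proj_R y)$ from above by a constant times $|t|$: one must check that the flow time needed to project $y$ back to $R$, which is exactly $\beta_{Z_0}(y)$, is comparable to the $F_\rho$-distance traveled, i.e. that $s \mapsto \phi_s$ has bounded speed and is bi-Lipschitz onto its image in $F_\rho$ on the relevant short time scale. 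This follows from Lipschitz continuity of the flow (already noted in the excerpt) together with $\beta_{Z_0}(\phi_s Z) = \beta_{Z_0}(Z) - s$, which shows $\phi_s$ cannot be constant. Having assembled these pieces, the proof is essentially identical in structure to that of Proposition \ref{projholder}, and combined with the preceding Lipschitz return-time result and Proposition \ref{prop:good mpf} (whose construction transfers verbatim using $R(Z, U^+, U^-)$ in place of good geometric rectangles), it completes the verification of the hypotheses of Theorem \ref{t.usefulversion} and hence the proof of Theorem \ref{t.main2}.
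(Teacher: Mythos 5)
Your second ("more robust") argument is correct, and in fact it proves the Lipschitz conclusion that the paper also obtains, though by a different route. The paper's proof is a one-liner based on smoothness: in explicit coordinates on $\mathbb{R}^m \times (\mathbb{R}^m)^*$, the projection $\Proj_R$ is the projection along the smooth flow $(u,v)\mapsto(e^t u, e^{-t}v)$ onto the smooth subset $\{(u,v): \langle v_0|u\rangle=1, \langle v|u\rangle=1\}$, which is transverse to the flow; smooth projections onto smooth transversals are Lipschitz on compact sets, and this suffices. Your estimate $|u_1-u_2|\gtrsim|\langle v_0 | u_1-u_2\rangle|=|1-e^{-t}|\gtrsim|t|$ together with the upper bound $d_{F_\rho}(y,\Proj_R y)\lesssim|t|$ proves the same Lipschitz regularity by hand, resting on the same underlying observation (that $\langle v_0|u\rangle$ varies at unit linear rate along the flow). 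Your first attempt — integrating $d_{F_\rho}(\phi_s x,\phi_s y)e^{-2|s|}\,ds$ in analogy with $d_{G\tilde X}$ — would not go through: the linear metric $d_{F_\rho}(Z_1,Z_2)=|\xi(x_1)-\xi(x_2)|+|\theta(y_1)-\theta(y_2)|+|u_1-u_2|+|v_1-v_2|$ is a sum of coordinate differences, not an orbit integral, so there is no analogue of the $\int(\cdot)e^{-2|s|}\,ds$ formula to estimate from below. You correctly identify this caveat ("if the linear metric ... is set up so that distances ... are estimated by such an integral") and pivot to the direct computation; just be aware that the condition you hedge on fails, so the integral route should be dropped entirely. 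Net: same theorem, same Lipschitz conclusion, the paper invokes a smooth-transversality principle where you carry out the corresponding quantitative estimate directly.
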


\begin{proof}
Since the flow is Lipschitz, we can assume $Z_1\in R$.  Assume $Z_2 \in \phi_{-t^*}R$ for some $t^* \in (-\alpha, \alpha)$, so $\Proj_R(Z_2) = \phi_{t^*}Z_2$. If $R$ is a rectangle based at $Z_0=(x_0,y_0,(u_0,v_0))$, then $(u_2, v_2)\mapsto (e^{t^*}u_2, e^{-t^*}v_2)$ is the projection along the smooth flow $(e^t, e^{-t})$ to the smooth subset of $\mathbb{R}^m\times (\mathbb{R}^m)^*$ given by $\{(u,v): \langle v_0 | u \rangle =1, \langle v|u \rangle =1\}$, which is transverse to the flow. Therefore this map is smooth, hence Lipschitz on any compact set for any linear metric, and this suffices for the proof.
\end{proof}

%

\section{Applications of strong markov coding}\label{s.applications}

There is a wealth of literature for Anosov and Axiom A flows which uses the strong Markov coding to prove strong dynamical properties of equilibrium states. We do not attempt to create an exhaustive list of these applications, but we refer the reader to the many results described in references such as Bowen-Ruelle \cite{bowen-ruelle}, Pollicott \cite{pollicott}, Denker-Philipp \cite{DP84} and Melbourne-T\"or\"ok \cite{MT04}. 

We summarize some of these applications as they apply to the non-wandering set of the geodesic flow of a convex cocompact locally CAT(-1) space $X = \tilde X/\Gamma$. The flow is topologically transitive since the action of $\Gamma$ on $(\Lambda \times \Lambda)\setminus \Delta$ is topologically transitive. Thus, the shift of finite type in the strong Markov coding is irreducible. In places in the discussion below, we need the notion of topological weak-mixing. We say that a metric Anosov flow is \emph{topologically weak-mixing} if all closed orbit periods are not integer multiples of a common constant. 

The result that there is a unique equilibrium state $\mu_\varphi$ for every H\"older potential is due to Bowen-Ruelle \cite{bowen-ruelle} for topologically transitive Axiom A flows. The method of proof was observed to extend to flows with strong Markov coding in \cite{pollicott}. It is also observed in \cite{pollicott} that if $\varphi, \psi$ are H\"older continuous functions then the map $t \to P(\varphi + t \psi)$ is analytic and $(d/dt)P(\varphi+ t \psi)|_{t=0} = \int \psi d_{\mu_\varphi}$, where $P(\cdot)$ is the topological pressure. This result is one of the key applications of thermodynamic formalism used in \cite{bcls, aS16}.

We now discuss the statistical properties listed in (1) of Corollary D. The  Almost Sure Invariance Principle (ASIP), Central Limit Theorem (CLT), and Law of the Iterated Logarithm are all properties of a measure that are preserved by the push forward $\pi^\ast$ provided by the strong Markov coding, and thus it suffices to establish them on the suspension flow. The CLT is probably the best known of these results, and goes back to Ratner \cite{ratner}. 
A convenient way to obtain these results in our setting is to apply the paper of Melbourne and T\"or\"ok \cite{MT04} which gives a relatively simple argument that the CLT lifts from an ergodic measure in the base to the corresponding measure on the suspension flow. They then carry out the more difficult proof that the ASIP lifts from an ergodic measure in the base to the flow, recovering the result of Denker and Phillip \cite{DP84}. The other properties discussed (and more, see \cite{MT04}), are a corollary of ASIP. The equilibrium state for the suspension flow is the lift of a Gibbs measure on a Markov shift. The measure in the base therefore satisfies ASIP by \cite{DP84}, so we are done. 

We now discuss the application to dynamical zeta functions, which was established in the case there is a strong Markov coding and the flow is topologically weak mixing in \cite{pollicott}.   Results on zeta functions are carried over from the suspension flow by a strong Markov coding. The assumption of topological weak mixing is not needed for the result that we stated as (2) in Corollary D. See \cite[Chapter 6]{PP}.

For item (3) of Corollary D, we can refer directly to \cite{pollicott} for the statement that if the flow has a strong Markov coding and is  topological weak-mixing, then the equilibrium state $\mu_\varphi$ is Bernoulli. The proof is given by Ratner \cite{ratner2}.

For item (4) of Corollary \ref{t.applications} , we argue as follows. Ricks proves that for a proper, geodesically complete, $\CAT(0)$ space $\tilde X$ with a properly discontinuous, cocompact action by isometries $\Gamma$, all closed geodesics have lengths in $c\mathbb{Z}$ for some $c>0$ if and only if $\tilde X$ is a tree with all edge lengths in $c\mathbb{Z}$ \cite[Thorem 4]{ricks}. It follows that $X$ is a metric graph with all edges of length $c$. In this case, the symbolic coding for the geodesic flow on $X$ is explicit: $(GX, (g_t))$ is conjugate to the suspension flow with constant roof function $c$ over the subshift of finite type defined by the adjacency matrix $A$ for the graph $X$.  Equilibrium states for the flow are products of equilibrium states in the base with Lebesgue measure in the flow direction. Since an equilibrium state for a H\"older potential on a topologically mixing shift of finite type is Bernoulli, item (4) follows immediately by taking $k\geq 1$ so that $A^k$ is aperiodic; if $k=1$, the measure on the base is Bernoulli, and if $k>1$ the measure on the base is the product of Bernoulli measure and rotation of a finite set with $k$ elements. 

\subsection*{Acknowledgements}
D.C. would like to thank Ohio State University for hosting him for several visits, during which much of this work was done. We are grateful to Richard Canary for several discussions regarding the application of our work to the results of \cite{bcls}.

%

\bibliographystyle{alpha}
\bibliography{biblio}

\end{document}